\theoremstyle{definition}
\newtheorem{defi}{Definition}[section]
\newtheorem{ex}[defi]{Example}
\newtheorem{rem}[defi]{Remark}
\theoremstyle{plain}
\newtheorem{teo}[defi]{Theorem}
\newtheorem{lem}[defi]{Lemma}
\newtheorem{pro}[defi]{Proposition}
\newcommand{\N}{\mathbb{N}}
\newcommand{\he}{\mathbb{H}}
\newcommand{\hel}{\mathfrak{h}}
\newcommand{\vp}{\varphi}
\newcommand{\R}{\mathbb{R}}
\newcommand{\s}{\mathcal{S}}
\newcommand{\D}{\mathcal{D}}
\newcommand{\pp}{\partial}
\newcommand{\V}{\mathbb{V}}
\newcommand{\W}{\mathbb{W}}
\newcommand{\curr}[1]{\llbracket{#1}\rrbracket}
\newcommand{\spa}{\operatorname{span}}
\newcommand{\ve}{\varepsilon}
\newcommand{\bwl}{\text{\Large$\wedge$}}
\newcommand{\col}{\operatorname{col}}
\newcommand{\Id}{\operatorname{Id}}
\newcommand{\se}{\subseteq}
\newcommand{\ceq}{\coloneqq}
\newcommand{\Tcurr}{\mathsf T}
\newcommand{\st}{\text{ s.t. }}
\newcommand{\uno}{\mathbbm{1}}
\renewcommand{\H}{\mathbb{H}}
\newcommand{\res} {\mathop{\hbox{\vrule height 7pt width .5pt depth 0pt \vrule height .5pt width 6pt depth 0pt}}\nolimits}
\subjclass[2020]{53C17, 26B20, 53C65, 58C35.}
\keywords{Heisenberg group, manifolds with boundary, Stokes' Theorem, Rumin's forms.}
\begin{document}
\title{Submanifolds with boundary and Stokes' Theorem in Heisenberg groups}

\author{Marco Di Marco}
\address{Dipartimento di Matematica ``T. Levi-Civita'', via Trieste 63, 35121 Padova, Italy.}
\email{marco.dimarco@phd.unipd.it}

\author{Antoine Julia}
\address{Université Paris-Saclay, CNRS, Laboratoire de mathématiques d’Orsay, 91405, Orsay, France.}
\email{antoine.julia@universite-paris-saclay.fr}

\author{Sebastiano Nicolussi Golo}
\address{Tempio e Monastero Zen Sōtō Shōbōzan Fudenji, Salsomaggiore Terme, PR, Italy.}
\email{sebastiano2.72@gmail.com}

\author{Davide Vittone}
\address{Dipartimento di Matematica ``T. Levi-Civita'', via Trieste 63, 35121 Padova, Italy.\newline
\indent School of Mathematics, Institute for Advanced Study, 1 Einstein Drive, 08540 Princeton (NJ), USA.}
\email{davide.vittone@unipd.it}

\thanks{M.~D.~M. and D.~V. are supported by University of Padova and  GNAMPA of INdAM. D.~V. is also supported by PRIN 2022PJ9EFL ``Geometric Measure Theory: Structure of Singular Measures, Regularity Theory and Applications in the Calculus of Variations''. Part of this work was written when D.V.~was a member of the Institute for Advanced Study in Princeton: he wishes to thank the Institute for the support as well as  for for the pleasant and exceptionally stimulating  atmosphere.}

\begin{abstract}
We introduce and study the notion of $C^1_\H$-regular submanifold with boundary in sub-Riemannian Heisenberg groups. As an application, we prove a version of Stokes' Theorem for $C^1_\H$-regular  submanifolds with boundary that takes into account Rumin's complex of differential forms in Heisenberg groups.
\end{abstract}

\maketitle

\section{Introduction}
The classical Stokes' Theorem states that, if $M$ is a smooth $m$-dimensional manifold with boundary, then the equality 
\[
\int_M d\omega=\int_{\pp M} \omega
\]
holds for every smooth $(m-1)$-form on $M$. The aim of the present paper is to provide a generalization of Stokes' Theorem to sub-Riemannian Heisenberg groups, as we now explain. 

Recall that sub-Riemannian Geometry deals with the study of manifolds endowed with a distinguished sub-bundle, called  horizontal bundle, of the tangent bundle. A great deal of effort was devoted in the last decades to the study of {\em intrinsic} analytic and geometric properties of sub-Riemannian manifolds: this is even more true for  {\em Heisenberg groups} which, are the simplest examples of (non-Riemannian) sub-Riemannian manifolds and show such rich features and raise so many interesting open questions that they naturally became subject of extensive investigations.  Heisenberg groups are introduced later in Section~\ref{subsec_Heisenberg}; here we only recall that the $n$-th Heisenberg group $\H^n$ is a step 2 nilpotent Lie group diffeomorphic to $\R^{2n+1}$ that is endowed with a left-invariant, bracket-generating horizontal sub-bundle $H\H^n$ of dimension $2n$, with a one-parameter family $(\delta_\lambda)_{\lambda>0}$ of group isomorphisms (called dilations), and with a left-invariant and homogeneous  distance $d$. The Hausdorff dimension of the metric space $(\H^n,d)$ is $Q\ceq 2n+2$.

In this paper we will prove that Stokes' Theorem holds for submanifolds of Heisenberg groups: before stating it, we however need to discuss the two keywords, {\em submanifolds} and {\em forms} in Heisenberg groups, that are needed for the very statement of Stokes' Theorem.  

The first ingredient is provided by  {\em $C^1_\H$-regular} (or {\em $\H$-regular})  submanifolds, i.e.,  submanifolds that are $C^1$ regular from the viewpoint of intrinsic geometry. 
These submanifolds are of a completely different nature depending on whether they have {\em low dimension} $1\leq k\leq n$, in which case they are $k$-dimensional submanifolds of class $C^1$ tangent to $H\H^n$, or  {\em low codimension} $1\leq k\leq n$, then they are (locally) defined as non-critical level sets of maps from $\H^n$ to $\R^k$ that are continuously differentiable along  horizontal directions. 
In particular, $C^1_\H$-regular submanifolds of low dimension $k$ are (contained in) Legendrian\footnote{For the standard contact structure on the Heisenberg group.} submanifolds and they have classical $C^1$ regularity; their Hausdorff dimension coincides with the topological dimension, $k$. On the contrary, $C^1_\H$-regular submanifolds of low codimension $k$ can have fractal Euclidean  dimension,  while their Hausdorff dimension is $Q-k=2n+2-k$, thus larger than the topological dimension $2n+1-k$. Extensive studies of $C^1_\H$-regular submanifolds have been carried on, see e.g.~\cite{ASCV,ArenaSerapioni,BSC2,BSC1,BV,CittiMan,Corni1,DDFO,franchi}.  We collect in Section~\ref{subsec_C1H} some basic facts about $C^1_\H$-regular submanifolds, but we suggest to the reader the beautiful paper~\cite{franchi}, where the genesis of and motivations behind the theory are masterfully illustrated. 

The second object we need is the complex of {\em Heisenberg differential forms} 
\begin{equation}\label{eq_complessoRumin}
0 \to \mathbb{R} \to \Omega_\he^0 \xrightarrow{d}\Omega_\he^1 \xrightarrow{d}\cdots \xrightarrow{d}\Omega^n_\he \xrightarrow{D}\Omega^{n+1}_\H \xrightarrow{d}\cdots \xrightarrow{d} \Omega^{2n+1}_\he \to 0
\end{equation}
introduced by M.~Rumin~\cite{RuminCR,rumin} (see also~\cite{FischerTripaldi,LerarioTripaldi,tripaldi2020rumin}). For clarity of exposition, in this introduction we will regard the space $\Omega_\he^k$ of Heisenberg $k$-forms as a subspace of classical differential $k$-forms: this is however a simplification and we postpone precise definitions to  Section~\ref{subsec_Rumin}. The operators $d$ appearing in~\eqref{eq_complessoRumin} are restriction of the standard exterior derivative, while the operator $D:\Omega^{n}_\H\to\Omega^{n+1}_\H$ is a non-trivial second-order operator that constitutes  M.~Rumin's key discovery, as it ensures that the  complex~\eqref{eq_complessoRumin} has the same cohomology of the De Rham complex. This complex arises naturally in Heisenberg groups and this paper can be considered as one of the several evidences that Rumin's complex is natural in this setting.

An Heisenberg form $\omega$ can be integrated on an {\em oriented} (see Definition~\ref{defi_orientabilecodimbassa}) $C^1_\H$-regular submanifold $S$. If $S$ is of low dimension then $\int_S\omega$  is  the standard integral (see, however, Proposition~\ref{prop_integrazionebassa}); if, on the other hand, $S$ has low codimension we set
\begin{equation}\label{eq_defintegration}
\int_S\omega \ceq \frac{1}{C_{n,k}}\int_S \langle t^\he_S|\omega   \rangle d\s^{Q-k},
\end{equation}
where $ t^\he_S$ denotes the Heisenberg  unit tangent vector orienting $S$, $\s^{Q-k}$ is the spherical Hausdorff measure of dimension $Q-k$, and $C_{n,k}$ is a geometric constant with the property that $\int_S\omega$ coincides with the standard integral when $S$ is Euclidean smooth, see Lemma~\ref{eqcurr}. The operator $\omega\mapsto\int_S\omega$ is a {\em Heisenberg current}, which we denote by $\curr S$. The theory of currents in Heisenberg groups (introduced in Section~\ref{subsec_correnti}) is only at a very early stage~\cite{CanarecciCurrents,franchi,JNGVIMRN,vittone}, but the formalism is quite convenient for our purposes and, will be extensively used later in the paper.
It is worth noticing that the value of $\int_S\omega$ does not depend on the particular choice of the distance $d$, see Lemma~\ref{lem_stessovalore}: in particular, the definition in~\eqref{eq_defintegration} seems to us natural and substantial -- in a word, {\em geometric}.

The first contribution of the present paper is the study of $C^1_\H$-regular submanifolds with boundary in Heisenberg groups, see also \cite[Section 5]{pansu}. One could be tempted to consider as such those $C^1_\H$-regular submanifolds $S$ whose boundary, defined as $\pp S \ceq \overline S\setminus S$, is also a $C^1_\H$-regular submanifold; however, this definition would not be appropriate. To explain why, we use the following example in Euclidean geometry (similar examples could be produced in Heisenberg groups): the surface $S=\{(x,y,x\sin\frac 1x):x>0,y\in\R\}$ is $C^\infty$ smooth in $\R^3$ and its boundary $\pp S=\{(0,y,0):y\in\R\}$ is also smooth; however, $S$ is not of class $C^1$ as a submanifold {\em with boundary}. The classical definition of $C^1$ manifold with boundary $S$ requires that, for every  $p\in \pp S$, there is a $C^1$ diffeomorphism $\Phi$ between a neighbourhood of $p$ and a closed half-ball such that $\Phi(p)$ is the center of the ball; using a little bit of Whitney Extension Theorem, one realizes that this is equivalent to requiring that, for every $p\in\pp S$, the manifold $S$ can be (locally) extended to a $C^1$ submanifold $S'$ whose interior contains $p$. 

This discussion motivates the following definition.

\begin{defi}\label{def_supboundintro}
Let $1 \leq m \leq 2n+1$; we say that $S \se \H^n$ is an \emph{$m$-dimensional $C^1_\he$-regular submanifold with boundary} if the following conditions hold:
\begin{enumerate}[label=(\arabic*)]
\item $ S$ is a non empty $m$-dimensional $C^1_\he$-regular submanifold;
\item $ \pp S$ is a non empty $(m-1)$-dimensional $C^1_\he$-regular submanifold;
\item for every $p \in  \pp S$ there exist a neighbourhood $U$ of $p$ and an  $m$-dimensional $C^1_\H$-regular submanifold $S' \se U$ such that $U \cap \overline{S} \se S'$ and, for every $r>0$,  $U(p,r) \cap (S'\setminus \overline{S})\neq \emptyset$.
\end{enumerate}
\end{defi}

In Definition~\ref{def_supbound} we also introduce the (local) notion of submanifolds with boundary that are $C^1_\H$-regular in an open set. Definition~\ref{def_supboundintro}  is stated to work  for any dimension and codimension; however, it is quite clear that in the low-dimensional case $C^1_\he$ regular submanifolds with boundary are exactly Euclidean $C^1$ submanifolds with boundary that are tangent to the horizontal distribution.\footnote{This implies that also the boundary, whose tangent space is contained in the tangent space to the submanifold, is tangent to the horizontal distribution.} In the case of low codimension $k=2n+1-m$ one needs to distinguish between the non-critical case $1\leq k\leq n-1$ and the critical case $k=n$: the first is somewhat easier and one can provide a useful equivalent definition in terms of level/superlevel sets, see Theorem~\ref{teo_equivsupbound}. On the contrary, the case of critical codimension $k=n$ is much more delicate, as it corresponds to the case in which the submanifold is of low codimension  but its boundary is low dimensional. This is why  in the critical case it is much more difficult to prove our main result, which we now state.

\begin{teo}
\label{thm_intro}
If $1 \leq m \leq 2n+1$, $S \subset \H^n$ is an $m$-dimensional orientable $C^1_\he$ regular submanifold with boundary and $S,\pp S$ have locally finite  measures in $\H^n$, then
\begin{equation}\label{eq_enunciatoStokes}
\int_S d_c\omega = \int_{\pp S} \omega \qquad\text{for every }\omega\in\D_\H^{m-1},
\end{equation}
where $\D^{m-1}_\H$ denotes the space of Heisenberg differential $(m-1)$-forms with compact support.
\end{teo}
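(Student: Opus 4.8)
The plan is to establish the identity in the language of Heisenberg currents, namely to show that $\pp\curr S=\curr{\pp S}$, which by definition of the boundary of a current means $\langle\curr S,d_c\omega\rangle=\langle\curr{\pp S},\omega\rangle$ for all $\omega\in\D^{m-1}_\H$. First I would use a partition of unity subordinate to a finite cover of $\operatorname{supp}\omega$ to reduce to two local models: the \emph{interior} case, where $\operatorname{supp}\omega\cap\pp S=\emptyset$ and the claim becomes $\int_S d_c\omega=0$, and the \emph{boundary} case, where $\omega$ is supported in one of the neighbourhoods $U$ provided by Definition~\ref{def_supboundintro}(3). I would then split according to $m$: the low-dimensional regime $1\le m\le n$; the non-critical low-codimension regime $n+2\le m\le 2n$ (together with the domain case $m=2n+1$); and the critical regime $m=n+1$, in which $d_c\omega=D\omega$ is Rumin's second-order operator and which I expect to be the main obstacle.

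In the low-dimensional regime $S$ is an ordinary $C^1$ submanifold with boundary tangent to the horizontal distribution, and both integrals are the classical ones (Proposition~\ref{prop_integrazionebassa}). Since $m-1\le n-1$ we have $d_c=d$; the contact ideal pulls back to zero on a Legendrian submanifold, so a representative of $d_c\omega$ restricts to $d(\omega|_S)$, and the statement reduces to the classical Stokes' Theorem for $C^1$ manifolds with boundary.

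For the low-codimension regimes I would argue by smoothing. In the non-critical case Theorem~\ref{teo_equivsupbound} describes $S$ locally as $\{f=0,\,u\ge0\}$ with $S'=\{f=0\}$ the level set of a $C^1_\H$ submersion $f:\H^n\to\R^k$ and $\pp S=\{f=0,\,u=0\}$; the domain case $m=2n+1$ is the divergence theorem for sets with $C^1_\H$-regular boundary. Mollifying $f$ (and $u$) produces Euclidean-smooth submanifolds $S_j$ with boundary $\pp S_j$ for which classical Stokes applies: here I would invoke Lemma~\ref{eqcurr} to rewrite $\int_{S_j}d_c\omega$ as a classical integral and verify that pairing $d_c\omega$ with the Heisenberg tangent $t^\he_{S_j}$ against $\s^{Q-k}$ equals the classical integral of $d\omega$, so that classical Stokes yields $\int_{S_j}d_c\omega=\int_{\pp S_j}\omega$. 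The theorem then follows by passing to the limit, the crux being the continuity $\curr{S_j}\to\curr S$ and $\curr{\pp S_j}\to\curr{\pp S}$; this rests on the convergence of the horizontal normals and tangents together with that of the spherical measures $\s^{Q-k}\res S_j$, for which the metric-independence of Lemma~\ref{lem_stessovalore} allows one to choose a convenient distance $d$.

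The main difficulty is the critical case $m=n+1$, where $S$ has low codimension $k=n$ but $\pp S$ is a low-dimensional, classically $C^1$ Legendrian submanifold, and where $d_c\omega=D\omega$ is second order. The level/superlevel reduction is unavailable, so the mismatch between the fractal-type regularity of $S$ and the classical regularity of $\pp S$ must be confronted directly. Here I would use Rumin's presentation $D\omega=d\tilde\omega$, where $\tilde\omega=\omega+\beta\wedge\theta$ is the canonical lift correcting $\omega$ by a contact term, reducing the left-hand side to $\int_S d\tilde\omega$. Approximating $S$ by Euclidean-smooth submanifolds controlled finely enough to handle the second derivatives hidden in $\beta$, one applies classical Stokes to each approximant; the technical heart is to show that the extra contact terms carried by $\tilde\omega$ reorganize, in the limit and along the Legendrian boundary where $\theta$ pulls back to zero, into exactly the classical boundary integral $\int_{\pp S}\omega$, while the second-order remainders vanish.
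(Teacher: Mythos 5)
Your overall architecture — recasting the statement as $\pp_c\curr S=\curr{\pp S}$, localizing by a partition of unity into interior and boundary pieces, splitting into the low-dimensional, non-critical low-codimensional, and critical regimes, and proving the two non-critical regimes by classical Stokes plus smooth approximation and passage to the limit — is exactly the paper's strategy (Theorem~\ref{fin1}, via Lemmata~\ref{approx} and~\ref{approx2}). The genuine gap is in the critical case $m=n+1$. You correctly write $D\omega=d\tilde\omega$ with $\tilde\omega=\omega-\theta\wedge L^{-1}((d\omega)_{\hel_1})$ and correctly observe that the contact correction should disappear along a Legendrian boundary; but your plan never says how to produce smooth approximants $S_h$ whose boundaries $\pp S_h$ are actually tangent to the horizontal distribution. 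That is the real obstruction, not the ``second derivatives hidden in $\beta$'': since the horizontal distribution is totally non-integrable, Frobenius' Theorem prevents one from cutting a generic smooth approximant of $S$ along an $n$-dimensional Legendrian submanifold. If $\pp S_h$ fails to be Legendrian, the term $\int_{\pp S_h}\theta\wedge L^{-1}((d(\zeta\omega))_{\hel_1})$ need not vanish, and — crucially — it is \emph{not} controlled by the convergence $\curr{\pp S_h}\to\curr{\pp S}$ of Heisenberg currents: the form $\theta\wedge L^{-1}((d(\zeta\omega))_{\hel_1})$ lies in the ideal $\mathcal I^n$, hence is invisible to Heisenberg $n$-currents, and its integral over a non-Legendrian surface can remain bounded away from zero no matter how close $\pp S_h$ is to $\pp S$. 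Conversely, once the boundary \emph{is} Legendrian there is no second-order issue at all: classical Stokes turns $\int_{S_h}d\tilde\omega$ into a boundary integral of $\tilde\omega$ itself, and the $\theta$-term dies pointwise.

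The paper closes this gap with Lemma~\ref{approx2_crit}, whose content is precisely the construction you are missing: $C^1$ approximants $S_h$ of $S$ with the much stronger property $U\cap\pp S_h=U\cap\pp S$ for \emph{every} $h$, obtained by gluing (via cut-offs) a mollification of the defining function far from $\pp S$ to a function built from a Whitney extension adapted to $\pp S$ near the boundary (Lemma~\ref{ghat}). With $\pp S_h=\pp S$ the boundary term in~\eqref{eq_elenco} is exact at every stage, $\int_{\pp S_h}\tilde\omega=\int_{\pp S}\omega$, because $\theta$ pulls back to zero on the fixed Legendrian boundary $\pp S$; no limiting argument is needed on the boundary side. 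To complete your proposal you would need either this construction or a quantitative substitute (approximants whose boundaries are Legendrian up to errors decaying faster than the relevant measures concentrate), and producing such a substitute is exactly what the non-integrability of the horizontal distribution makes hard.
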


As customary, the symbol $d_c$ stands for the exterior Rumin differential: $d_c=d$ if $m\neq n+1$, while in the critical case $m=n+1$ one writes $d_c=D$. 
The measures on $S,\pp S$ referred to in Theorem~\ref{thm_intro} are the Hausdorff spherical measures on $S,\pp S$ of the appropriate dimensions that are (respectively) $m,m-1$ (in the low dimensional case $m\leq n$), $m+1,m$ (in the low-codimensional and non-critical case $m\geq n+2$) or $n+2,n$ (in the critical case $m=n+1$). 
A  local version of Theorem~\ref{thm_intro} is also proved in Theorem~\ref{fin1}, from which Theorem~\ref{thm_intro} follows. Observe that Stokes' Theorem can be written in the language of currents as $\pp_c\curr S=\curr{\pp S}$, and  Theorem~\ref{fin1} is indeed stated in these terms. 

In the low-dimen\-sio\-nal case, Theorems~\ref{thm_intro} and~\ref{fin1} follows directly from the classical Stokes' Theorem. For $C^1_\H$-regular submanifolds of low codimension $1\leq k\leq n$ they can be proved via an approximation scheme, performed in Section~\ref{sec_approssimazione}, in which the submanifold $S$ and its boundary $\pp S$ are $C^1_\H$-approximated by, respectively, smooth submanifolds $S_h$ and their boundaries $\pp S_h$: the classical Stokes Theorem holds for each $S_h$ and the equality $\int_{S_h} d_c\omega = \int_{\pp S_h} \omega$ passes to the limit, as $h\to\infty$, to get~\eqref{eq_enunciatoStokes}. The approximation scheme is particularly delicate in the case of critical codimension $k=n$: the major difficulty one encounters is that, while it is fairly easy to produce smooth approximations $S_h$ of $S$, Frobenius Theorem makes it quite challenging to ``cut'' into $S_h$ an $n$-dimensional boundary $\pp S_h$ that is tangent to the horizontal distribution, as the latter is totally non-integrable. Note that this difficulty does not arise if $k=n=1$ in which case it is enough to find some approximation $S_h$  and then  ``cut'' it (thus creating the boundary $\pp S_h$) along some horizontal curve. This idea was exploited in~\cite[Theorem 5.3]{franchitris}, where  Stokes' Theorem in $\H^1$ was proved for the critical dimension, although with a different formulation. However, following a technically quite elaborate argument, in  Lemma~\ref{approx2_crit} we were able to produce an approximating sequence $S_h$ with  the extra property that $\pp S_h=\pp S$ for every $h$, which is even more than requested. We believe that the approximation results of Lemmata~\ref{approx2} and~\ref{approx2_crit} might be of use for future applications.

\section{Notation and preliminary results}
\subsection{Heisenberg groups and Heisenberg algebras}\label{subsec_Heisenberg}
\begin{defi}
For $n\geq 1$ we denote by $\he^n$ the $n$-th {\em Heisenberg group}, identified with $\R^{2n+1}$ through exponential coordinates. We denote a point $\he^n \ni p=(x,y,t)$ by $x,y \in \R^{n}$ and $t \in \R$. If $p=(x,y,t),q=(x',y',t') \in \he^n$, the group operation is defined as 
\[
p \cdot q \ceq (x+x',y+y',t+t'+\tfrac{1}{2}\langle x,y' \rangle-\tfrac{1}{2}\langle x', y \rangle).
\]
If $\he^n \ni p=(x,y,t)$, its inverse is $p^{-1}=(-x,-y,-t)$ and $0=(0,0,0)\in \he^n$ is the identity of $\he^n$. 

Let $p,q \in \he^n$, we denote by $\tau_p:\he^n \to \he^n$ the left translation, i.e., $\tau_p(q) \ceq p\cdot q$. For $\lambda >0$, we denote by $\delta_\lambda:\he^n\to\he^n$ the dilations of the Heisenberg groups defined, for $p=(x,y,t)\in\he^n$, by $\delta_\lambda(x,y,t) \ceq (\lambda x,\lambda y, \lambda^2 t)$. Observe that dilations form a one-parameter family of group isomorphisms.

We denote by $Q \ceq 2n+2$ the {\em homogeneous dimension} of $\he^n$. The Lebesgue measure $\mathcal L^{2n+1}$ is the Haar measure on $\he^n\equiv\R^{2n+1}$ and it is $Q$-homogeneous with respect to dilations.
\end{defi}

\begin{defi}
We denote by $\hel^n$ (or by $\hel$ when the dimension $n$ is clear) the $(2n+1)$-dimensional Lie algebra of  left invariant vector fields in $\he^n$. The algebra $\hel$ is generated by the vector fields $X_1,...,X_n,Y_1,...,Y_n,T$ where (for $1 \leq j \leq n$)
\[
X_j \ceq \pp_{x_j}-\frac{y_j}{2}\pp_t,
\qquad
Y_j \ceq \pp_{y_j}+\frac{x_j}{2}\pp_t,
\qquad
T \ceq \pp_t.
\]
We denote by $\hel_1$ the horizontal subspace of $\hel$, i.e.,
\[
\hel_1 \ceq \operatorname{span}(X_1,...,X_n,Y_1,...,Y_n),
\]
and by $\hel_2$ the linear span of $T$; the Lie algebra $\hel$ admits the 2-step stratification $\hel=\hel_1 \oplus \hel_2$. 

If $p \in \he^n$ we also denote by $H_p\he^n \ceq \operatorname{span}(X_1(p),...,X_n(p),Y_1(p),...,Y_n(p))$ and by $H \he^n \ceq \bigcup_{p \in \he^n}H_p\he^n$ the horizontal fiber bundle. We denote by $\langle \cdot, \cdot \rangle$ the inner product on $\hel$ that makes the basis $X_1,..,X_n,Y_1,...,Y_n,T$ orthonormal and if $p \in \he^n$ we denote by $\langle \cdot, \cdot \rangle_p$ the corresponding inner product on $H_p \he^n$ that makes the basis $X_1(p),...,X_n(p),Y_1(p),...,Y_n(p)$ orthonormal. We will use $|\cdot|$ and $|\cdot|_p$ to denote the corresponding norms.
\end{defi}

\begin{defi}\label{def_multialgebre}
We define the $k$-th exterior algebra of $k$-vectors and $k$-covectors as, respectively,
\begin{align*}
&\bwl_k \hel \ceq \operatorname{span}\lbrace W_{i_{1}}\wedge \cdots \wedge W_{i_k} \mbox{ with }1 \leq i_1 \leq \cdots \leq i_k \leq 2n+1 \rbrace\\
&\bwl^k \hel \ceq \operatorname{span}\lbrace \theta_{i_{1}}\wedge \cdots \wedge \theta_{i_k} \mbox{ with }1 \leq i_1 \leq \cdots \leq i_k \leq 2n+1 \rbrace
\end{align*}
where we used the notation $W_i \ceq X_i$ if $1 \leq i \leq n$, $W_{i} \ceq Y_{i-n}$ if $n+1 \leq i \leq 2n$ and $W_{2n+1} \ceq T$. Similarly we define $\theta_i \ceq dx_i$ if $1 \leq i \leq n$, $\theta_{i} \ceq dy_{i-n}$ if $n+1 \leq i \leq 2n$ and $\theta_{2n+1}\ceq \theta \ceq dt+\frac{1}{2}\sum_{j=1}^n(y_jdx_j-x_jdy_j)$; observe that $\lbrace \theta_i,...,\theta_{2n},\theta \rbrace$ is the dual basis to $\lbrace X_1,...,Y_n,T \rbrace$. In the same fashion we define the $k$-th exterior algebra of horizontal $k$-vectors and horizontal $k$-covectors as 
\begin{align*}
&\bwl_k \hel_1 \ceq \operatorname{span}\lbrace W_{i_{1}}\wedge \cdots \wedge W_{i_k} \mbox{ with }1 \leq i_1 \leq \cdots \leq i_k \leq 2n\rbrace
\\
&\bwl^k \hel_1 \ceq \operatorname{span}\lbrace \theta_{i_{1}}\wedge \cdots \wedge \theta_{i_k} \mbox{ with }1 \leq i_1 \leq \cdots \leq i_k \leq 2n \rbrace.
\end{align*}
The (canonical) action of a $k$-covector $\omega$ on a $k$-vector $v$ is denoted by $\langle v | \omega \rangle$. The fiber of $\bwl_k \hel$ over $p \in \he^n$ is denoted by $\bwl_{k,p}\hel$ and analogously for $\bwl_{k,p}\hel_1$. The inner product $\langle \cdot, \cdot \rangle$ defined on $\hel$ extends canonically to $\bwl_k \hel$ and $\bwl^k \hel$ (and then, by restriction, to $\bwl_k \hel_1$ and $\bwl^k \hel_1$). In the following we drop the subscripts in $\langle \cdot, \cdot \rangle_{\bwl_k \hel}$ and $| \cdot |_{\bwl_k \hel}$.
\end{defi}

\begin{defi} 
We say that a function $d:\H^n \times \H^n \to [0,+\infty)$ is a \emph{left invariant and homogeneous distance} if
\begin{enumerate}[label=(\roman*)]
\item $d(p,q)=d(r \cdot p,r \cdot q)$ for all $p,q,r \in \H^n$,
\item $d(\delta_\lambda(p),\delta_\lambda(q))=\lambda d(p,q)$ for all $p,q \in \H^n$ and $\lambda>0$.
\end{enumerate}
We define the associated norm $\|\cdot \|$ to $d$ as $\|p\|\ceq d(0,p)$ for every $p \in \H^n$. Moreover, if for every $(x,y,t),(x',y',t) \in \H^n$ such that $|(x,y)|_{\R^{2n}}=|(x',y')|_{\R^{2n}}$ we have $\|(x,y,t)\|=\|(x',y',t)\|$ we say that $d$ is a \emph{rotationally invariant} distance.

For $r>0$ and $p \in \he^n$ we define the \emph{open ball} $U(p,r)$ as
\[
U(p,r) \ceq \lbrace q \in \he^n : d(p,q)<r \rbrace.
\]
\end{defi}
In the following we denote with $d$ a fixed left invariant, homogeneous and rotationally invariant distance on $\H^n$ and with $U(\cdot,\cdot)$ the associated open balls.
\begin{ex}\label{ex_dist}
There are many examples of left invariant, homogeneous and rotationally invariant distances on $\H^n$, the most notable being the following.
\begin{enumerate}[label=(\roman*)]
\item The \emph{Carnot-Carathéodory} distance $d_{cc}$ defined for $p \in \H^n$ as
\[
d_{cc}(0,p):=\inf\left\{\|h\|_{L^1([0,1],\R^{2n})} : 
\begin{array}{l}
\text{the curve $\gamma_h:[0,1]\to\H^n$ defined by}\\
\gamma_h(0)=0,\ \dot\gamma_h=\sum_{j=1}^n(h_jX_j+h_{j+n}Y_j)(\gamma_h)\\
\text{has final point }\gamma_h(1)=p
\end{array}
\right\}.
\]
\item The \emph{infinity} distance $d_\infty$ defined for $(x,y,t) \in \H^n$ as
\[
d_\infty(0,(x,y,t)) \ceq \max \lbrace |(x,y)|_{\R^{2n}},2|t|_\R^{\frac{1}{2}}\rbrace.
\]
\item The \emph{Korányi (or Cygan-Korányi)} distance $d_K$ defined for $(x,y,t) \in \H^n$ as
\[
d_K(0,(x,y,t)) \ceq \left((|x|_{\R^n}^2+|y|_{\R^n}^2)^2+16t^2 \right)^\frac{ 1}{4}.
\]
\end{enumerate}
\end{ex}
\begin{pro}[{\cite[Proposition 1.3.15]{didonatophd}}]\label{pro_distequiv}
Let $d_1$ and $d_2$ be left invariant and homogeneous distances on $\H^n$. Then they are bi-Lipschitz equivalent, i.e., there exists $C>0$ such that for all $p,q \in \H^n$ 
\[
\frac{1}{C}d_2(p,q)\leq d_1(p,q) \leq C d_2(p,q).
\]
In particular every left invariant and homogeneous distance induces the Euclidean topology on $\H^n$.
\end{pro}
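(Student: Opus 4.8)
The plan is to reduce everything to a comparison of the associated norms and then to compare an arbitrary left invariant homogeneous norm with one fixed reference gauge, for instance the Korányi gauge $\rho(p)\ceq d_K(0,p)$, which is continuous for the Euclidean topology, $1$-homogeneous with respect to $\delta_\lambda$, and strictly positive away from $0$. By left invariance one has $d_i(p,q)=d_i(0,p^{-1}\cdot q)=\|p^{-1}\cdot q\|_i$, so the bi-Lipschitz equivalence of $d_1$ and $d_2$ is equivalent to a two–sided bound $C^{-1}\|p\|_2\le\|p\|_1\le C\|p\|_2$ for all $p$; by transitivity it is enough to show that every left invariant homogeneous norm $\|\cdot\|$ satisfies $m\,\rho(p)\le\|p\|\le M\,\rho(p)$ for suitable constants $0<m\le M$.

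First I would prove the upper bound. Since $\|\cdot\|$ and $\rho$ are both $1$-homogeneous under $\delta_\lambda$, it suffices to bound $M\ceq\sup\{\|p\|:\rho(p)=1\}$ and to show it is finite. Writing $p=(x,y,t)$ with $\rho(p)=1$, the coordinates $x,y,t$ range in a Euclidean-bounded set, and I would decompose $p$ as an ordered product of elements of the coordinate one–parameter subgroups: first $p=(x,y,0)\cdot(0,0,t)$, and then $(x,y,0)$ as a finite product of factors of the form $(x_je_j,0,0)$, $(0,y_je_j,0)$, together with finitely many purely vertical correction factors $(0,0,s)$ produced by the group law. Using subadditivity of the norm, the homogeneities $\|(x_je_j,0,0)\|=|x_j|\,\|(\operatorname{sgn}(x_j)e_j,0,0)\|$ and $\|(0,0,s)\|=|s|^{1/2}\|(0,0,\operatorname{sgn} s)\|$, and the fact that each of the finitely many ``unit'' values $\|(\pm e_j,0,0)\|,\|(0,\pm e_j,0)\|,\|(0,0,\pm1)\|$ is finite (because $d$ is a genuine finite-valued distance), every factor is bounded on the bounded coordinate range; hence $M<\infty$ and $\|p\|\le M\rho(p)$ for all $p$.

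The upper bound immediately yields continuity: since $\rho$ induces the Euclidean topology, $p\to0$ Euclideanly forces $\rho(p)\to0$ and hence $\|p\|\le M\rho(p)\to0$, so $\|\cdot\|$ is continuous at $0$; by left invariance and continuity of the group operations, the estimate $|d(p,q)-d(p',q')|\le\|p^{-1}p'\|+\|q^{-1}q'\|$ then shows that $d$ is continuous on $\H^n\times\H^n$ for the Euclidean topology. For the lower bound I would argue by compactness: the reference sphere $\Sigma\ceq\{p:\rho(p)=1\}$ is Euclidean-compact and does not contain $0$, the now-continuous function $\|\cdot\|$ is strictly positive on $\Sigma$, so $m\ceq\min_\Sigma\|\cdot\|>0$, and homogeneity gives $\|p\|\ge m\,\rho(p)$ for every $p$. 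Combining both bounds for $d_1$ and $d_2$ and using transitivity proves the bi-Lipschitz equivalence, while the comparison with $\rho$ shows that every left invariant homogeneous distance induces the Euclidean topology.

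I expect the main obstacle to be the upper bound, and more precisely the proof that $\sup_\Sigma\|\cdot\|<\infty$ \emph{without} assuming continuity a priori: this is the bootstrap step, and the only tools available are the distance axioms (finiteness, symmetry, triangle inequality) together with $\delta_\lambda$-homogeneity. The care needed there lies in controlling the non-commutativity of the group law, i.e. the vertical correction terms $(0,0,s)$ arising when one factors $(x,y,0)$ into coordinate subgroups; these are finitely many and, thanks to the anisotropic homogeneity $\|(0,0,s)\|=|s|^{1/2}\|(0,0,\pm1)\|$, stay bounded on $\Sigma$, so they do not spoil the estimate. Note that this argument uses neither rotational invariance nor any a priori regularity of $d$, consistently with the generality of the statement.
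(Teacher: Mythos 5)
Your proof is correct and complete. Note that the paper itself does not prove this proposition at all: it is quoted verbatim from the cited reference (Di Donato's thesis, Proposition 1.3.15), so there is no internal argument to compare against; your write-up is essentially the standard proof that the citation stands for. The delicate point — establishing the upper bound $\|p\|\le M\rho(p)$ \emph{before} knowing any continuity of $\|\cdot\|$, via the decomposition into coordinate one-parameter subgroups with the quadratic vertical corrections $(0,0,s)$ controlled by the anisotropic homogeneity $\|(0,0,s)\|=|s|^{1/2}\|(0,0,\pm 1)\|$, and only then bootstrapping continuity so that the compactness argument for the lower bound is legitimate — is exactly where such proofs can go wrong, and you handle it in the right order and for the right reasons, without using rotational invariance, consistently with the stated generality.
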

\begin{defi}
Let $m \geq 0$. Given any left invariant, homogeneous and rotationally invariant distance on $\H^n$ we denote by $\s^m$ the spherical Hausdorff measure on $\he^n$ defined for $E\subset \he^n$ by
\[
\s^m(E) \ceq \lim_{r\to0^+} \inf\left\{ \sum_{i\in \N} (2r_i)^m :\exists \;(p_i)_{i}\subset\he^n,\exists\;(r_i)_{i}\text{ with }0<r_i<r\text{ and }E\subset\bigcup_{i\in \N}U(p_i,r_i)  \right\}.
\]
\end{defi}

It is well known that $\s^Q$ coincides with $\mathcal L^{2n+1}$ up to a positive multiplicative constant; in particular, $Q$ is the Hausdorff dimension of $\he^n$.

\begin{defi}[Projection on the ``horizontal'' hyperplanes]
\label{proj}
Let $q=(x_1,...,x_n,y_1,...,y_n,t) \in \he^n, p \in \he^n$ be given. We set 
\[
\pi_p(q) \ceq \sum_{j=1}^n x_j X_j(p)+\sum_{j=1}^n y_j Y_j(p).
\]
For fixed $p$, the map $\pi_p$ is a projection, while for fixed $q$, the map $p \to \pi_p(q)$ is a smooth section of the horizontal bundle $H \he^n$ (it is in fact a left-invariant vector field).
\end{defi}

\begin{pro}
For any $a,b,p \in \he^n$ the following relations hold:
\begin{enumerate}
\item[(1)] $\pi_p(a \cdot b)=\pi_p(a)+\pi_p(b)$;
\item[(2)] $d(a,b)\geq C |\pi_p(a)-\pi_p(b)|_p$.
\end{enumerate}
where $C$ is a positive constant only depending on the distance $d$.
\end{pro}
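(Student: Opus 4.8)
The plan is to treat the two assertions separately: (1) is a direct consequence of the group law, while (2) reduces, via left-invariance and (1), to a coordinate estimate for the homogeneous norm.

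For (1), I would simply unfold the definitions. Writing $a=(x,y,t)$ and $b=(x',y',t')$, the group law shows that the first $2n$ (horizontal) coordinates of $a\cdot b$ are $x+x'$ and $y+y'$, the only nonlinearity being confined to the last ($t$-)coordinate, on which $\pi_p$ does not depend. Since $\pi_p(q)=\sum_j x_j X_j(p)+\sum_j y_j Y_j(p)$ is built linearly from the horizontal coordinates of $q$, the additivity $\pi_p(a\cdot b)=\pi_p(a)+\pi_p(b)$ follows at once.

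For (2), the key is to combine (1) with the identity $\pi_p(q^{-1})=-\pi_p(q)$ (immediate from $q^{-1}=(-x,-y,-t)$) in order to collapse the difference into a single projection: $\pi_p(a)-\pi_p(b)=\pi_p(a)+\pi_p(b^{-1})=\pi_p(b^{-1}\cdot a)$. Setting $c\ceq b^{-1}\cdot a=(\xi,\eta,\tau)$ and using left-invariance of $d$, one has $d(a,b)=d(b^{-1}\cdot a,0)=\|c\|$, so the claim becomes the uniform estimate $\|c\|\geq C\,|\pi_p(c)|_p$ for every $c,p\in\he^n$.

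The final step makes $|\pi_p(c)|_p$ explicit. Since $X_1(p),\dots,Y_n(p)$ are orthonormal for $\langle\cdot,\cdot\rangle_p$, we get $|\pi_p(c)|_p=|(\xi,\eta)|_{\R^{2n}}$ independently of $p$, which already yields uniformity in $p$. It then remains to bound the horizontal part of the coordinates of $c$ by the homogeneous norm $\|c\|=d(0,c)$. I would invoke Proposition~\ref{pro_distequiv} to compare $d$ with the infinity distance $d_\infty$ of Example~\ref{ex_dist}: since $d_\infty(0,c)=\max\{|(\xi,\eta)|_{\R^{2n}},\,2|\tau|^{1/2}\}\geq|(\xi,\eta)|_{\R^{2n}}$, bi-Lipschitz equivalence gives $\|c\|=d(0,c)\geq \frac1{C'}d_\infty(0,c)\geq \frac1{C'}|(\xi,\eta)|_{\R^{2n}}$ with $C'$ depending only on $d$, so that $C=1/C'$ works. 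There is no real obstacle here; the only points requiring care are recognising the homomorphism-type identity that reduces the difference to one projection, and tracing the dependence of the constant solely on $d$ (and not on $p$), both of which the above reduction makes transparent.
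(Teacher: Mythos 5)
Your proof is correct, and since the paper explicitly omits this proof (``We omit the boring proof''), your argument fills the gap in exactly the way one would expect: (1) is immediate from the group law, and (2) follows from the homomorphism identity $\pi_p(a)-\pi_p(b)=\pi_p(b^{-1}\cdot a)$, left-invariance, the observation that $|\pi_p(c)|_p=|(\xi,\eta)|_{\R^{2n}}$ is independent of $p$ by orthonormality of $X_1(p),\dots,Y_n(p)$, and the bi-Lipschitz comparison of $d$ with $d_\infty$ from Proposition~\ref{pro_distequiv}. All steps check out, and the constant indeed depends only on $d$.
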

We omit the boring proof.

\subsection{\texorpdfstring{$C^1_\he$}{C1H}-regular functions and \texorpdfstring{$C^1_\he$}{C1H}-regular submanifolds}\label{subsec_C1H}
\begin{defi}
Let $U \subset \he^n$ be an open set and $f:U \to \R^k$. We say that $f$ is {\em P-differentiable} at $p \in U$ if there is a (necessarily unique) group homomorphism $d_\he f_{p}:\he^n \to \R^k$ such that
\[
d_\he f_{p}(q) \ceq \lim_{\lambda \to 0}\frac{f(p \cdot \delta_\lambda(q))-f(p)}{\lambda}\qquad\forall\; q\in\he^n
\]
locally uniformly in $q$.

Let $U \subset \R^k$ be an open set and $f:U \to \he^n$. We say that $f$ is P-differentiable at $a \in U$ if there is a (necessarily unique) group homomorphism $d_\he f_{a}:  \R^k \to \he^n$ such that
\[
d_\he f_{a}(b) \ceq \lim_{\lambda \to 0} \delta_{1/\lambda} \left( f(a)^{-1}\cdot f(a+\lambda b) \right)\qquad\forall\; b\in\R^k
\]
locally uniformly in $b$.
\end{defi}

\begin{defi}
Let $U \subset \he^n$ be an open set and $f:U \to \R$; we say that $f$ is {\em of class $C^1_\he$} ($f \in C^1_\he(U)$) if $f$ is continuous and its {\em horizontal gradient} $\nabla_\he f \ceq (X_1f,...,X_nf,Y_1f,...,Y_nf)$ (to be understood in the sense of distributions) is represented by a $2n$-ple of continuous functions on $U$. 

We agree that, for every $p \in U$, the horizontal gradient $\nabla_\he f(p)$ is identified with the horizontal vector
\[
\nabla_\he f(p)=X_1f(p)X_1(p)+\cdots+Y_nf(p)Y_n(p)\in H_p\he^n.
\]
\end{defi}

\begin{rem}
It is well-known (see e.g.~\cite[Proposition~2.4]{jnv}) that, if $f\in C^1_\he(U)$, then $f$ is P-differentiable at every $p\in U$ and
\[
d_\he f_p(q)= \sum_{i=1}^n x_i X_if(p) + y_i Y_if(p)\qquad\forall\;q=(x,y,t)\in\he^n.  
\]
\end{rem}

\begin{defi}
\label{defsup2}
Let $1 \leq k \leq n$. A subset $S \subset \he^n$ is a $k$-dimensional (or $(2n+1-k)$-codimensional) {\em $C^1_\he$-regular} (or {\em $\he$-regular}) \emph{submanifold} 
if for any $p \in S$ there are open sets $U \subset \he^n$, $V \subset \R^k$ a and a function $f:V \to U$ such that $p \in U$, $f$ is injective, $f$ is continuously P-differentiable with $d_\he f$ injective and
\[
S \cap U =f(V).
\] 
\end{defi}

\begin{rem}\label{rem_C1HC1dimensionebassa}
By \cite[Theorem 3.5]{franchi}, a $C^1_\he$ regular submanifold $S$ of  dimension $k\leq n$ is also a Euclidean $C^1$ submanifold and $T_pS\subset H_p\he^n$ for every $p\in S$. Moreover, the Hausdorff dimension of $S$ equals the topological dimension  $k$ and the spherical Hausdorff measure $\s^k\res S$ is comparable with the Euclidean $k$-dimensional Hausdorff measure on $S$.
\end{rem}

The following lemma will be useful later.

\begin{lem}\label{lem_distlowdim}
Let $1 \leq k \leq n$ and $S \subset \he^n$ be a $k$-dimensional $C^1_\he$ regular submanifold. Then for any $p \in S$ there exist an open set $U \subset \he^n$ with $p \in U$ and a positive constant $C$ such that for any $q,q' \in U \cap S$ one has
\[
d(q,q') \leq C|\pi(q)-\pi(q')|_{\R^{2n}}
\]
where $\pi:\he^n \to \R^{2n}$ is the projection map defined by $\pi(x,y,t) \ceq (x,y)$.
\end{lem}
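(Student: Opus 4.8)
The plan is to exploit the fact that, in the low-dimensional regime, $S$ is a horizontal (Legendrian) submanifold. By Remark~\ref{rem_C1HC1dimensionebassa}, $S$ is a Euclidean $C^1$ submanifold with $T_qS\subseteq H_q\he^n$ for every $q\in S$; in particular every $C^1$ curve supported in $S$ is automatically a horizontal curve. The key elementary observation is that, for a horizontal curve $\gamma(s)=(x(s),y(s),t(s))$, the identity $\dot\gamma=\sum_{j=1}^n(\dot x_j X_j(\gamma)+\dot y_j Y_j(\gamma))$ forces the Carnot--Carath\'eodory length of $\gamma$ to equal $\int|(\dot x,\dot y)|_{\R^{2n}}\,ds$, that is, the Euclidean length of the projected curve $\pi\circ\gamma$ in $\R^{2n}$. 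Hence it suffices to produce, for $q,q'\in S$ near $p$, a curve in $S$ joining them whose $\pi$-image has Euclidean length at most $C\,|\pi(q)-\pi(q')|_{\R^{2n}}$, and then invoke the bi-Lipschitz equivalence of $d$ and $d_{cc}$ granted by Proposition~\ref{pro_distequiv}.

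To build such a curve, I first note that $d\pi$ sends $X_j(q),Y_j(q)$ to the standard basis vectors $\pp_{x_j},\pp_{y_j}$ of $\R^{2n}$, so $d\pi$ is injective on $H_q\he^n$ and therefore on $T_qS$. Thus $\pi|_S$ is a $C^1$ immersion, and on a small enough neighbourhood $U$ of $p$ it is a $C^1$ diffeomorphism of $S\cap U$ onto a $k$-dimensional Euclidean $C^1$ submanifold $\Sigma\ceq\pi(S\cap U)\subseteq\R^{2n}$. Shrinking $U$, I parametrize $\Sigma$ as a $C^1$ graph over its tangent plane $L\ceq T_{\pi(p)}\Sigma$, writing $\Sigma=\{u+g(u):u\in\Omega\}$ for some open convex $\Omega\subseteq L$ and $g\in C^1(\Omega,L^\perp)$ with $Dg(\pi(p))=0$; after a further shrinking I assume $\|Dg\|\leq 1$ on $\Omega$ and that the orthogonal projection $P\colon\R^{2n}\to L$ maps $\pi(S\cap U)$ into $\Omega$.

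Given $q,q'\in S\cap U$, let $\sigma$ be the segment in $L$ joining $P(\pi(q))$ and $P(\pi(q'))$ (contained in $\Omega$ by convexity), let $\eta(s)\ceq\sigma(s)+g(\sigma(s))$ be its lift to $\Sigma$, and set $\gamma\ceq(\pi|_{S\cap U})^{-1}\circ\eta$, a $C^1$ curve in $S$ from $q$ to $q'$ with $\pi\circ\gamma=\eta$. Since $\dot\sigma\in L$ and $Dg(\sigma)\dot\sigma\in L^\perp$ are orthogonal, $|\dot\eta|\leq\sqrt{1+\|Dg\|^2}\,|\dot\sigma|\leq\sqrt2\,|\dot\sigma|$, so the Euclidean length of $\pi\circ\gamma$ is at most $\sqrt2\,|P(\pi(q))-P(\pi(q'))|\leq\sqrt2\,|\pi(q)-\pi(q')|_{\R^{2n}}$, using that $P$ is $1$-Lipschitz. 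As $\gamma$ is horizontal, the first paragraph yields $d_{cc}(q,q')\leq\mathrm{length}_{cc}(\gamma)=\mathrm{length}_{\R^{2n}}(\pi\circ\gamma)\leq\sqrt2\,|\pi(q)-\pi(q')|_{\R^{2n}}$, and Proposition~\ref{pro_distequiv} promotes this to the asserted inequality for the fixed distance $d$.

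The only genuinely substantial ingredient is the length identity for horizontal curves; everything else is routine. The single point needing care is the simultaneous shrinking of $U$ so that $\pi|_S$ is a diffeomorphism, $\Sigma$ is a graph over $L$ with $\|Dg\|\leq1$, and $P(\pi(S\cap U))\subseteq\Omega$ with $\Omega$ convex. All of these follow from the continuity of the tangent planes of the Euclidean $C^1$ submanifolds $S$ and $\Sigma$, so I do not expect a real obstacle here.
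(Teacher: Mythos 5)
Your proof is correct, but it follows a genuinely different route from the paper's. The paper argues entirely inside the P-differentiability framework of the parametrization $f\colon V\subset\R^k\to\he^n$ of Definition~\ref{defsup2}: after reducing to $d=d_\infty$ via Proposition~\ref{pro_distequiv}, it splits $d(q,q')$ by the triangle inequality into the first-order Taylor error of $f$ plus the norm of the P-differential $d_\he f_a(b-b')$, and then bounds each component $\langle\nabla f_j(a),b-b'\rangle_{\R^k}$ by $C|f_j(b)-f_j(b')|$ using continuity of the derivatives and a bi-Lipschitz property of the parametrization; the horizontal structure enters only through the fact that the last component of $d_\he f_a$ vanishes. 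You instead exploit the Legendrian property of Remark~\ref{rem_C1HC1dimensionebassa} metrically: you connect $q,q'$ by a $C^1$ (hence horizontal) curve inside $S$ obtained by lifting a segment through the graph structure of $\pi(S\cap U)$, use that the Carnot--Carath\'eodory length of a horizontal curve equals the Euclidean length of its projection (immediate from the definition of $d_{cc}$ in Example~\ref{ex_dist}, since the control of a horizontal curve is exactly $(\dot x,\dot y)$), and then invoke Proposition~\ref{pro_distequiv} once at the end. Your approach is more geometric and yields a sharper, essentially universal constant ($\sqrt 2$, improvable to $1+\ve$, times the bi-Lipschitz constant between $d$ and $d_{cc}$, independent of $S$ and $p$), whereas the paper's computation produces a constant depending on the chosen parametrization; on the other hand, the paper's argument never needs to produce curves or a graph decomposition of $\pi(S\cap U)$, so it stays self-contained at the level of the definitions already in play.

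One point in your construction should be stated more carefully: you lift the segment via $(\pi|_{S\cap U})^{-1}\circ\eta$, which requires the curve $\eta$ to stay inside $\pi(S\cap U)$, i.e.\ you need $\pi(S\cap U)$ to be \emph{exactly} the graph $\{u+g(u):u\in\Omega\}$ over the convex set $\Omega$, not merely contained in a graph with $P(\pi(S\cap U))\subset\Omega$ (if $U$ is, say, a small ball, the lifted segment could a priori leave $\pi(S\cap U)$ and the lift would be undefined). This is easily arranged: since $\pi|_S$ is an injective $C^1$ immersion near $p$, by invariance of domain its image is relatively open in the local graph manifold, so it equals the graph over some open $\Omega_0\ni P(\pi(p))$; choosing a convex $\Omega\subset\Omega_0$ and setting $S\cap U\ceq(\pi|_S)^{-1}(\{u+g(u):u\in\Omega\})$ (which is relatively open in $S$, hence of the form $S\cap U$ for an open $U$) gives exactly the configuration your argument needs. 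Alternatively, one can lift through the inverse of $\pi$ restricted to a slightly larger relatively open piece of $S$, since the connecting curve need not remain in $U$. With this adjustment your proof is complete.
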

\begin{proof} 
By Proposition \ref{pro_distequiv} it is sufficient to prove the result when $d=d_\infty$ (as defined in Example \ref{ex_dist}). Fix $p \in S$. Let $V \se \R^k$ and $O \se \he^n$ be open sets such that $p \in O$ and $f=(f_1,...,f_{2n+1}):V \to \he^n$ be the defining function of $S$ coming from Definition \ref{defsup2}, i.e., $f(V)=O \cap S$. Let $U=U(p,r)$ where $r>0$ will be chosen later. Let $q,q' \in U \cap S$ and $a,b,b' \in V$ such that $f(a)=p$, $f(b)=q$ and $f(b')=q'$. One has
\begin{equation}\label{eq_p1}
\begin{aligned}
d(q,q')&=d(f(b),f(b'))=d((f(b))^{-1}\cdot f(b'),0) \\&\leq d(d_\he f_a (b-b'),(f(b))^{-1}\cdot f(b'))+d(d_\he f_a (b-b'),0).
\end{aligned}
\end{equation}
From \cite[Proposition~2.4]{jnv} we obtain
\[
d(d_\he f_a(b-b'),(f(b))^{-1}\cdot f(b')) \leq |b-b'|_{\R^k}
\]
and from \cite[Proposition~2.7]{franchi} one has (since the last component of $d_\he f_a$ is always 0)
\[
d(d_\he f_a (b-b'),0)=|d_\he f_a(b-b')|_{\R^{2n+1}}.
\]
Using this inequalities in \eqref{eq_p1} together with the fact that, by the injectivity of $d_\he f_a$, there exists a positive constant $C$ such that $|b-b'|_{\R^{k}}\leq C|d_\he f_a (b-b')|_{\R^{2n}}$ we get
\[
d(q,q') \leq (1+C)|d_\he f_a(b-b')|_{\R^{2n+1}}.
\]
By \cite[Proposition~2.7]{franchi} we have
\[
d_\he f_a(b-b')=\left( \begin{array}{c}
\langle \nabla f_1 (a),b-b'\rangle_{\R^k}\\
\vdots\\
\langle \nabla f_{2n} (a),b-b'\rangle_{\R^k}\\
0\\
\end{array}\right).
\]
We claim that there exists a positive constant $C$ such that for every $1 \leq j \leq 2n$ 
\[
|\langle \nabla f_j (a),b-b' \rangle_{\R^k}| \leq C|f_j(b)-f_j(b')|;
\]
this would prove the Lemma. We have (without loss of generality we can suppose $b \neq b'$)
\begin{equation}\label{eq_p2}
|\langle \nabla f_j (a),b-b' \rangle_{\R^k}|=|b-b'|_{\R^k}\left| \left\langle \nabla f_j(a),\frac{b-b'}{|b-b'|_{\R^k}}\right\rangle_{\R^k} \right|=|b-b'|_{\R^k}\left| \frac{\pp f_j}{\pp v}(a)\right|
\end{equation}
where we used $\frac{\pp f_j}{\pp v}$ to denote the derivative of $f_j$ in the direction $v=\frac{b-b'}{|b-b'|_{\R^k}}$. By definition we have
\[
\frac{\pp f_j}{\pp v}(a)=\lim_{b,b' \to a} \frac{f_j(b)-f_j(b')}{b-b'};
\]
therefore, possibly reducing $V$, we can assume that 
\[
\left|\frac{\pp f_j}{\pp v}(a)\right| \leq  1+\left| \frac{f_j(b)-f_j(b')}{b-b'}\right|\quad\forall\;b,b'\in V.
\]
If we use this information in \eqref{eq_p2} we obtain
\begin{equation}\label{eq_p3}
|\langle \nabla f_j (a),b-b' \rangle_{\R^k}| \leq  |b-b'|_{\R^k}+|f_j(b)-f_j(b')|
\end{equation}
for every $b,b' \in V$. From \cite[Theorem 3.5]{franchi} $f_j$ is $C^1$ and injective with injective (classical) differential $df_a$ so, possibly reducing $V$, $f_j$ is biLipschitz continuous, i.e., there exist  $K>0$ such that for any $b,b' \in V$ one has
\[
\frac{1}{K}|b-b'|_{\R^k}\leq |f_j(b)-f_j(b')|\leq K|b-b'|_{\R^k}.
\]
From the latter and \eqref{eq_p3} we eventually obtain
\[
|\langle \nabla f_j (a),b-b' \rangle_{\R^k}| \leq  |b-b'|_{\R^k}+|f_j(b)-f_j(b')| \leq (K+1)|f_j(b)-f_j(b')|
\]
for any $b,b' \in V $. The proof is concluded provided we choose $U=U(p,r)$ with $r>0$ small enough such that $U(p,r) \cap S \se f( V)$.
\end{proof}
\begin{defi}
\label{defsup}
Let $1 \leq k \leq n$. A subset $S \subset \he^n$ is a $k$-codimensional (or $(2n+1-k)$-dimensional) {\em $C^1_\he$ regular} (or {\em $\he$-regular}) \emph{submanifold}  if for any $p \in S$ there are an open set $U \subset \he^n$ and a function $f:U \to \R^k$ such that $p \in U$, $f=(f_1,\dots,f_k) \in C^1_\he(U,\R^k)$ and
\begin{align*}
& S \cap U =\lbrace q \in U :f(q)=0 \rbrace\\
& \text{$\nabla_\he f_1 \wedge \cdots \wedge \nabla_\he f_k \neq 0$ on $U$ (equivalently, $d_\he f$ is onto).}
\end{align*}
The {\em tangent group} $T^g_\he S(p)$ to $S$ at $p$  is the subgroup of $\he^n$ defined by
\[
T^g_\he S(p) \ceq \ker d_\he f_p.
\] 
The \emph{horizontal normal} at $p \in S$ $n^\he_S(p) \in \bwl_{k,p} \mathfrak{h}_1$ is defined by
\[
n^\he_S(p) \ceq \frac{\nabla_\he f_1(p) \wedge \cdots \wedge \nabla_\he f_k(p)}{|\nabla_\he f_1(p) \wedge \cdots \wedge \nabla_\he f_k(p)|}.
\] 
The \emph{tangent vector} $t^\he_S(p) \in \bwl_{2n+1-k,p}\mathfrak{h}$ is defined by
\[
t^\he_S(p) \ceq *n^\he_S(p)
\]
where $*$ is the Hodge operator. By \cite[Proposition 3.29]{franchi}, $t^\he_S(p)$ and $n^\he_S(p)$ are well defined (up to a sign) and they do not depend on the defining function $f$. The tangent vector is never horizontal and it can always be written in the form
\[
t_S^\he(p)=\tau_S^\he(p) \wedge T
\]
for a unique (up to a sign) $\tau_S^\he(p) \in \bwl_{2n-k,p}\hel_1$. 

Observe that  $n^\he_S(p),t_S^\he(p)$ and $\tau_S^\he(p)$ can be (locally) chosen to be continuous in $p$. Moreover, $T_\he^g S(p)=\spa t^\he_S(p)$.

We define the {\em boundary} $\pp S$ of $S$ as $\pp S \ceq \overline{S} \setminus  S$.
\end{defi}

An equivalent definition of low codimensional $C^1_\he$ regular submanifold is provided in the subsequent Lemma~\ref{fsmooth}, the proof of which requires  group convolution, which we now introduce. See also \cite[Chapter 1]{follandstein}.

\begin{defi}
Let $k$ be a positive integer, $H \in C^\infty_c(\he^n,\R)$ and $G:\he \to \R^k$. We use $G \star H$ to denote the \emph{group convolution} between $G$ and $H$, which is defined, for every $x \in \he^n$, as
\[
(G \star H)(x) \ceq \int_{\he^n}G(y^{-1}\cdot x)H(y)d\mathcal{L}^{2n+1}(y)=\int_{\he^n}G(y)H(x \cdot y^{-1})d \mathcal{L}^{2n+1}(y).
\]
Notice that $G \star H$ is a smooth map satisfying
\[
W(G \star H)=(WG)\star H
\]
for every $W \in \hel$.

Let $\ve>0$. In the following we will use $K_{\ve}$ to denote the \emph{standard mollification kernel} that is, $K_\ve \ceq \ve^{-Q}K \circ \delta_{1/\ve}$, where $K \in C^\infty_c(U(0,1))$ is a fixed non-negative function (a kernel) such that $\int_{\he^n}K d \mathcal{L}^{2n+1}=1$.
\end{defi}

We use the compact notation $\{f=0\}$ to denote the  0 level set $\{p\in D:f(p)=0\}$ of a function $f:D\to\R^k$; the domain $D$ of $f$ will always be clear from the context. Moreover, for $f\in C^1_\he(\he^n,\R^k)$ and $q \in \H^n$ we denote with $\widehat \nabla_\he f(q)$ the $k \times k$ matrix $\col[X_1f|\cdots|X_kf](q)$ and we denote by $\Id_{k \times k}$  the $k\times k$ identity matrix. The following lemma will be useful later.

\begin{lem}
\label{fsmooth} 
Let $1 \leq k \leq n$ and $S\subset \he^n$. The following statements are equivalent:
\begin{enumerate}
\item[(i)] $S$ is a $C^1_\he$ submanifold of codimension $k$;
\item[(ii)] for every $p \in S$ there exist an open set $U$ with $p \in U$, a function $f:\he^n \to \R^k$ and  $\delta_0>0$ such that, up to an isometry of $\he^n$,
\begin{enumerate}
\item[(1)] $f \in C^1_\he(\he^n,\R^k)$,
\item[(2)] $|\nabla_\he f|$ is bounded on $\he^n$,
\item[(3)] $f \in C^\infty(\he^n \setminus \lbrace f=0 \rbrace,\R^k)$,
\item[(4)] $\widehat \nabla_\he f(q)\geq  \delta_0 \Id_{k \times k}$ for every $q \in \he^n$ in the sense of quadratic forms,
\item[(5)] $S \cap U=\lbrace q \in U$ : $f(q)=0\rbrace$.
\end{enumerate}
\end{enumerate}
\end{lem}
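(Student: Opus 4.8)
The plan is to prove the two implications separately, with the substantial content lying in (i)$\Rightarrow$(ii); the reverse implication is essentially immediate since conditions (1), (4) and (5) already exhibit $S$ as a level set of a $C^1_\he$ map with surjective horizontal differential, matching Definition~\ref{defsup}.

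For (i)$\Rightarrow$(ii), fix $p\in S$ and start from a defining function $g\in C^1_\he(U_0,\R^k)$ on a neighbourhood $U_0$ of $p$ with $\nabla_\he g_1\wedge\cdots\wedge\nabla_\he g_k\neq0$. First I would normalize the geometry: since $d_\he g_p$ is onto, after composing with a linear isomorphism of $\R^k$ and applying a suitable rotation of $\he^n$ (an isometry, as permitted by the statement ``up to an isometry''), I can arrange that the $k\times k$ minor $\widehat\nabla_\he g(p)=\col[X_1g\,|\,\cdots\,|\,X_kg](p)$ equals $\Id_{k\times k}$. By continuity of the horizontal gradient there is then a smaller ball $U(p,r_0)$ on which $\widehat\nabla_\he g\geq \tfrac12\Id_{k\times k}$ and $|\nabla_\he g|$ is bounded. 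The task is now to modify $g$ \emph{outside} a neighbourhood of $\{g=0\}$ so that it becomes globally defined, globally smooth away from its zero set, and so that the lower bound (4) and the boundedness (2) hold on all of $\he^n$, while not disturbing $g$ near $S$ (so that (5) survives on a possibly smaller $U$).

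The mechanism for this is the group convolution and the standard mollifier $K_\ve$ introduced just before the lemma. The key idea is to mollify $g$ on the region where it is far from zero, where it is safe to perturb, and to patch this smoothed version to the original $g$ near $\{g=0\}$ by a cutoff; because $W(G\star H)=(WG)\star H$ for $W\in\hel$, the horizontal derivatives of the mollification converge uniformly to those of $g$ on compact subsets, so the lower bound $\widehat\nabla_\he f\geq\delta_0\Id_{k\times k}$ and the boundedness of $|\nabla_\he f|$ are preserved if $\ve$ is chosen small and the cutoff is adapted to the scale on which $\{g=0\}$ is separated from the smoothing region. Away from $\{f=0\}$ the function then agrees with a mollified (hence $C^\infty$) function, giving (3). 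One must also extend $g$, which is only defined on $U_0$, to all of $\he^n$ while keeping the gradient bounds: here I would first solve a Cauchy-type or implicit-function construction to realize $S\cap U$ as a graph in the direction transverse to the non-degenerate minor, and then extend the graph map globally in a controlled way before mollifying.

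The main obstacle I anticipate is \emph{simultaneously} achieving global smoothness away from the zero set (3) and the uniform ellipticity (4) on all of $\he^n$: mollification smooths but is only accurate on compact sets, while the lower bound on $\widehat\nabla_\he f$ must hold everywhere, including at infinity, so the global extension of $g$ must be engineered to have a built-in nondegenerate linear behaviour (e.g.\ behaving like a coordinate projection far out) that survives mollification. Reconciling this global control with the requirement $S\cap U=\{f=0\}$ on a neighbourhood of $p$ — i.e.\ ensuring the convolution does not create spurious zeros and does not move the true zero set within $U$ — is the delicate point, and it is exactly where the quantitative separation between $\{g=0\}$ and the smoothing region, together with the lower bound on $\widehat\nabla_\he g$, must be used to guarantee that $\{f=0\}$ stays confined to where $f=g$.
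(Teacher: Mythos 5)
Your implication (ii)$\Rightarrow$(i) and your normalization step (compose with an isometry of $\he^n$ and an element of $GL(\R^k)$ so that $\widehat\nabla_\he g(p)=\Id_{k\times k}$, then use continuity to get ellipticity and gradient bounds on a small ball) coincide with the paper's. The genuine gap is in your smoothing scheme. You propose to mollify $g$ only at \emph{positive} distance from $\{g=0\}$ and to keep $f=g$ on a whole neighbourhood of the zero set, patching the two by a fixed cutoff. With that construction property (3) fails: points arbitrarily close to, but off, the zero set lie in the region where $f$ coincides with the merely $C^1_\he$ function $g$, so $f$ cannot be $C^\infty$ there; this contradicts your own claim that ``away from $\{f=0\}$ the function agrees with a mollified function''. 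No fixed cutoff can repair this --- the mollification scale must degenerate as one approaches the zero set. This is precisely the paper's key device: it covers $\he^n\setminus Z_h$ (where $Z_h$ is the zero set of the globally patched function $h$) by a locally finite family $(U_j)_j$ with subordinate partition of unity $(u_j)_j$ and mollification parameters $\ve_j<d(U_j,Z_h)$, sets $f\ceq\sum_j u_j(h\star K_{\ve_j})$ off $Z_h$ and $f\ceq 0$ \emph{exactly on} $Z_h$ (not on a neighbourhood of it), and then must re-prove nontrivially that $f$ is $C^1_\he$ \emph{across} $Z_h$, via estimates of the type $|f-h|\leq d(\cdot,Z_h)$ and $|Wf-Wh|\leq C\,d(\cdot,Z_h)$ for horizontal $W$, and that $\{f=0\}=Z_h$ (hence (5)), which the paper gets from the intrinsic-graph theorem \cite[Theorem~1.4]{vittone} rather than from the quantitative separation argument you sketch.

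The second, lesser gap is the global extension. ``Realizing $S\cap U$ as a graph and extending the graph map globally'' does not produce a globally defined function satisfying (2) and (4): a graph is a set, and recovering from it a defining function with $\widehat\nabla_\he f\geq\delta_0\Id_{k\times k}$ on all of $\he^n$ is essentially the problem you started with. You correctly identify the target (a function ``behaving like a coordinate projection far out''), but the concrete mechanism in the paper is different: first extend $g$ to all of $\he^n$ by the intrinsic Whitney Extension Theorem, then interpolate with the P-differential of $g$ at $p$, setting $h\ceq\chi_r g+(1-\chi_r)\,d_\he g_p$ for a cutoff $\chi_r$ at scale $r$. The error term $\widehat\nabla_\he\chi_r\otimes(g-d_\he g_p)$ is then $o(1)$ as $r\to 0$, because $\|g-d_\he g_p\|_{C^0(U(p,2r))}=o(r)$ by P-differentiability while $|\nabla_\he\chi_r|\leq C/r$; this yields $\widehat\nabla_\he h\geq\tfrac12\Id_{k\times k}$ on all of $\he^n$, which is the global ellipticity your proposal needs but does not construct.
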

\begin{proof}
The implication $(ii)\Rightarrow(i)$ is clear. Let us prove the implication $(i)\Rightarrow(ii)$. Fix $p \in S$ and an open ball $U(p,3r)$ where $r>0$ will be chosen later. Then there exist a function $g \in C^1_\he(U(p,3r),\R^k)$ with $\nabla_\he g$ of maximal rank on $U(p,3r)$ and 
\[
S \cap U(p,3r)=\lbrace  q \in U(p,3r) :g(q)=0 \rbrace.
\]
Using the Whitney Extension Theorem (see \cite[Theorem 6.8]{franchibis} and \cite[Theorem 2.3.8]{didonatophd}), it is not restrictive to assume that $g \in C^1_\he(\he^n,\R^k) \cap C^\infty(\he^n \setminus [\lbrace g=0 \rbrace\cap U(p,3r)],\R^k)$.
By~\cite[Proposition~3.25]{franchi}, up to an isometry of $\he^n$ we can assume that $X_1g(p),\dots,X_k g(p)\in\R^k$ are linearly independent; in particular, there exists $L\in GL(\R^k)$ such that $L(X_ig(p))=e_i$, where $e_1,\dots,e_k$ is the canonical basis of $\R^k$. Upon replacing $g$ with $L\circ g$ we have $\widehat\nabla_\he g(p)= \Id_{k\times k}$. Possibly reducing $r$, we can assume that $|\nabla_\he g|$ is bounded on $U(p,3r)$ and that $\widehat\nabla_\he g\geq \tfrac 34 \Id_{k\times k}$ on $U(p,3r)$.

Fix $\chi_1 \in C^\infty_c (\H^n)$ such that $0 \leq \chi_1 \leq 1$, $\chi_1 \equiv 1$ on $U(p,1)$ and $\chi_1 \equiv 0$ on $\H^n \setminus U(p,2)$; set $C \ceq \| \nabla_\H \chi_1\|_{C^0}$ and $\chi_r(q)\ceq \chi_1(\delta_{1/r}(q))$ for $q \in \H^n$. Then the functions $\chi_r \in C^\infty_c(\he^n)$ satisfy
\begin{equation}\label{condchi}
\begin{cases}
0 \leq \chi_r \leq 1\\
\chi_r \equiv 1 \mbox{ on } U(p,r)\\
\chi_r \equiv 0 \mbox{ on }\he^n  \setminus U(p,2r) \\
|\nabla_\he \chi_r|\leq \frac{C}{r} .
\end{cases}
\end{equation}
We now consider the differential $d_\he g_p:\he^n \to \R^k$ of $g$ at $p$ and define $h:\he^n \to \R^k$ by 
\[
h \ceq \chi_r g+(1-\chi_r )d_\he g_p.
\] 
Clearly, $h \in C^1_\he(\he^n,\R^k)$, $h$ is Lipschitz continuous and $S \cap U(p,r)=\lbrace p \in U(p,r):h(p)=0 \rbrace$.

We claim that
\begin{equation}
\label{eq_1/2}
\text{ $\widehat{\nabla}_\he h\geq \tfrac12\Id_{k\times k}$\qquad on $\he^n$};
\end{equation}
we have
\begin{align*}
\widehat{\nabla}_\he h
& =\widehat{\nabla}_\he \chi_r \otimes (g-d_\he g_p)+\chi_r \widehat{\nabla}_\he g+(1-\chi_r)\widehat{\nabla}_\he g (p)\\
&\geq \widehat{\nabla}_\he \chi_r \otimes (g-d_\he g_p)+\chi_r \tfrac 34\Id_{k\times k}+(1-\chi_r) \Id_{k\times k}\\
&\geq \widehat{\nabla}_\he \chi_r \otimes (g-d_\he g_p)+ \tfrac 34\Id_{k\times k}.
\end{align*}

We are left to estimate the first term $\widehat{\nabla}_\he\chi_r \otimes (g-d_\he g_p)$. We have that
\[
\|\widehat{\nabla}_\he\chi_r \otimes (g-d_\he g_p)\|_{C^0(\he^n)} 
\leq \frac{C}{r}\|g-d_\he g_p\|_{C^0(U(p,2r))}
\leq \frac{C}{r} o(r) = o(1),
\]
where we used the Taylor expansion of $g$ at $p$. This proves the claim~\eqref{eq_1/2} provided $r>0$ is chosen small enough. 

Now we use a standard mollification procedure (as for instance in \cite[Proposition~2.10]{vittone}) to construct a new function  $f$ which satisfies the properties $(1)$-$(5)$ upon setting $U=U(p,r)$. First we define $Z_h \ceq \lbrace p \in \he^n:h(p)=0 \rbrace$. For $j \in \N$ we choose
\begin{itemize}
\item bounded open sets $(U_j)_{j \in \N}$ such that $\overline{U_j}\subset \he^n \setminus Z_h$ and $\he^n \setminus Z_h=\bigcup_j U_j $,
\item positive numbers $\ve_j$ such that $\ve_j<d(U_j,Z_h)$,
\item nonnegative functions $u_j \in C^\infty_c(U_j)$ forming a partition of the unity on $\he^n \setminus Z_h$.
\end{itemize}
We can also assume that $\sum_{j}\uno_{U_j}\leq M$ for some $M>0$, where $\uno_{U_j}$ is the indicator function of $U_j$ so that the sum $\sum_ju_j$ is locally finite. Possibly reducing $\ve_j>0$, as specified later,
we define
\[
f \ceq \begin{cases}
\sum_j u_j(h \star K_{\ve_j}) \mbox{ on }\he^n \setminus Z_h\\
0 \mbox{ on }Z_h,
\end{cases}
\]
where $K_{\ve_j}$ are standard mollification kernels. The function $f$ is clearly smooth on $\he^n \setminus Z_h$, so in order to check that $f$ is continuous on the whole $\he^n$ we  have to check its continuity at points of $Z_h$. 
Up to reducing $\ve_j$, we can assume that
\[
|(h \star K_{\ve_j})-h|\leq d(U_j,Z_h) \mbox{ on }U_j
\]
so that
\[
|f(x)-h(x)|\leq \sum_j u_j(x)|(h \star K_{\ve_j})(x)-h(x)|\leq d(x,Z_h).
\]
This implies the continuity of $f$ since for every $\overline{x}\in Z_h$ one has
\[
\lim_{x \to \overline{x}}|f(x)-f(\overline{x})|=\lim_{x \to \overline{x}}|f(x)|\leq \lim_{x \to \overline{x}}|h(x)|+d(x,\overline{x})=0.
\]
Now we want to prove that $f \in C^1_\he(\he^n,\R^k)$. Since $f$ is by definition smooth on $\he^n \setminus Z_h$ we just need to prove that for every $W \in \hel_1$ and $\overline{x}\in Z_h$
\[
\lim_{x \to \overline{x},x \not\in Z_h}(Wf)(x)=(Wh)(\overline{x}).
\]
For every $x \in \he^n \setminus Z_h$ we get (using $\sum_j Wu_j \equiv 0$)
\begin{align*}
&|(Wf)(x)-(Wh)(x)|\\
=&\left| \sum_j (Wu_j)(x)(h\star K_{\ve_j})(x)+ \sum_j u_j(x)(Wh\star K_{\ve_j})(x)-(Wh)(x) \right|\\
\leq &\left| \sum_j (Wu_j)(x)[(h\star K_{\ve_j})(x)-h(x)]\right|+\left| \sum_j u_j(x)[(Wh\star K_{\ve_j})(x)-(Wh)(x)] \right|\\
 \leq & Cd(x,Z_h)
\end{align*}
and letting $x \to \overline{x}$ we get the continuity of the horizontal derivatives, i.e., property (1). 

Now we prove property (2), i.e., that $|\nabla_\he f|$ is bounded. It is enough to prove that $|\nabla_\he f|$ is bounded on $\he^n \setminus Z_h$. We observe that
\begin{align*}
\nabla_\he f&= \sum_j (h \star K_{\ve_j})\otimes (\nabla_\he u_j)+\sum_j u_j ((\nabla_\he h)\star K_{\ve_j})\\
&=\sum_j(h \star K_{\ve_j}-h) \otimes (\nabla_\he u_j)+\sum_j u_j((\nabla_\he h) \star K_{\ve_j})
\end{align*}
on $\he^n \setminus Z_h$.
The last sum is bounded by $\sup |\nabla_\he h|$. Up to reducing $\ve_j$, we can assume that
\[
|h \star K_{\ve_j}-h|\leq (\sup |\nabla_\he u_j|)^{-1} \quad\text{on }U_j
\]
so that  the second to last sum is bounded by $M$. 

As for property (3), we observe that $\{f=0\}=Z_h$: in fact, by~\cite[Theorem~1.4]{vittone} both $\{f=0\}$ and $Z_h$ are entire intrinsic graphs on $\exp($span$\{X_{k+1},\dots, Y_n,T\})$. Since $Z_h\subset\{f=0\}$, the two sets coincide, hence  $f$ is smooth on $\he^n\setminus Z_h=\he^n\setminus\{f=0\}$.

We  prove  property $(4)$; using~\eqref{eq_1/2} we get
\begin{align*}
\widehat{\nabla}_\he f
&=\sum_j (h \star K_{\ve_j}-h)\otimes (\widehat{\nabla}_\he u_j)+\sum_j u_j((\widehat{\nabla}_\he h) \star K_{\ve_j})\\
&\geq \sum_j(h \star K_{\ve_j}-h)\otimes (\widehat{\nabla}_\he u_j)+\frac12\Id_{k\times k}.
\end{align*}
Given $\eta>0$, possibly reducing $\ve_j$ we can assume that
\[
|h \star K_{\ve_j}-h|\leq \eta (\sup |\widehat{\nabla}_\he u_j|)^{-1}\mbox{ on }U_j
\]
so that $\sup_{\he^n}|\sum_j (h \star K_{\ve_j}-h)\otimes (\widehat{\nabla}_\he u_j)|\leq M \eta$. If $\eta$ is small enough we get
\[
\widehat{\nabla}_\he f \geq  \frac 14 \Id_{k \times k},
\]
which is property (4).

Eventually, property $(5)$ trivially follows upon setting $U=U(p,r)$ and noticing that
\[
S \cap U=\lbrace p \in U : g(p)=0\rbrace
=\lbrace p \in U :  h(p)=0\rbrace
=\lbrace p \in U :  f(p)=0\rbrace.
\]
The proof is accomplished.
\end{proof}

\begin{defi}
Let $S\subset \he^n$. The {\em tangent cone} to $S$ at 0 is the set
\[
T^\he_0 S \ceq \lbrace x\in\he^n:\text{there exist } (x_h)_{h\in \N}\subset S,r_h \to +\infty \text{ and }\delta_{r_h}x_h \to x
\rbrace,
\] 
while the tangent cone to $S$ at $p$ is defined as
\[
T^\he_p S \ceq T^\he_0\tau_{p^{-1}}(S).
\]
\end{defi}

We have the following relationship between tangent cone and tangent group.
\begin{pro}[{\cite[Proposition 3.29]{franchi}}]
\label{teoten}
Let $S \subset \he^n$ be a $C^1_\he$ submanifold of codimension $1 \leq k \leq n$. Then
\[
T^\he_p S=T^g_\he S(p)\qquad\text{for every }p\in S.
\]
\end{pro}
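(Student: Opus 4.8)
The plan is to prove the two inclusions $T^\he_p S\se T^g_\he S(p)$ and $T^g_\he S(p)\se T^\he_p S$, after first reducing to the base point $p=0$. By definition $T^\he_p S=T^\he_0\tau_{p^{-1}}(S)$, and if $S\cap U=\{f=0\}$ then $\tau_{p^{-1}}(S)$ is locally the zero set of $f\circ\tau_p$, whose P-differential at $0$ is exactly $d_\he f_p$ (this is immediate from the definition of $d_\he f_p$ as a limit along $\tau_p\circ\delta_\lambda$). Hence $T^g_\he(\tau_{p^{-1}}S)(0)=\ker d_\he f_p=T^g_\he S(p)$, and it suffices to treat $p=0$, $f(0)=0$. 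Since the statement is invariant under isometries of $\he^n$ and under post-composition of $f$ with an element of $GL(\R^k)$ (which changes neither $\{f=0\}$ nor $\ker d_\he f_0$), I would argue as in the proof of Lemma~\ref{fsmooth} and assume, after the rotation provided by \cite[Proposition~3.25]{franchi}, that $\widehat\nabla_\he f(0)=\Id_{k\times k}$, i.e.\ $X_if(0)=e_i$ for $1\le i\le k$. Set $\W\ceq\ker d_\he f_0=T^g_\he S(0)$.

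The main tool is the family of blow-ups $f_\lambda(q)\ceq\lambda^{-1}f(\delta_\lambda q)$, whose zero set is $\{f_\lambda=0\}=\delta_{1/\lambda}(S)$. Since $f\in C^1_\he$ is P-differentiable at $0$, one has $f_\lambda\to d_\he f_0$ locally uniformly on $\he^n$ as $\lambda\to0^+$. The inclusion $T^\he_0 S\se\W$ is then straightforward: if $x\in T^\he_0 S$, pick $x_h\in S$ and $r_h\to+\infty$ with $q_h\ceq\delta_{r_h}x_h\to x$; setting $\lambda_h\ceq 1/r_h$ one gets $f_{\lambda_h}(q_h)=\lambda_h^{-1}f(x_h)=0$ because $\delta_{\lambda_h}q_h=x_h\in S$, and passing to the limit using the local uniform convergence together with $q_h\to x$ yields $d_\he f_0(x)=0$, i.e.\ $x\in\W$.

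For the reverse inclusion $\W\se T^\he_0 S$ — the heart of the matter — I would fix $x\in\W$ and, for $s\in\R^k$, consider $g_\lambda(s)\ceq f_\lambda(x\cdot P(s))$, where $P(s)\ceq(s_1,\dots,s_k,0,\dots,0)\in\he^n$. Since $d_\he f_0$ is a homomorphism and $d_\he f_0(x)=0$, the limit map is $g_0(s)=d_\he f_0(x\cdot P(s))=d_\he f_0(P(s))=s$, using $X_if(0)=e_i$; moreover $g_\lambda\to g_0$ uniformly on compact subsets of $\R^k$ because $f_\lambda\to d_\he f_0$ locally uniformly. On a small ball $B(0,\rho)\se\R^k$ the map $g_0=\Id$ does not vanish on $\partial B(0,\rho)$, so for $\lambda$ small the straight-line homotopy $\tau\mapsto(1-\tau)g_0+\tau g_\lambda$ stays nonzero on $\partial B(0,\rho)$; by homotopy invariance of the Brouwer degree, $\deg(g_\lambda,B(0,\rho),0)=\deg(\Id,B(0,\rho),0)=1$, whence $g_\lambda$ has a zero $s_\lambda\in B(0,\rho)$. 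Letting $\rho\to0$ produces $\lambda_h\to0^+$ and $s_h\to0$ with $g_{\lambda_h}(s_h)=0$, i.e.\ $q_h\ceq x\cdot P(s_h)\in\delta_{1/\lambda_h}(S)$ and $q_h\to x$. Writing $x_h\ceq\delta_{\lambda_h}q_h\in S$ and $r_h\ceq 1/\lambda_h\to+\infty$, one gets $\delta_{r_h}x_h=q_h\to x$, so $x\in T^\he_0 S$.

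The main obstacle is precisely this last inclusion: local uniform convergence $f_\lambda\to d_\he f_0$ alone does not force the zero sets to converge, and one must exploit the non-degeneracy $\widehat\nabla_\he f(0)=\Id_{k\times k}$ to guarantee that the blown-up zero sets $\delta_{1/\lambda}(S)$ actually fill up $\W$. The degree-theoretic step is the cleanest way I see to convert the normalization $g_0=\Id$ into the persistence of a nearby zero of $g_\lambda$; alternatively one could invoke the intrinsic implicit function theorem of \cite{franchi} to represent $S$ as an intrinsic graph and blow it up directly, but the degree argument has the advantage of needing only the $C^0$ convergence of the $f_\lambda$.
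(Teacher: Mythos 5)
Your proof is correct. Note that the paper itself offers no proof of this proposition: it is imported verbatim from \cite[Proposition 3.29]{franchi}, where it is obtained as a by-product of the intrinsic implicit function theorem, i.e.\ of the local parametrization of $S$ as an intrinsic graph over the tangent group and the uniform convergence of the rescaled graph maps. Your argument is a genuinely different, self-contained route. The reduction to $p=0$ via $d_\he(f\circ\tau_p)_0=d_\he f_p$, and the normalization $\widehat\nabla_\he f(0)=\Id_{k\times k}$ (legitimate, since both $T^\he_0 S$ and $\ker d_\he f_0$ transform covariantly under the horizontal rotation of \cite[Proposition 3.25]{franchi} and are unchanged by post-composition with $GL(\R^k)$, exactly as in the proof of Lemma~\ref{fsmooth}), are handled correctly. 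The easy inclusion $T^\he_0 S\se\ker d_\he f_0$ indeed needs only the locally uniform convergence $f_\lambda\to d_\he f_0$, and you correctly identify the reverse inclusion as the crux: persistence of zeros does not follow from $C^0$ convergence alone, and your fix --- restricting the blow-ups to the horizontal slice $s\mapsto x\cdot P(s)$, noting that the limit map is the identity because $d_\he f_0$ is a homomorphism with $d_\he f_0(x)=0$ and $X_if(0)=e_i$, and then invoking homotopy invariance of the Brouwer degree to produce zeros $s_\lambda\to 0$ --- is sound in every step (the degree of $g_\lambda$ on $B(0,\rho)$ equals $1$ once $\|g_\lambda-g_0\|_{C^0(\partial B(0,\rho))}<\rho$, and the diagonal choice $\rho_h\to 0$, $\lambda_h\to 0$ yields points $x_h=\delta_{\lambda_h}(x\cdot P(s_h))\in S$ with $\delta_{1/\lambda_h}x_h\to x$). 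Compared with the cited approach, yours trades machinery for generality of conclusion: it needs neither the implicit function theorem nor any graph parametrization, only P-differentiability of $C^1_\he$ maps and elementary degree theory; on the other hand, the graph-based proof in \cite{franchi} yields the stronger statement that $\delta_{1/\lambda}(\tau_{p^{-1}}S)$ converges to $T^g_\he S(p)$ locally in Hausdorff distance (used elsewhere, e.g.\ in Lemma~\ref{lem_approxtan}), which your pointwise cone argument does not directly provide --- though it is also not claimed in the statement under review.
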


The following Lemma~\ref{lem_approxtan} follows from~\cite[Lemma 2.14 (iii)]{jnv}.

\begin{lem}\label{lem_approxtan}
Let $S \subset \he^n$ be a $C^1_\he$ submanifold of codimension $1 \leq k \leq n$. Then, for every compact set $K\subset S$ and every $\ve>0$ there exists $\delta>0$ such that
\[
\forall\;p\in K,\ \forall\;q\in S\cap U(p,\delta)\qquad d(q,p\cdot T_p^\he  S)<\ve d(p,q).
\]
\end{lem}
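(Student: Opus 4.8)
The plan is to derive the uniform estimate from the per-chart first-order expansion of the defining function, the only genuinely new point being the uniformity over the compact set $K$. First I would reduce to a single defining function by a covering argument: every point of $S$ has a neighbourhood on which $S=\{f=0\}$ for some $f\in C^1_\he(\cdot,\R^k)$ with $\nabla_\he f_1\wedge\cdots\wedge\nabla_\he f_k\neq 0$. Covering the compact set $K$ by finitely many such charts $U_1,\dots,U_N$ and using a Lebesgue-number argument, I would produce $\delta_0>0$ such that for every $p\in K$ the ball $U(p,\delta_0)$ is contained in a single chart $U_i$ that also contains $p$. Then for $q\in S\cap U(p,\delta_0)$ both $p$ and $q$ lie in the same chart, so I may work with a fixed $f$ on a fixed compact subset of $U_i$, on which all the constants below are uniform.

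Next, using left invariance of $d$ I would set $w\ceq p^{-1}\cdot q$ and rewrite, recalling $T^\he_p S=T^g_\he S(p)=\ker d_\he f_p$ (Proposition~\ref{teoten}),
\[
d(q,p\cdot T^\he_p S)=d(w,\ker d_\he f_p),\qquad \|w\|=d(p,q).
\]
The homomorphism $d_\he f_p$ is horizontal, of the form $(x,y,t)\mapsto M_p\,(x,y)$ with $M_p$ the $k\times 2n$ matrix of horizontal gradients at $p$; on the fixed compact set the condition $\nabla_\he f_1\wedge\cdots\wedge\nabla_\he f_k\neq 0$ gives a uniform lower bound on the singular values of $M_p$, while $|\nabla_\he f|$ is bounded. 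Hence I can choose a horizontal correction $(a,b)$ in the row space of $M_p$ with $|(a,b)|\le C|d_\he f_p(w)|$ and $M_p\big((x,y)-(a,b)\big)=0$, so that $w_0\ceq(x-a,y-b,t)\in\ker d_\he f_p$. A direct computation shows that $w_0^{-1}\cdot w$ has horizontal part $(a,b)$ and vertical part $\tfrac12(\langle a,y\rangle-\langle x,b\rangle)$, the latter being bounded by $C|(a,b)|\,|(x,y)|\le C|d_\he f_p(w)|\,\|w\|$, where I use $|(x,y)|\le C\|w\|$ (a consequence of the inequality $d(0,w)\ge C|\pi_0(w)|_0$ stated right after Definition~\ref{proj}). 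Therefore
\[
d(w,\ker d_\he f_p)\le\|w_0^{-1}\cdot w\|\le C\,|d_\he f_p(w)|+C\big(|d_\he f_p(w)|\,\|w\|\big)^{1/2}.
\]

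Finally I would feed in the first-order expansion. Since $f(q)=f(p)=0$, the uniform (in $p$) P-differentiability estimate of \cite[Lemma 2.14 (iii)]{jnv} gives, on the fixed compact set,
\[
|d_\he f_p(w)|=\big|f(p\cdot w)-f(p)-d_\he f_p(w)\big|\le\|w\|\,\sigma(\|w\|),
\]
for a modulus $\sigma$ with $\sigma(s)\to 0$ as $s\to0$, uniform over $p$. Substituting into the previous display yields
\[
d(q,p\cdot T^\he_p S)\le C\|w\|\big(\sigma(\|w\|)+\sigma(\|w\|)^{1/2}\big)=d(p,q)\cdot o(1)\quad\text{as }d(p,q)\to 0,
\]
with the $o(1)$ uniform over $p\in K$; choosing $\delta\le\delta_0$ so small that $C(\sigma(\delta)+\sigma(\delta)^{1/2})<\ve$ then finishes the proof. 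The main obstacle is exactly this uniformity: a pointwise application of the expansion at each $p$ does not suffice, since the admissible radius could degenerate as $p$ varies, so one must combine the uniform expansion of \cite[Lemma 2.14 (iii)]{jnv} with the uniform non-degeneracy of $M_p$; a second, more technical point is that the vertical component of $w_0^{-1}\cdot w$ is controlled only to second order and must be carefully absorbed into the final estimate. (Alternatively, the same conclusion could be reached by a blow-up contradiction argument, rescaling a hypothetical sequence of bad pairs $(p_h,q_h)$ and using that the rescaled manifolds converge to the tangent group together with the continuity of $p\mapsto T^\he_p S$; the direct estimate above seems cleaner and more self-contained.)
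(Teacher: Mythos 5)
Your proposal is correct, but it is a genuinely different route from the paper's: the paper offers no argument at all for this lemma, disposing of it with the single sentence that it ``follows from [jnv, Lemma 2.14 (iii)]'', whereas you give a self-contained proof. Your chain of estimates checks out: the reduction by left invariance to $d(w,\ker d_\he f_p)$ with $w=p^{-1}\cdot q$, the uniform lower bound on the singular values of $M_p$ (which follows from a positive lower bound on $|\nabla_\he f_1\wedge\cdots\wedge\nabla_\he f_k|$ and an upper bound on $|\nabla_\he f|$ on a compact set), the explicit computation $w_0^{-1}\cdot w=\bigl(a,b,\tfrac12(\langle a,y\rangle-\langle x,b\rangle)\bigr)$, and the absorption of the vertical term via $|t'|^{1/2}\le (C|d_\he f_p(w)|\,\|w\|)^{1/2}\le C\|w\|\sigma(\|w\|)^{1/2}$ are all sound, and the Lebesgue-number reduction handles uniformity over $K$ properly. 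What your approach buys is transparency: it isolates exactly which analytic input is needed --- namely the \emph{uniform} stratified Taylor inequality $|f(p\cdot w)-f(p)-d_\he f_p(w)|\le\|w\|\,\sigma(\|w\|)$ on compact sets --- and shows that everything else is elementary linear algebra and group arithmetic; it also exhibits the source of the $\ve\, d(p,q)$ gain (the modulus $\sigma$ plus the square root coming from the vertical coordinate). One caveat on attribution: judging from how the paper uses [jnv, Lemma 2.14] elsewhere (in the proof of Lemma \ref{lem_fk+1}, to get local Hausdorff convergence of rescaled level sets to the tangent group), that cited item is a blow-up statement about \emph{sets}, essentially the content of the present lemma itself, not the functional Taylor expansion you invoke; what you actually need is the standard uniform P-differentiability of $C^1_\he$ functions on compact sets, which is classical (Franchi--Serapioni--Serra Cassano, or [jnv, Proposition 2.4]-type statements) but probably not item (iii) of that lemma. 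This is a citation inaccuracy, not a mathematical gap.
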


\begin{defi}\label{def_split}
Assume $\mathfrak{h}=\mathfrak{v} \oplus \mathfrak{w}$ with $\mathfrak{v}$ and $\mathfrak{w}$ subalgebras. Then we define the subgroups $\mathbb{V}=\exp(\mathfrak{v})$ and $\mathbb{W}=\exp(\mathfrak{w})$. We say that a set $S \subset \he^n$ is an (continuous, respectively smooth) \emph{intrinsic graph} over $\W$ along $\V$ if there is a (continuous, resp. smooth) function $\phi:E \subset \W \to \V$ such that $S=gr_\phi$, where by definition
\[
gr_\phi \ceq \lbrace \xi \cdot \phi(\xi) :\xi \in E \rbrace.
\]
We define the \emph{graph map} $\Phi:E \to \H^n$ associated to $\phi$ as $\Phi(\xi) \ceq \xi \cdot \phi(\xi)$ for every $\xi \in E$.
If, moreover, the subalgebras $\mathfrak{w}$ and $\mathfrak{v}$ are orthogonal, we say that $\he^n=\W \cdot \V$ is an \emph{orthogonal splitting} of $\he^n$ and $S$ is an \emph{orthogonal graph}. 
\end{defi}

\begin{rem}\label{rem_proiezioniplitting}
Given a splitting $\he^n=\W \cdot \V$, we will denote by $\pi_\W:\H^n \to \W$ and $\pi_\V:\H^n \to \V$ the projections uniquely defined by $p=\pi_\W(p)\cdot\pi_\V(p)$ for every $p\in\he^n$.
\end{rem}

Combining Lemma~\ref{fsmooth} with~\cite[Theorem 4.1]{franchi},~\cite[Lemma 2.10]{jnv},~\cite[Theorem 8.7 and Theorem 1.3]{cornimagnani2} and~\cite[Theorem 1.6 and Proposition 2.10]{vittone}, we can state the following result.

\begin{teo}
\label{fond1}
Let $S \subset \he^n$ be a $C^1_\he$ submanifold of codimension $1 \leq k \leq n$. Then $S$ is locally an orthogonal continuous graph, i.e., for each $p \in S$ there exist
\begin{itemize}
\item an orthogonal splitting $\H^n=\W \cdot \V$ where $\V$ is a horizontal $k$-dimensional vector space,
\item relatively open sets $A \subset \W$ and $B \subset \V$ such that $p \in U \ceq A\cdot B$,
\item a continuous function $\phi:A \to B$
\item a function $f=(f_1,...,f_k) \in C^1_\he(U,\R^k)$ with $\nabla_\he f_1 \wedge \cdots \wedge \nabla_\he f_k \neq 0$
\end{itemize}
such that 
\[
S \cap U=\lbrace q \in U :f(q)=0 \rbrace=\lbrace \xi \cdot \phi(\xi) :\xi \in A \rbrace.
\]
Further, if $v_1,\dots,v_k\in\mathfrak v$ form an orthonormal basis of the Lie algebra $\mathfrak v\subset \mathfrak h$ of $\V$ and we put
\[
\Delta(q) \ceq |\det[v_if_j(q)]_{1\leq i,j \leq k}| \neq 0 \mbox{ for }q \in U
\]
then
\begin{equation}\label{eq_formulaareaFSSC}
\mathcal{S}^{Q-k}\res(S \cap U)=C_{n,k} \Phi_\# \left( \left( \frac{|\nabla_\he f_1 \wedge \cdots \wedge \nabla_\he f_k|}{\Delta} \circ \Phi \right) \mathcal{H}^{2n+1-k}_E \res \W \right)
\end{equation}
where $\Phi$ is the graph map associated to $\phi$, $\Phi_\#$ denotes push-forward of measures, $\mathcal{H}^{2n+1-k}_E$ denotes the classical Euclidean $(2n+1-k)$-Hausdorff measure and $C_{n,k}$ is a positive constant only depending on $n,k$ and the distance $d$. 

Moreover, the function $f$ above can be chosen to be defined in the whole $\he^n$, and also in such a way that $f \in C^1_\he(\he^n,\R^k) \cap C^\infty(\he^n \setminus \lbrace x \in \he^n:f(x)=0\rbrace,\R^k)$ and there exists a sequence  of smooth functions  $\phi_h: \W \to \V$ such that
\begin{align*}
& \phi_h \to \phi \mbox{ uniformly in A as }h \to +\infty\\
& gr_{\phi_h}=\{q\in \he^n : f(q)=(\tfrac 1h,0,\dots,0)\}.
\end{align*}
\end{teo}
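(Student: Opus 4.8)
The plan is to use Lemma~\ref{fsmooth} to produce, near each $p\in S$, a globally defined defining function with quantitative properties, and then to read off the three assertions from the cited structural and area-formula results. The theorem is essentially an assembly of those references around the output of Lemma~\ref{fsmooth}.

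First I would fix $p\in S$ and apply Lemma~\ref{fsmooth}, which (up to an isometry of $\he^n$) provides $f\in C^1_\he(\he^n,\R^k)\cap C^\infty(\he^n\setminus\{f=0\})$ with $|\nabla_\he f|$ bounded, $\widehat\nabla_\he f\geq\delta_0\Id_{k\times k}$ in the sense of quadratic forms, and $S\cap U=\{f=0\}$ on a ball $U=U(p,r)$. As the proof of that lemma shows, the isometry can be chosen so that $X_1f(p),\dots,X_kf(p)\in\R^k$ are linearly independent; hence $\V\ceq\exp(\spa(X_1,\dots,X_k))$ is a horizontal $k$-dimensional subgroup and $\W\ceq\exp(\spa(X_{k+1},\dots,Y_n,T))$ yields an orthogonal splitting $\he^n=\W\cdot\V$. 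The implicit function theorem for $C^1_\he$ submanifolds~\cite[Theorem 4.1]{franchi} (see also~\cite[Lemma 2.10]{jnv}) then furnishes relatively open sets $A\subset\W$ and $B\subset\V$ with $p\in A\cdot B$ and a continuous $\phi:A\to B$ such that $S\cap U=gr_\phi$; moreover $\Delta=|\det[v_if_j]|=|\det\widehat\nabla_\he f|\neq 0$ because $\widehat\nabla_\he f\geq\delta_0\Id$ is invertible. This gives the first assertion.

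Next I would obtain the area formula~\eqref{eq_formulaareaFSSC}, which is precisely the intrinsic area formula for $C^1_\he$ submanifolds established in~\cite[Theorem 8.7 and Theorem 1.3]{cornimagnani2} and~\cite[Theorem 1.6]{vittone}: these express $\s^{Q-k}\res(S\cap U)$ as the push-forward under the graph map $\Phi$ of the Euclidean measure $\mathcal H^{2n+1-k}_E\res\W$ weighted by the density $(|\nabla_\he f_1\wedge\cdots\wedge\nabla_\he f_k|/\Delta)\circ\Phi$, with $C_{n,k}$ the metric normalization constant relating the spherical measure to the density. I would simply specialize those statements to the present $f$ and splitting, using $\Delta\neq 0$ from the previous step.

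Finally, for the global defining function and the smooth approximation, the crucial ingredient is the quantitative bound $\widehat\nabla_\he f\geq\delta_0\Id$ provided by property (4) of Lemma~\ref{fsmooth}: by~\cite[Theorem 1.4 and Proposition 2.10]{vittone} this forces every level set $\{f=c\}$, $c\in\R^k$, to be an entire intrinsic graph over $\W$ along $\V$. Applying this with $c=(\tfrac1h,0,\dots,0)$ produces $\phi_h:\W\to\V$ with $gr_{\phi_h}=\{f=(\tfrac1h,0,\dots,0)\}$, and for large $h$ these level sets lie in $\he^n\setminus\{f=0\}$, where $f$ is smooth by property (3), so each $\phi_h$ is smooth. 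I expect the main obstacle to be the uniform convergence $\phi_h\to\phi$ on $A$: this should follow from the continuity of $f$ combined with the uniform intrinsic-Lipschitz estimates on the graphs $\{f=c\}$ that the lower bound $\delta_0$ yields via~\cite{vittone}, so that the $gr_{\phi_h}$ are equi-(intrinsic-)Lipschitz and converge locally uniformly to $gr_\phi$ as $\tfrac1h\to 0$.
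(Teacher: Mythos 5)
Your proposal is correct and takes essentially the same approach as the paper, which states Theorem~\ref{fond1} precisely as an assembly of Lemma~\ref{fsmooth} with the implicit-function, area-formula and entire-graph results cited just before its statement. The one step you flag as a possible obstacle, the uniform convergence $\phi_h\to\phi$ on $A$, in fact follows at once from property (4) of Lemma~\ref{fsmooth}: integrating $\widehat\nabla_\he f\geq\delta_0\Id_{k\times k}$ along the fiber $\xi\cdot\V$ gives $\delta_0|\phi_h(\xi)-\phi(\xi)|^2\leq\langle(\tfrac1h,0,\dots,0),\phi_h(\xi)-\phi(\xi)\rangle_{\R^k}$, hence $\sup_A|\phi_h-\phi|\leq 1/(\delta_0 h)$, which is cleaner than invoking equi-intrinsic-Lipschitz estimates.
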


\begin{rem}\label{rem_cnk}
An explicit expression of the constant $C_{n,k}$ in Theorem \ref{fond1} is given by
\[
C_{n,k}=\left( \sup \lbrace \mathcal{L}^{2n+1-k}(\W \cap U(p,1)): p \in U(0,1) \rbrace  \right)^{-1},
\]
see \cite{magnani} and \cite{CorniMagnani}.
\end{rem}

\subsection{The Rumin complex}\label{subsec_Rumin}
Recall that the exterior algebras of multivectors were defined in Definition~\ref{def_multialgebre}.

We now introduce the Rumin complex.
\begin{defi}
For a given integer $0\leq k\leq 2n+1$ we define the following sets of covectors:
\begin{align*}
\mathcal{I}^k &\ceq  \lbrace \lambda \wedge \theta+\mu \wedge d\theta :\lambda \in \bwl^{k-1}\hel,\mu \in \bwl^{k-2}\hel \rbrace,
\\
\mathcal{J}^k &\ceq \lbrace \lambda \in \bwl^k \hel : \lambda \wedge \theta=\lambda \wedge d\theta=0 \rbrace,
\end{align*}
where we adopted the convention that $\bwl^i \hel \ceq \lbrace 0 \rbrace$ if $i<0$. Then we define the {\em Heisenberg differential $k$-forms} as 
\begin{align*}
\Omega^k_\he& \ceq C^\infty \left( \he^n, \frac{\bwl^k \hel}{\mathcal{I}^k}\right) \hspace{-2 cm} &&\mbox{ if }0 \leq k \leq n,
\\
\Omega^k_\he& \ceq C^\infty ( \he^n,\mathcal{J}^k) \hspace{-2 cm} && \mbox{ if }n+1 \leq k \leq 2n+1.
\end{align*}
\end{defi}

It is worth noticing that, for $k\geq n+1$, the exterior differentiation\footnote{We use the same symbol, $d$, for both the distance in $\he^n$ and for exterior differentiation; no confusion will ever arise.} $d$ satisfies $d(\Omega^k_\he)\subset(\Omega^{k+1}_\he)$.
For $k\leq n-1$ we have $d(C^\infty(\he^n;\mathcal I^k))\subset C^\infty(\he^n;\mathcal I^{k+1})$; in particular, $d$ passes to the quotient  defining an operator $d:\Omega^k_\he\to\Omega^{k+1}_\he$.

\begin{teo}[See {\cite{rumin}}]
\label{seqrum}
There exists a second-order differential operator $D:\Omega^{n}_\he \to \Omega^{n+1}_\he$ such that the sequence
\[
0 \to \mathbb{R} \to \Omega_\he^0 \xrightarrow{d}\Omega_\he^1 \xrightarrow{d}\cdots \xrightarrow{d}\Omega^n_\he \xrightarrow{D}\Omega^{n+1}_\H \xrightarrow{d}\cdots \xrightarrow{d} \Omega^{2n+1}_\he \to 0
\]
is exact. 
\end{teo}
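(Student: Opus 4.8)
The plan is to reduce the statement to symplectic linear algebra on the horizontal covectors and then to transport exactness from the de Rham complex. Since $\he^n$ is diffeomorphic to the contractible space $\R^{2n+1}$, the classical Poincaré Lemma tells us that the augmented de Rham complex $0\to\R\to\Omega^0\to\cdots\to\Omega^{2n+1}\to0$ is exact; hence it suffices to construct $D$ and to exhibit the Rumin complex as chain-homotopy equivalent to the de Rham complex, so that the two have the same cohomology.

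First I would set up the algebraic framework. The form $d\theta=-\sum_{j}dx_j\wedge dy_j$ is, up to sign, the standard symplectic form on the $2n$-dimensional space $\hel_1$, so the Lefschetz operator $L\colon\bwl^k\hel_1\to\bwl^{k+2}\hel_1$, $L\alpha\ceq\alpha\wedge d\theta$, satisfies the hard Lefschetz relations: $L$ is injective for $k\le n-1$, surjective for $k\ge n-1$, and $L\colon\bwl^{n-1}\hel_1\to\bwl^{n+1}\hel_1$ is an isomorphism. With the primitive decomposition $\bwl^k\hel_1=\bigoplus_{j\ge0}L^jP^{k-2j}$, where $P^\ell\ceq\{\beta\in\bwl^\ell\hel_1:\beta\wedge(d\theta)^{n-\ell+1}=0\}$ are the primitive forms, one obtains the fibrewise identifications $\bwl^k\hel/\mathcal I^k\cong\bwl^k\hel_1/L\bwl^{k-2}\hel_1\cong P^k$ and $\mathcal J^k\cong\theta\wedge\ker\!\big(L|_{\bwl^{k-1}\hel_1}\big)$ (here one uses that wedging with $\theta$ is injective on horizontal forms of degree $\le2n$). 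A crucial consequence of the surjectivity and injectivity of $L$ is that $\bwl^k\hel/\mathcal I^k=0$ for $k\ge n+1$ and $\mathcal J^k=0$ for $k\le n$: thus the Rumin complex naturally splits into a low half ($k\le n$, governed by the quotient $\Omega^\bullet/\mathcal I^\bullet$) and a high half ($k\ge n+1$, governed by the subcomplex $\mathcal J^\bullet$), joined only at the critical degree.

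Next I would construct the bridging operator $D$. Away from degree $n$ the differentials are induced by the de Rham $d$, since $d\mathcal I^k\se\mathcal I^{k+1}$ for $k\le n-1$ and $d\mathcal J^k\se\mathcal J^{k+1}$ for $k\ge n+1$, as already noted. The key lemma is that each class $[\alpha]\in\bwl^n\hel/\mathcal I^n$, represented by a horizontal primitive form $\alpha_0$, has a \emph{unique} representative $\tilde\alpha=\alpha_0+\gamma\wedge\theta$ (with $\gamma\in\bwl^{n-1}\hel_1$) such that $d\tilde\alpha\in\mathcal J^{n+1}$; one then sets $D[\alpha]\ceq d\tilde\alpha$. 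Indeed, extracting the purely horizontal component of the condition $d\tilde\alpha\wedge\theta=0$ turns it into the linear equation $L\gamma=\pm(d\alpha_0)_0$, where $(d\alpha_0)_0$ is the horizontal part of $d\alpha_0$; this is uniquely solvable precisely because $L\colon\bwl^{n-1}\hel_1\to\bwl^{n+1}\hel_1$ is an isomorphism. The second condition defining $\mathcal J^{n+1}$, namely $d\tilde\alpha\wedge d\theta=0$, is then automatic, being forced by $d^2\tilde\alpha=0$. Since $\gamma$ depends on one derivative of $\alpha_0$ through $L^{-1}$ and $D[\alpha]=d\tilde\alpha$ involves $d\gamma$, the operator $D$ is genuinely of second order, and the relations $D\circ d=0$ and $d\circ D=0$ follow from $d^2=0$ together with the defining property of $D$, so that the sequence is a complex.

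Finally, for exactness I would compare the Rumin complex $E^\bullet$ with the de Rham complex through two chain maps: the projection $\Pi\colon\Omega^\bullet\to E^\bullet$ (quotient modulo $\mathcal I^k$ for $k\le n$, and the primitive projection onto $\mathcal J^k$ for $k\ge n+1$), and the map $\iota\colon E^\bullet\to\Omega^\bullet$ sending a class to its canonical representative $\tilde\alpha$ for $k\le n$ and being the inclusion $\mathcal J^k\hookrightarrow\Omega^k$ for $k\ge n+1$. The operator $D$ is exactly what makes $\iota$ commute with the differentials at the junction $n\to n+1$. Constructing explicit chain homotopies showing that $\Pi$ and $\iota$ are mutually inverse up to homotopy then identifies $H^\bullet(E^\bullet)$ with $H^\bullet_{\mathrm{dR}}(\he^n)$ in every degree, and exactness of the augmented Rumin complex follows from the Poincaré Lemma. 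The main obstacle is precisely this last step: the construction of the homotopies and the verification that passing to primitive/coprimitive representatives is a quasi-isomorphism. This is the genuinely non-formal part, where the total non-integrability of the horizontal distribution, encoded in the nondegeneracy of $d\theta$ and hence in hard Lefschetz, is indispensable; away from the critical degree everything is formal, but at degree $n$ the second-order nature of $D$ cannot be avoided.
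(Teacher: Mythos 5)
Your proposal cannot really be compared with ``the paper's own proof'', because the paper gives none: Theorem \ref{seqrum} is quoted directly from Rumin's work \cite{rumin}, and the only related material in the text is Remark \ref{rem_defDRumin}, which records the explicit formula $D\alpha=d\bigl(\alpha-\theta\wedge L^{-1}((d\alpha)_{\hel_1})\bigr)$. Measured against that, the algebraic part of your argument is sound: the symplectic identification of $d\theta$, the hard Lefschetz properties of $L$, the identifications $\bwl^k\hel/\mathcal I^k\cong P^k$ and $\mathcal J^k\cong\theta\wedge\ker\bigl(L|_{\bwl^{k-1}\hel_1}\bigr)$ together with their vanishing in the complementary ranges, and the definition of $D$ by solving $L\gamma=\pm(d\alpha_0)_{\hel_1}$ (with $d\tilde\alpha\wedge d\theta=0$ then forced by $d^2=0$) are all correct and reproduce exactly the operator of Remark \ref{rem_defDRumin}. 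One imprecision: to get $D\circ d=0$ you need uniqueness among \emph{all} representatives whose differential lies in $\mathcal J^{n+1}$, not only among those of the special form $\alpha_0+\gamma\wedge\theta$ with $\alpha_0$ primitive, because $d\beta$ itself need not have primitive horizontal part. The stronger statement is true (two such representatives differ by $\lambda\wedge\theta+\mu\wedge d\theta$; absorbing $\mu\wedge d\theta$ into an exact term plus a $\theta$-term and using injectivity of $L$ on $\bwl^{n-1}\hel_1$, the difference is exact, so the differentials agree), and it is also what shows that the formula of Remark \ref{rem_defDRumin} passes to the quotient; but it must be proved, not asserted.

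The genuine gap is exactness, which is the entire content of the theorem beyond fibrewise linear algebra, and which you do not prove: you reduce it to constructing chain homotopies between the Rumin and de Rham complexes and then concede that this construction is ``the main obstacle''. Moreover, the two maps you propose are not chain maps in the degrees where they are not the canonical quotient or inclusion: for $k\ge n+1$ the ``primitive projection'' $\Omega^k\to\mathcal J^k$ does not intertwine $d$ with $d|_{\mathcal J}$, and for $k<n$ passing to the primitive representative does not commute with $d$ (the differential of a horizontal primitive form acquires both $\theta$-components and non-primitive horizontal components), so the observation that ``$D$ makes $\iota$ commute at the junction'' does not yield chain maps at all. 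Rumin's actual argument either corrects $\iota$ and $\Pi$ into genuine chain maps built from differential (not merely algebraic) operators, or, equivalently, proves that the differential ideal $\mathcal I^\bullet$ is acyclic in degrees $\le n$ and the quotient $\Omega^\bullet/\mathcal J^\bullet$ is acyclic in degrees $\ge n+1$, by explicit homotopies involving $L^{-1}$ and the Reeb direction; only after that does the Poincar\'e Lemma transport exactness to the Rumin complex. Until such a step is carried out, what you have is an outline of Rumin's strategy with a correct construction of $D$, not a proof of the theorem.
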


\begin{defi}
We denote by $d_c:\Omega^k_\he\to\Omega^{k+1}_\he$ the operator $d_c \ceq d$ if $k\neq n$, $d_c \ceq D$ if $k=n+1$.
\end{defi}

\begin{rem}\label{rem_defDRumin}
We give an explicit expression for $D$, see e.g. \cite[Section 3.2]{vittone}. First we define the Lefschetz operator $L:\bwl^k \R^{2n}\to \bwl^{k+2}\R^{2n}$ as $L(\lambda) \ceq \lambda \wedge d\theta$. When $k=n-1$ then $L$ is bijective (see \cite[Proposition 1.1]{bryant}). Now we observe that 
\[
\frac{\bwl^n \hel}{\mathcal{I}^n}=\frac{\bwl^n{\hel_1}}{\lbrace \mu \wedge d \theta :\mu \in \bwl^{n-2}\hel_1 \rbrace}.
\] 
Then we define $D$ on smooth sections of $\bwl^n \hel_1$  as 
\[
D(\alpha) \ceq d(\alpha-\theta \wedge L^{-1}((d\alpha)_{\hel_1})),
\]
where $\alpha$ is a smooth section of $\bwl^n \hel_1$ and $(\cdot)_{\hel_1}$ denotes the horizontal part (i.e. given $\lambda \in \bwl^k \hel$ for $1 \leq k \leq 2n$ then $\lambda_{\hel_1} \in \bwl^{k}\hel_1$ is the unique covector such that $\lambda=\lambda_{\hel_1}+\mu \wedge \theta$ for a (unique) $\mu \in \bwl^{k-1}\hel_1$).
Since $D$ passes to the quotient modulo smooth sections of $\lbrace \mu \wedge d \theta :\mu \in \bwl^{n-2}\hel_1 \rbrace$ we get that $D$ is well defined as a linear operator $\Omega_\he^n \to \Omega_\he^{n+1}$.
\end{rem}

\subsection{Currents and integration on \texorpdfstring{$C^1_\he$}{C1H}-regular submanifolds}\label{subsec_correnti}

\begin{defi}
Given an open set $O \subset \he^n$ we define the Heisenberg differential $k$-forms with compact support in $O$ as
\begin{align*}
\D^k_\he(O)& \ceq C^\infty_c \left( O, \frac{\bwl^k \hel}{\mathcal{I}^k}\right) \hspace{-2 cm} &&\mbox{ if }0 \leq k \leq n,
\\
\D^k_\he(O)& \ceq C^\infty_c (O,\mathcal{J}^k) \hspace{-2 cm} &&\mbox{ if }n+1 \leq k \leq 2n+1.
\end{align*}
We denote the dual of $\D^k_\he(O)$ (endowed with the natural topology) by $\D_{\he,k}(O)$. An element of  $\D_{\he,k}(O)$ is called {\em Heisenberg current} of dimension $k$ (or {Heisenberg $k$-current}).
\end{defi}

\begin{defi}
Let $O \subset \he^n$ be an open set and $\Tcurr \in \mathcal{D}_{\he,k}(O)$ with $1 \leq k \leq 2n+1$. The \emph{boundary} $\pp_c\Tcurr\in \mathcal{D}_{\he,k-1}(O) $ of $\Tcurr$ is the Heisenberg $(k-1)$-current defined by duality with $d_c$, i.e., 
\begin{align*}
&\partial_c \Tcurr(\omega) \ceq \Tcurr(d \omega) \mbox{ if }k \neq n+1\\
&\partial_c \Tcurr(\omega) \ceq \Tcurr(D \omega) \mbox{ if }k = n+1
\end{align*}
for every $\omega \in \mathcal{D}_\he^{k-1}(O)$.
\end{defi}

We want to define the current canonically associated to a $C^1_\H$-regular submanifold; this, of course, requires the notion of  orientability.

Recalling Remark~\ref{rem_C1HC1dimensionebassa}, for submanifolds of low dimension $1 \leq k \leq n$ we define orientability  as the classical Euclidean one.
 
Let then $O \subset \he^n$ be an open set and $S\subset O$ be an oriented $C^1_\he$ submanifold of dimension $1 \leq k \leq n$ and with locally finite measure; we claim that for every smooth $(k-1)$-form $\lambda$ and every smooth $(k-2)$-form $\mu$
\begin{equation}\label{eq_integralinulli}
\int_S\lambda\wedge\theta=0\qquad\text{and}\qquad \int_S\mu\wedge d\theta=0.
\end{equation}
The first equality in~\eqref{eq_integralinulli} is clear because $TS\subset H\H^n=\ker \theta$, see again Remark~\ref{rem_C1HC1dimensionebassa}. Concerning the second equality, let us fix a sequence $(S_j)_j$ of $k$-dimensional submanifolds $S_j\subset S$ with $C^1$ boundary and such that $S_j \nearrow S$; then 
\begin{align*}
\int_S\mu\wedge d\theta
&=(-1)^{k-2}\int_S (d(\mu\wedge\theta)-d\mu\wedge\theta)
= (-1)^{k}\int_S d(\mu\wedge\theta)\\
&= (-1)^{k}\lim_{j\to\infty}\int_{S_j} d(\mu\wedge\theta)=(-1)^{k}\lim_{j\to\infty}\int_{\pp S_j} \mu\wedge\theta=0,
\end{align*}
where we used the fact that $T\pp S_j\subset TS\subset \ker \theta$.

We can now state an important consequence of~\eqref{eq_integralinulli}.

\begin{pro}\label{prop_integrazionebassa}
If $O \subset \he^n$ is an open set, $S\subset O$ is an oriented $C^1_\he$-regular submanifold of dimension $1 \leq k \leq n$ whose measure is locally finite in $O$ and $\omega \in \D^k_\he(O)$ is a  Heisenberg $k$-form, then the integral
\[
\int_S \tilde\omega
\]
does not depend on the  representative $\tilde\omega\in C^\infty_c ( O, \bwl^k \hel)$ of the equivalence class $\omega$ in the quotient $\D_\H^k(O)=C^\infty_c ( O, {\bwl^k \hel}/{\mathcal{I}^k})$. 
\end{pro}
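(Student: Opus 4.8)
The plan is to reduce the entire statement to the two vanishing identities already recorded in~\eqref{eq_integralinulli}. First I would observe that if $\tilde\omega_1,\tilde\omega_2\in C^\infty_c(O,\bwl^k\hel)$ are two representatives of the same class $\omega$, then their difference $\eta\ceq\tilde\omega_1-\tilde\omega_2$ is a smooth, compactly supported section of $\bwl^k\hel$ whose value $\eta(p)$ lies in the subspace $\mathcal I^k$ for every $p\in O$. Since, by linearity of the integral, $\int_S\tilde\omega_1-\int_S\tilde\omega_2=\int_S\eta$, it suffices to prove that $\int_S\eta=0$ for every such $\eta$.

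The key point — and the step I expect to carry the real content — is that $\mathcal I^k$ is a \emph{constant} linear subspace of the fibre $\bwl^k\hel$. Indeed, in the left-invariant coframe $\theta_1,\dots,\theta_{2n},\theta$ both $\theta=\theta_{2n+1}$ and $d\theta=-\sum_{j=1}^n\theta_j\wedge\theta_{j+n}$ have constant coefficients, so the linear map
\[
\mathsf E^k:\bwl^{k-1}\hel\times\bwl^{k-2}\hel\to\bwl^k\hel,\qquad \mathsf E^k(\lambda,\mu)\ceq\lambda\wedge\theta+\mu\wedge d\theta,
\]
is a single, point-independent linear map whose image is precisely $\mathcal I^k$. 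A surjective linear map between finite-dimensional spaces admits a linear right inverse $R:\mathcal I^k\to\bwl^{k-1}\hel\times\bwl^{k-2}\hel$; applying this fixed $R$ pointwise to the $\mathcal I^k$-valued section $\eta$ yields smooth, compactly supported forms $\lambda\in C^\infty_c(O,\bwl^{k-1}\hel)$ and $\mu\in C^\infty_c(O,\bwl^{k-2}\hel)$ with $\eta=\lambda\wedge\theta+\mu\wedge d\theta$ \emph{globally} on $O$. This globalization is the main obstacle: it is exactly the passage from the pointwise membership $\eta(p)\in\mathcal I^k$ to a smooth global decomposition, and it is where the constancy of $\mathcal I^k$ is essential (without it one could only decompose $\eta$ locally and would have to patch with a partition of unity).

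Once this decomposition is available the conclusion is immediate. By linearity,
\[
\int_S\eta=\int_S\lambda\wedge\theta+\int_S\mu\wedge d\theta,
\]
and both summands vanish by~\eqref{eq_integralinulli}. Here the integrals are well defined and finite because $\lambda\wedge\theta$ and $\mu\wedge d\theta$ are continuous with compact support while $S$ has locally finite measure, and — recalling from Remark~\ref{rem_C1HC1dimensionebassa} that $S$ is a Euclidean $C^1$ submanifold — they are the classical integrals of differential forms over $S$, so that~\eqref{eq_integralinulli} applies verbatim. Therefore $\int_S\eta=0$, hence $\int_S\tilde\omega_1=\int_S\tilde\omega_2$, which is the desired independence of the representative.
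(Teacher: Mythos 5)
Your proposal is correct and follows essentially the same route as the paper, which presents Proposition~\ref{prop_integrazionebassa} precisely as a consequence of the two vanishing identities in~\eqref{eq_integralinulli}. Your additional step — using the left-invariance (constancy) of $\mathcal I^k$ and a linear right inverse of $(\lambda,\mu)\mapsto\lambda\wedge\theta+\mu\wedge d\theta$ to pass from pointwise membership $\eta(p)\in\mathcal I^k$ to a global smooth compactly supported decomposition — is a valid way of filling in a detail the paper leaves implicit.
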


Proposition~\ref{prop_integrazionebassa} implies that the following definition is well-posed. 

\begin{defi}
\label{defcorlow}
Let $O \subset \he^n$ be an open set and $S$ be an oriented $C^1_\he$ submanifold of dimension $1 \leq k \leq n$ such that $\s^k\res S$ is locally finite in $O$\footnote{By Remark~\ref{rem_C1HC1dimensionebassa}, the measure $\s^k\res S$ is locally finite in $O$ if and only if the $k$-dimensional Euclidean Hausdorff measure on $S$ is locally finite in $O$.}. If $ \omega \in \D^k_\he(O)$ we define
\[
\int_S \omega \ceq \int_S \tilde\omega
\]
for any representative $\tilde\omega\in C^\infty_c ( O, \bwl^k \hel)$ of the equivalence class $\omega$ in the quotient $\D_\H^k(O)=C^\infty_c ( O, {\bwl^k \hel}/{\mathcal{I}^k})$.

\noindent We denote by $\curr{S} \in \D_{\he,k}(O)$ the  Heisenberg current  associated with $S$ and defined by
\begin{equation}\label{eq_integrazionedimensionebassa}
\curr{S}(\omega) \ceq \int_S \omega\qquad \forall\: \omega \in \D^k_\he(O).
\end{equation}
\end{defi}

Let us now consider the case of submanifolds of low codimension; we use the following definition of orientability, see also~\cite{Canarecci_orientabilita}.

\begin{defi}\label{defi_orientabilecodimbassa}
Let $S$ be a $C^1_\he$ submanifold of codimension $1 \leq k \leq n$. We say that $S$ is orientable if we can define $t^\he_S$ (or, equivalently, $n^\he_S$) continuously on the whole $S$.
\end{defi}

We will use the following lemma to define Heisenberg currents canonically associated with $C^1_\he$-regular oriented submanifold.
\begin{lem}[{\cite[Lemma 3.31]{vittone}}]
\label{eqcurr}
Let $n \geq 1$ and $1 \leq k \leq n$ be integers and let $O\subset\he^n$ be open. Then for every oriented $C^1$  submanifold $S\subset O$ of codimension $k$ which is also  $C_\he^1$ regular and such that $\s^{Q-k}\res S$ is locally finite in $O$, we have 
\[
\int_S \langle t^\he_S|\omega   \rangle d\s^{Q-k}=C_{n,k}\int_S \omega\qquad\forall\:\omega\in\D^{2n+1-k}_\he(O).
\]
where $C_{n,k}$ is as in Remark \ref{rem_cnk}.
\end{lem}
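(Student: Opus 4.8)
The plan is to localize to a single graph chart, translate both integrals into integrals over the parameter domain, and then reduce the identity to a pointwise, form-free comparison of geometric densities.

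\emph{Reduction to a pointwise identity.} By a partition of unity subordinate to the charts of Theorem~\ref{fond1}, I may assume $\omega$ is supported where $S$ is an orthogonal graph: $S\cap U=\{f=0\}=gr_\phi$ for a continuous $\phi\colon A\se\W\to\V$ with graph map $\Phi$, the splitting $\he^n=\W\cdot\V$ being orthogonal with $\V$ horizontal and $k$-dimensional. Set $m\ceq 2n+1-k$. Since $S$ is also a Euclidean $C^1$ submanifold, it carries a continuous Euclidean-unit tangent $m$-vector $\tau_S$ and one has the classical formula $\int_S\omega=\int_S\langle \tau_S|\omega\rangle\,d\mathcal H^m_E$, with $\mathcal H^m_E$ the Euclidean surface measure, together with the classical area formula $\mathcal H^m_E\res S=\Phi_\#(J_E\Phi\cdot\mathcal H^m_E\res\W)$, $J_E\Phi$ being the Euclidean $m$-Jacobian of $\Phi$. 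Feeding the intrinsic area formula~\eqref{eq_formulaareaFSSC} into the left-hand side and the classical one into the right-hand side, the common factor $C_{n,k}$ cancels and it suffices to prove that, at every $p=\Phi(\xi)\in S\cap U$,
\begin{equation}\label{eq_puntuale}
\langle t^\he_S(p)|\omega(p)\rangle\,\frac{|\nabla_\he f_1\wedge\cdots\wedge\nabla_\he f_k|(p)}{\Delta(p)}=\langle \tau_S(p)|\omega(p)\rangle\,J_E\Phi(\xi).
\end{equation}

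\emph{The algebraic core.} Here I would exploit that $\omega$ is a Heisenberg form. As $m\ge n+1$, we have $\omega\in\mathcal J^m$, so $\omega\wedge\theta=0$; writing $\omega=\omega'+\theta\wedge\omega''$ with $\omega'\in\bwl^m\hel_1$ and $\omega''\in\bwl^{m-1}\hel_1$, and using that $\lambda\mapsto\lambda\wedge\theta$ is injective on $\bwl^m\hel_1$ (since $m\le 2n$), forces $\omega'=0$, hence $\omega=\theta\wedge\omega''$ with $\omega''$ horizontal. Therefore $\langle v|\omega\rangle$ depends only on the component of the $m$-vector $v$ of the form (horizontal $(m-1)$-vector)$\,\wedge\,T$. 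Now $t^\he_S=\tau^\he_S\wedge T$ with $\tau^\he_S$ orienting the horizontal orthogonal complement of $\spa\{\nabla_\he f_1,\dots,\nabla_\he f_k\}$, so $\langle t^\he_S|\omega\rangle=\pm\langle\tau^\he_S|\omega''\rangle$. On the other hand, using $df_j=d_\he f_j+(Tf_j)\theta$ one sees that the classical tangent space $\ker df_p$ is spanned by that same horizontal complement together with a single vector of the form $w^0+T$ ($w^0$ horizontal); consequently the $\wedge\,T$-part of $\tau_S$ is parallel to $\tau^\he_S$, and choosing orientations compatibly gives $\langle t^\he_S|\omega\rangle=N\,\langle\tau_S|\omega\rangle$ for a scalar $N$ equal to the Euclidean norm of the (unnormalized) tangent $m$-vector built from a Heisenberg-orthonormal frame of $\ker df_p$.

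\emph{Matching the geometric factors.} Substituting $\langle t^\he_S|\omega\rangle=N\langle\tau_S|\omega\rangle$ into~\eqref{eq_puntuale}, the form $\omega$ disappears and the claim reduces to the purely geometric identity
\[
N\,\frac{|\nabla_\he f_1\wedge\cdots\wedge\nabla_\he f_k|}{\Delta}=J_E\Phi .
\]
I would verify this by a direct computation in the orthogonal splitting $\he^n=\W\cdot\V$: differentiating $\Phi(\xi)=\xi\cdot\phi(\xi)$ produces an explicit tangent frame whose Euclidean Gram determinant is $J_E\Phi$, while $\Delta=|\det[v_if_j]|$ and $|\nabla_\he f_1\wedge\cdots\wedge\nabla_\he f_k|$ are read off the horizontal gradients, and $N$ measures precisely the discrepancy between the Heisenberg and Euclidean inner products on the horizontal layer induced by the shear $X_i=\partial_{x_i}-\tfrac{y_i}2\partial_t$, $Y_i=\partial_{y_i}+\tfrac{x_i}2\partial_t$. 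I expect this last constant-matching to be the main obstacle: the algebraic reduction of the previous step is forced by the defining relation $\omega\wedge\theta=0$ of Heisenberg forms, whereas checking $N|\nabla_\he f_1\wedge\cdots\wedge\nabla_\he f_k|/\Delta=J_E\Phi$ is where the precise normalizations hidden in~\eqref{eq_formulaareaFSSC} and in the Euclidean Jacobian must be reconciled. Integrating~\eqref{eq_puntuale} over $A$ against $\mathcal H^m_E\res\W$, the two area formulas convert the two sides into $C_{n,k}^{-1}\int_S\langle t^\he_S|\omega\rangle\,d\s^{Q-k}$ and $\int_S\omega$ respectively, and a partition of unity yields the assertion.
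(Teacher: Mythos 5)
There is a structural point to make first: the paper does not prove Lemma~\ref{eqcurr} at all --- it imports it verbatim from \cite[Lemma 3.31]{vittone} --- so your proposal has to stand on its own rather than be measured against an internal argument. As a strategy it is sound, and its ``soft'' part is correct: localizing by a partition of unity to a chart of Theorem~\ref{fond1}, converting $\int_S\langle t^\he_S|\omega\rangle\,d\s^{Q-k}$ via the intrinsic area formula~\eqref{eq_formulaareaFSSC} and $\int_S\omega$ via the Euclidean area formula so that $C_{n,k}$ cancels is exactly the right reduction; the algebraic step is also correct, since for $m=2n+1-k\geq n+1$ every $\omega\in\mathcal J^m$ satisfies $\omega\wedge\theta=0$, hence $\omega=\theta\wedge\omega''$ with $\omega''$ horizontal, the pairing $\langle\,\cdot\,|\omega\rangle$ only sees the part of a tangent $m$-vector of the form (horizontal $(m-1)$-vector)$\,\wedge\,T$, and this yields $\langle t^\he_S|\omega\rangle=N\langle\tau_S|\omega\rangle$ with $N=|e_1\wedge\dots\wedge e_{2n-k}\wedge(w^0+T)|_E$ as you describe.

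The genuine gap is that the entire quantitative content of the lemma is concentrated in the identity
\[
N\,\frac{|\nabla_\he f_1\wedge\cdots\wedge\nabla_\he f_k|}{\Delta}=J_E\Phi,
\]
which you state, correctly flag as ``the main obstacle'', and then do not prove: everything before it is linear algebra that is insensitive to normalizations, while this identity is where the definition of $\Delta$, the precise density in~\eqref{eq_formulaareaFSSC}, and the discrepancy between the Euclidean and Heisenberg inner products (the shear $X_i=\partial_{x_i}-\tfrac{y_i}{2}\partial_t$, $Y_i=\partial_{y_i}+\tfrac{x_i}{2}\partial_t$) must all be reconciled, and the asserted equality of the lemma fails if any factor is off. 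The identity is in fact true, and a workable route is the one implicit in your setup: first justify that $\Phi$ is $C^1$ (this is not automatic from Theorem~\ref{fond1}, where $\phi$ is merely continuous; it follows because $\Delta\neq0$ means the Euclidean tangent $T_pS$ is transverse to $\ker d\pi_\W=\spa\{v_1(p),\dots,v_k(p)\}$, so $\pi_\W|_S$ is a $C^1$ diffeomorphism onto its image and $\Phi$ is its inverse); then $J_E\Phi=1/J_E\bigl(d\pi_\W|_{T_pS}\bigr)$, and the claim becomes
\[
\bigl|d\pi_\W(e_1)\wedge\dots\wedge d\pi_\W(e_{2n-k})\wedge d\pi_\W(w^0+T)\bigr|_E=\frac{\Delta}{|\nabla_\he f_1\wedge\cdots\wedge\nabla_\he f_k|},
\]
a nontrivial pointwise computation (it checks out, for instance, on tilted vertical planes in $\he^1$, but the general case needs the full argument). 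As written, your proof stops exactly at the step that distinguishes this lemma from a formal manipulation; a complete write-up must either carry out that computation or, as the paper does, invoke \cite[Lemma 3.31]{vittone}. A smaller point to pin down is the sign convention: you need the given Euclidean orientation $\tau_S$ and the chosen $t^\he_S$ to be compatible, i.e.\ $N>0$, otherwise the equality only holds up to sign.
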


\begin{defi}
Let $O\subset \he^n$ be an open set and $S$ be an oriented $C^1_\he$ submanifold of codimension $1 \leq k \leq n$ such that the measure $\s^{Q-k}\res S$ is locally finite in $O$. For every $\omega\in\D^{2n+1-k}_\he(O)$ we define
\[
\int_S\omega \ceq \frac{1}{C_{n,k}}\int_S \langle t^\he_S|\omega   \rangle d\s^{Q-k},
\]
where  $C_{n,k}$ is as in Remark \ref{rem_cnk}.
The  Heisenberg current $\curr{S} \in \mathcal{D}_{\he,2n+1-k}(O)$ associated with $S$ is 
\[
\curr{S}(\omega) \ceq \int_S\omega
\qquad\forall\:\omega\in\D^{2n+1-k}_\he(O).
\] 
\end{defi}

Lemma \ref{eqcurr} states that, when $S$ is both $C^1$ smooth and $C^1_\he$ regular of codimension $k\leq n$, then  the Heisenberg current associated with $S$ coincide with the usual Euclidean one on $\D^{2n+1-k}_\he$.

\begin{lem}
\label{approx}
Let $O \subset \he^n$ be an open set, $S \subset O$ be a $C^1_\he$ submanifold of codimension $1 \leq k \leq n$ such that $\s^{Q-k}\res S$ is locally finite in $O$. Let $\curr{S} \in \mathcal{D}_{\he,2n+1-k}(O)$ be the associated current. Then for each $p \in S$ there exist an open set $U$ with $p \in U$ and a sequence $(S_h)_{h \in \mathbb{N}}$ of smooth $C^1_\he$ submanifolds of codimension $k$ with locally finite measure on $O$ such that, for every $\omega \in \mathcal{D}^{2n+1-k}_\he(U)$,
\[
\curr{S_h}(\omega) \xrightarrow{h \to +\infty} \curr{S}(\omega).
\]
\end{lem}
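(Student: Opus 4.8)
The plan is to realise the approximating submanifolds as nearby level sets of a single, well-chosen defining function, and then to push both currents down to a \emph{fixed} parameter domain via the area formula~\eqref{eq_formulaareaFSSC}, where the convergence becomes an elementary limit.

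First I would apply Theorem~\ref{fond1} at $p$. This produces an orthogonal splitting $\he^n=\W\cdot\V$ with $\V$ horizontal and $k$-dimensional, relatively open sets $A\se\W$ and $B\se\V$ with $p\in U\ceq A\cdot B$, a continuous $\phi\colon A\to B$ with graph map $\Phi$, and a global defining function $f\in C^1_\he(\he^n,\R^k)\cap C^\infty(\he^n\setminus\{f=0\},\R^k)$ with $\nabla_\he f_1\wedge\cdots\wedge\nabla_\he f_k\neq0$ on $U$ and $S\cap U=\{f=0\}=\Phi(A)$. Crucially, the same theorem furnishes smooth maps $\phi_h\colon\W\to\V$ with $\phi_h\to\phi$ uniformly on $A$ and $gr_{\phi_h}=\{f=(\tfrac1h,0,\dots,0)\}$. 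I would set $S_h\ceq gr_{\phi_h}\cap U$, with graph map $\Phi_h$. For $h$ large the value $(\tfrac1h,0,\dots,0)$ avoids $\{f=0\}$, where $f$ is smooth and has full-rank (horizontal, hence Euclidean) differential, so each $S_h$ is a smooth embedded submanifold; since $\nabla_\he f_1\wedge\cdots\wedge\nabla_\he f_k\neq0$, the $S_h$ are also $C^1_\he$-regular of codimension $k$, and~\eqref{eq_formulaareaFSSC} shows their measure is locally finite. I orient every $S_h$ so that $n^\he_{S_h}=(\nabla_\he f_1\wedge\cdots\wedge\nabla_\he f_k)/|\nabla_\he f_1\wedge\cdots\wedge\nabla_\he f_k|$, i.e. by the same (up to sign) expression orienting $S$.

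The key observation is that the defining function $\tilde f\ceq f-(\tfrac1h,0,\dots,0)$ of $S_h$ satisfies $\nabla_\he\tilde f_j=\nabla_\he f_j$ and $v_i\tilde f_j=v_if_j$, so both the area density $|\nabla_\he f_1\wedge\cdots\wedge\nabla_\he f_k|/\Delta$ and the tangent vector $t^\he=*\,(\nabla_\he f_1\wedge\cdots\wedge\nabla_\he f_k)/|\nabla_\he f_1\wedge\cdots\wedge\nabla_\he f_k|$ are, \emph{as functions on $\he^n$}, identical for $S$ and for every $S_h$. I would therefore introduce the single function
\[
F(q)\ceq\Big\langle *\tfrac{\nabla_\he f_1\wedge\cdots\wedge\nabla_\he f_k}{|\nabla_\he f_1\wedge\cdots\wedge\nabla_\he f_k|}(q)\,\Big|\,\omega(q)\Big\rangle\,\tfrac{|\nabla_\he f_1\wedge\cdots\wedge\nabla_\he f_k|}{\Delta}(q),\qquad q\in U,
\]
which is continuous on $U$ because $\nabla_\he f$ is continuous and nonvanishing, $\Delta\neq0$ (Theorem~\ref{fond1}), and $\omega$ is smooth; note also that $F$ vanishes off $\mathrm{supp}\,\omega$. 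Combining the definition of the Heisenberg current with~\eqref{eq_formulaareaFSSC} applied to $S$ and to each $S_h$ then gives
\[
\curr{S}(\omega)=\int_{\W}F(\Phi(\xi))\,d\mathcal H^{2n+1-k}(\xi),\qquad\curr{S_h}(\omega)=\int_{\W}F(\Phi_h(\xi))\,d\mathcal H^{2n+1-k}(\xi).
\]

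Finally I would pass to the limit. Since $\pi_\W\circ\Phi=\pi_\W\circ\Phi_h=\mathrm{id}$ and $F$ vanishes off $\mathrm{supp}\,\omega$, both integrands are supported in the single fixed compact set $K'\ceq\pi_\W(\mathrm{supp}\,\omega)\se A$, which has finite $\mathcal H^{2n+1-k}$-measure; moreover $\Phi(K')$ is a compact subset of $U$, so for $h$ large $\Phi_h(K')$ lies in a fixed compact subset of $U$ on which $F$ is uniformly continuous. The uniform convergence $\phi_h\to\phi$ on $A$ yields $\Phi_h\to\Phi$ uniformly on $K'$, and uniform continuity of $F$ upgrades this to $F\circ\Phi_h\to F\circ\Phi$ uniformly on $K'$; integrating over the finite-measure set $K'$ gives $\curr{S_h}(\omega)\to\curr{S}(\omega)$. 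The one point that demands care — and the main obstacle — is precisely the bookkeeping that exhibits the two currents as integrals of the \emph{same} continuous function $F$ against the \emph{same} Euclidean measure, differing only through the evaluation maps $\Phi$ and $\Phi_h$: this rests on taking $S_h$ as level sets of the fixed $f$ (so that gradients, Jacobians and orientations agree verbatim) and on the identities $\pi_\W\circ\Phi=\pi_\W\circ\Phi_h=\mathrm{id}$, which confine both integrands to the common compact domain $K'$.
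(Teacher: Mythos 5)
Your proposal is correct and follows essentially the same route as the paper's proof: both realise the approximants as the level sets $S_h=\{f=(\tfrac1h,0,\dots,0)\}$ supplied by Theorem~\ref{fond1}, push the currents down to the fixed parameter domain via the area formula~\eqref{eq_formulaareaFSSC}, and pass to the limit using the uniform convergence $\Phi_h\to\Phi$. Your observation that the area density and the tangent vector are literally the same function of $q$ for $S$ and every $S_h$ (since $\nabla_\he(f_j-\delta_{1j}/h)=\nabla_\he f_j$) merely streamlines the paper's list of uniform-convergence observations, several of which thereby become trivial.
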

\begin{proof}
By Theorem \ref{fond1} for each point $p \in S$ there exists an open set $U$, a function $f=(f_1,...,f_k) \in C^1_\he(\he^n,\R^k)\cap C^\infty(\he^n\setminus\{f=0\},\R^k)$ such that $\nabla_\he f_1 \wedge \cdots \wedge \nabla_\he f_k \neq 0$ on $\he^n$, an orthogonal splitting $\he^n=\W \cdot \V$ and a map $\phi: E \subset \W \to \V$ such that
\[
S \cap U=\lbrace x \in U :f(x)=0 \rbrace=\lbrace \xi \cdot \phi(\xi) :\xi \in E \rbrace
\]
The smooth submanifolds $S_h \ceq \left \lbrace f=(\frac{1}{h},0,...,0) \right\rbrace$ satisfy (possibly restricting the subsets $U$ and $E$)
\[
S_h \cap U=\left\lbrace x \in U :f(x)=\left(\tfrac{1}{h},0,...,0\right) \right\rbrace=\lbrace \xi \cdot \phi_h(\xi) :\xi \in E \rbrace 
\]
for suitable smooth maps $\phi_h:E \to \mathbb{V}$ that converge uniformly to $\phi$ on $E$. For $\omega \in \mathcal{D}^{2n+1-k}_\he(U)$ we have  
\[
\curr{S_h}(\omega)=\frac{1}{C_{n,k}}\int_{S_h}\langle t^\he_{S_h}(q)|\omega(q) \rangle d\s^{Q-k}(q)=\frac{1}{C_{n,k}}\int_{S_h \cap U}\langle t^\he_{S_h}(q)|\omega(q) \rangle d\s^{Q-k}(q).
\]
We define $\Phi_h(\xi) \ceq \xi \cdot \phi_h(\xi)$ and $\Delta_h(p)$ as
\[
\Delta_h(p) \ceq \left|	\det\left[v_i \left(f_j-\frac{\delta_{1j}}{h}\right)(p)\right]_{1 \leq i,j \leq k}\right|
\]
where  $v_i\in\hel_1$ are those determined  in Theorem \ref{fond1}. By the same Theorem we get
\begin{align*}
\curr{S_h}(\omega)&=\frac{1}{C_{n,k}}\int_{S_h \cap U}\langle t^\he_{S_h}(q)|\omega(q) \rangle d\s^{Q-k}(q)=\\
&=\int_{E} \langle t^\he_{S_h}(\Phi_h(\xi))|\omega(\Phi_h(\xi))\rangle \frac{|\nabla_\he \left(f_1-\frac{1}{h}\right)\wedge \nabla_\he f_2\wedge \cdots \wedge \nabla_\he f_k |}{\Delta_h}	(\Phi_h(\xi))  d\xi
\end{align*}
Now we make the following observations:
\begin{itemize}
\item[(a)] Since $\phi_h \xrightarrow{h \to +\infty} \phi$ uniformly on $E$, then $\Phi_h \xrightarrow{h \to +\infty} \Phi$ (where $\Phi(\xi)\ceq \xi \cdot \phi(\xi)$) uniformly on $E$.
\item[(b)] $\nabla_\he \left( f_1-\frac{1}{h} \right)=\nabla_\he f_1$ and $f \in C^1_\he(U,\R^k)$; in particular, possibly restricting $E$ and  $U$,
\[
\left( \nabla_\he \left(f_1-\frac{1}{h}\right) \wedge  \cdots \wedge \nabla_\he f_k \right) \circ \Phi_h \xrightarrow{h \to +\infty}\left( \nabla_\he f_1  \wedge \cdots \wedge \nabla_\he f_k \right) \circ \Phi
\]
uniformly on $E$.
\item[(c)] Also $t^\he_{S_h} \circ \Phi_h \xrightarrow{h \to +\infty}t^\he_{S} \circ \Phi$ uniformly on $E$ because
\begin{align*}
\lim_{h \to +\infty} t^\he_{S_h}(\Phi_h(\xi))&=*\left(\lim_{h \to +\infty} n^\he_{S_h}(\Phi_h(\xi))\right)
\\
&=*\left( \lim_{h \to +\infty}\frac{\nabla_\he \left(f_1-\frac{1}{h}\right) \wedge  \cdots \wedge \nabla_\he f_k }{|\nabla_\he \left(f_1-\frac{1}{h}\right) \wedge  \cdots \wedge \nabla_\he f_k |}(\Phi_h(\xi))\right)\\
&=*\left( \lim_{h \to +\infty} \frac{\nabla_\he f_1 \wedge  \cdots \wedge \nabla_\he f_k }{|\nabla_\he f_1  \wedge  \cdots \wedge \nabla_\he f_k |}(\Phi_h(\xi))\right)\\
&=*\left(\frac{\nabla_\he f_1 \wedge  \cdots \wedge \nabla_\he f_k }{|\nabla_\he f_1  \wedge  \cdots \wedge \nabla_\he f_k |}(\Phi(\xi))\right)\\
&=*(n^\he_S (\Phi(\xi)))\\
&=t^\he_S(\Phi(\xi)).
\end{align*}
\item[(d)] Since $\omega$ is smooth, then $\omega \circ \Phi_h \xrightarrow{h \to +\infty} \omega \circ \Phi$ uniformly on $E$.
\item[(e)] By definition of $\Delta_h$ (given above) and $\Delta$ (as in Theorem~\ref{fond1}), $\Delta_h \circ \Phi \xrightarrow{h \to +\infty} \Delta \circ \Phi$ uniformly on $E$.
\item[(f)] There exists $C>0$ such that $\Delta_h\geq C$ and $\Delta\geq C$.
\end{itemize}

With these observations in mind we get 
\begin{align*}
\lim_{h \to +\infty}\curr{S_h}(\omega)=
& \lim_{h \to +\infty}\int_{E} \langle t^\he_{S_h}(\Phi_h(\xi))|\omega(\Phi_h(\xi))\rangle \frac{|\nabla_\he \left(f_1-\frac{1}{h}\right)\wedge \cdots \wedge \nabla_\he f_k |}{\Delta_h}(\Phi_h(\xi))d\xi 
\\
=&\int_{E} \langle t^\he_{S}(\Phi(\xi))|\omega(\Phi(\xi))\rangle \frac{|\nabla_\he f_1\wedge \cdots \wedge \nabla_\he f_k |}{\Delta}(\Phi(\xi))d\xi
\\
=&\frac{1}{C_{n,k}}\int_{S\cap U}\langle t^\he_{S}(q)|\omega(q) \rangle d\s^{Q-k}(q)\\
=&\curr{S}(\omega),
\end{align*}
as desired.
\end{proof}

\begin{rem}
Lemma~\ref{approx} shows that it is possible to (locally) approximate Heisenberg currents associated with $C^1_\he$ regular submanifolds by (currents associated with) smooth submanifolds. In Lemmata~\ref{approx2} and~\ref{approx2_crit} we will do the analogue for $C^1_\he$ regular submanifolds with boundary.
\end{rem}

\begin{lem}\label{lem_stessovalore}
Let $1 \leq k \leq n$ and $m \geq 0$ be integers, $O \se \H^n$ be an open set. Let $d_1,d_2$ be two left invariant, homogeneous and rotationally invariant distances on $\H^n$ and let $\s^m_1,\s^m_2$ be the associated spherical Hausdorff measures; denote by $C^1_{n,k}$ and $C^2_{n,k}$ the constants provided by Lemma \ref{eqcurr}. Let $S \se O$ be an oriented $k$-codimensional $C^1_\H$-regular submanifold such that $\s^{Q-k}_1 \res S$ and $\s^{Q-k}_2 \res S$ are locally finite on $O$; then,
\[
\frac{1}{C^1_{n,k}}\int_S \langle t^\H_S|\omega \rangle d\s^{Q-k}_1=\frac{1}{C^2_{n,k}} \int_S \langle t^\H_S|\omega \rangle d \s^{Q-k}_2.
\]
for every $\omega \in \D^{2n+1-k}_\H(O)$.
\end{lem}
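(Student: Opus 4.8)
The plan is to reduce the asserted identity to the distance-\emph{independence} of a single geometric measure, so that neither the Hodge/normal data nor the form ever really interact with the choice of $d$. The starting observation is that the Heisenberg tangent vector $t^\he_S$ plays no role in the difficulty: by Definition~\ref{defsup} it is obtained from the horizontal gradients $\nabla_\he f_1,\dots,\nabla_\he f_k$ of a defining function, and these depend only on the $C^1_\he$ structure of $S$, not on the distance. Hence the integrand $\langle t^\he_S|\omega\rangle$ is a fixed continuous function on $S$, the same for $d_1$ and $d_2$. It therefore suffices to show that the two normalized measures $\tfrac{1}{C^1_{n,k}}\s^{Q-k}_1\res S$ and $\tfrac{1}{C^2_{n,k}}\s^{Q-k}_2\res S$ coincide; once this is known, integrating the common integrand against equal measures gives the claim.

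Since this is a local statement, I would fix $p\in S$ and invoke Theorem~\ref{fond1} to produce the \emph{intrinsic} data near $p$: an orthogonal splitting $\H^n=\W\cdot\V$ (orthogonality being taken with respect to the fixed inner product on $\hel$, hence distance-free), a neighbourhood $U=A\cdot B$, the graph map $\Phi$ of $\phi\colon A\to B$, a defining function $f$ with $\nabla_\he f_1\wedge\cdots\wedge\nabla_\he f_k\neq0$, and the Jacobian factor $\Delta$. None of these objects depends on the choice of distance. The crucial point is that the area formula~\eqref{eq_formulaareaFSSC} holds for \emph{each} $d_i$ with its own constant $C^i_{n,k}$ but with the \emph{same} right-hand measure, so that for $i=1,2$
\[
\frac{1}{C^i_{n,k}}\,\s^{Q-k}_i\res(S\cap U)=\Phi_\#\!\left(\Big(\tfrac{|\nabla_\he f_1\wedge\cdots\wedge\nabla_\he f_k|}{\Delta}\circ\Phi\Big)\mathcal H^{2n+1-k}_E\res\W\right).
\]
The right-hand side is manifestly independent of $i$, whence the two normalized measures agree on $S\cap U$. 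Integrating $\langle t^\he_S|\omega\rangle$ then proves the identity for $\omega$ supported in $U$, and a partition of unity subordinate to a finite cover of the compact set $\mathrm{supp}\,\omega\cap S$ yields the global statement; the local finiteness hypotheses guarantee that every integral involved is finite.

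The only genuine subtlety, and the step I would treat with care, is the simultaneous use of the area formula for two distances: one must be sure that the \emph{same} splitting, defining function and factor $\Delta$ may be kept fixed while only $C^i_{n,k}$ varies with $d_i$. This is legitimate because the decomposition and $\Delta$ are determined by the horizontal differential of $f$, and because Theorem~\ref{fond1} asserts that $C_{n,k}$ is a true constant depending only on $n,k$ and the distance (Remark~\ref{rem_cnk}). If one preferred to avoid this form of the area formula, the same conclusion follows from the approximation of Lemma~\ref{approx}: the smooth submanifolds $S_h$ there are built from $f$ alone, so by Lemma~\ref{eqcurr} each $\curr{S_h}(\omega)$ equals the distance-free Euclidean integral $\int_{S_h}\omega$, and passing to the limit under both distances gives the equality of the two values of $\curr{S}(\omega)$ computed with $d_1$ and $d_2$.
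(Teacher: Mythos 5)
Your proposal is correct, and its main route is genuinely different from the paper's proof. The paper argues by approximation: near each point it takes the smooth level sets $S_h=\{f=(\tfrac1h,0,\dots,0)\}$ provided by Lemma \ref{approx} (a distance-free construction), applies Lemma \ref{eqcurr} to each smooth $S_h$ so that \emph{both} normalized integrals equal the Euclidean, manifestly distance-independent, integral $\int_{S_h}\omega$, lets $h\to+\infty$ to get the local identity for $j=1,2$, and then globalizes with a partition of unity --- exactly the fallback you sketch in your closing sentences. Your primary argument is more direct and proves more: via the area formula \eqref{eq_formulaareaFSSC} the two normalized measures $\tfrac{1}{C^i_{n,k}}\,\s^{Q-k}_i\res S$ coincide \emph{as measures}, so the identity holds for any bounded Borel integrand, not only for $\langle t^\H_S|\omega\rangle$ with $\omega$ a Rumin form, and no limiting process is needed. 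The price is the subtlety you correctly isolate: Theorem \ref{fond1} is phrased existentially, so invoking it for $d_1$ and $d_2$ with the \emph{same} splitting, $f$, $\phi$ and $\Delta$ requires knowing that the area formula holds for every admissible graph representation, with only $C_{n,k}$ depending on the distance; this is what the results behind Remark \ref{rem_cnk} give, and what the distance-free construction in Lemma \ref{fsmooth} supplies. It is worth noting that the paper's own proof rests on the same kind of implicit observation --- the open set $U$ and the sequence $(S_h)_h$ of Lemma \ref{approx} must serve simultaneously for $d_1$ and $d_2$, which again follows from inspecting that construction rather than from the bare statement --- so your proof is on an equal footing of rigor, and its conclusion is somewhat stronger.
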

\begin{proof}
By Lemma \ref{approx} for every $p \in S$ there exist an open set $U$ with $p \in U$ and a sequence $(S_h)_{h \in \N}$ of smooth and $C^1_\H$ $k$-codimensional submanifolds such that, for every $\omega \in \D^{2n+1-k}_\H(U)$ 
\begin{equation}\label{eq_ccc1}
\int_{S_h}\omega \xrightarrow{h \to +\infty}\frac{1}{C^j_{n,k}}\int_S \langle t^\H_S|\omega\rangle d\s^{Q-k}_j.
\end{equation}
for $j=1,2$. By Lemma \ref{eqcurr}, for every $h \in \N$ we have  
\[
\int_{S_h}\omega=\frac{1}{C_{n,k}^1}\int_{S_h} \langle t^\H_{S_h}|\omega \rangle d \s_1^{Q-k}=\frac{1}{C_{n,k}^2}\int_{S_h} \langle t^\H_{S_h}|\omega \rangle d \s_2^{Q-k}.
\]
Letting $h \to +\infty$ and using \eqref{eq_ccc1} we get
\begin{equation}\label{eq_ccc4}
\frac{1}{C_{n,k}^1}\int_{S} \langle t^\H_{S}|\omega \rangle d \s_1^{Q-k}=\frac{1}{C_{n,k}^2}\int_{S} \langle t^\H_{S}|\omega \rangle d \s_2^{Q-k}.
\end{equation} 
We want to extend this result for every $\omega \in \D^{2n+1-k}_\H(O)$. For each $p  \in  S$ we just showed that there exists an open set $U_p\subset O$ such that \eqref{eq_ccc4} holds for every $\omega \in \D^{2n+1-k}_\H(U_p)$.
We extract from the family $(U_p)_{p \in S}$ a countable (or, possibly, finite) sub-family  $(U_i)_{i \in \mathbb{N}}$ such that $S \subset \bigcup_{i \in \mathbb{N}}U_i$.
We fix a partition of the unity, i.e., functions $\zeta_i\in C^\infty_c(U_i), i\in\N,$ such that 
\begin{equation}\label{eq_ccc5}
0\leq\zeta_i\leq 1\qquad\text{and}\qquad\sum_{i \in \mathbb{N} }\zeta_i=1\text{ on }S
\end{equation}
It is not restrictive to assume that the covering $(U_i)_i$ of $S$ is locally finite, so that the sum in~\eqref{eq_ccc5} is well defined.
Let $\omega \in \mathcal{D}^{2n-k+1}_\he(O)$ be fixed. Then
\begin{align*}
\frac{1}{C_{n,k}^1}\int_{S} \langle t^\H_{S}|\omega \rangle d \s_1^{Q-k}&=\frac{1}{C_{n,k}^1}\sum_{i\in \N}\int_{S \cap U_i} \langle t^\H_{S}|\zeta_i\omega \rangle d \s_1^{Q-k}
\\&= \frac{1}{C_{n,k}^2}\sum_{i\in \N}\int_{S \cap U_i} \langle t^\H_{S}|\zeta_i\omega \rangle d \s_2^{Q-k}=\frac{1}{C_{n,k}^2}\int_{S} \langle t^\H_{S}|\omega \rangle d \s_2^{Q-k}.
\end{align*}
\end{proof}
\begin{rem}
Lemma \ref{lem_stessovalore} shows that Heisenberg currents associated with low codimensinal submanifolds do not depend on the choice of the left invariant, homogeneous and rotationally invariant distance $d$ (and, consequently, from the associated spherical Hausdorff measure). 
\end{rem}

\section{\texorpdfstring{$C^1_\he$}{C1H}-regular submanifolds with boundary}

After providing the definition of $C^1_\he$-regular submanifolds with boundary we will give equivalent characterizations first in the case of low and non-critical codimension $k<n$, then in the (easy) case of low dimension (i.e., codimension $k\geq n+1$), and eventually in the case of critical codimension $k=n$. Then we will introduce and discuss the natural way to induce an orientation on the boundary.

\begin{defi}\label{def_supbound}
Let $1 \leq k \leq 2n+1$ and $O \se \H^n$ be an open set. We say that $S \se \H^n$ is a \emph{$k$-codimensional (or $(2n+1-k)$-dimensional)  $C^1_\he$-regular submanifold with boundary in $O$} if the following conditions hold:
\begin{enumerate}[label=(\arabic*)]
\item $O \cap S$ is a non empty $k$-codimensional $C^1_\he$-regular submanifold;
\item $O \cap \pp S$ is a non empty $k+1$-codimensional $C^1_\he$-regular submanifold;
\item for all $p \in O \cap \pp S$ there exist a neighbourhood $U$ of $p$ and a  $k$-codimensional $C^1_\H$-regular submanifold $S' \se U$ such that $U \cap \overline{S} \se S'$ and, for every $r>0$,  $U(p,r) \cap (S'\setminus \overline{S})\neq \emptyset$.
\end{enumerate}
We will omit reference to the open set $O$ in case the latter is clear from the context.
\end{defi}

\begin{teo}\label{teo_equivsupbound}
Let $1 \leq k \leq n-1$, $O \se \he^n$ be an open set and $S \se O$ such that $O \cap \pp S \neq \emptyset$. Then the following statements are equivalent:
\begin{enumerate}
\item[(i)] $S$ is a $k$-codimensional $C^1_\H$-regular submanifold with boundary in $O$;
\item[(ii)] the statement $(1)$ from Definition \ref{def_supbound} holds and for all $p \in O \cap \pp S$ there exist a neighbourhood $U$ of $p$ and functions $f_1,\dots,f_{k+1}\in C^1_\H(U)$ such that
\begin{align*}
& \text{$\nabla_\H f_1,\dots,\nabla_\H f_{k+1}$ are linearly independent in $U$,}\\
& U \cap S=\lbrace q \in U : f_1(q)=\cdots=f_k(q)=0,f_{k+1}(q)>0 \rbrace,\\
& f_{k+1}(p)=0.
\end{align*}
\end{enumerate}
\end{teo}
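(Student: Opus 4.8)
The plan is to prove the two implications separately, reducing both to a single local \emph{accumulation claim}: if $S'\ceq\{f_1=\dots=f_k=0\}$ is a $k$-codimensional $C^1_\he$-regular submanifold near a point $\bar p$ and $g\in C^1_\he$ satisfies $g(\bar p)=0$ and $\nabla_\he g(\bar p)\notin\spa(\nabla_\he f_1(\bar p),\dots,\nabla_\he f_k(\bar p))$, then every ball $U(\bar p,r)$ contains points of $S'$ where $g>0$ and points of $S'$ where $g<0$. I would prove this claim by translating $\bar p$ to the origin: by Proposition~\ref{teoten} the tangent cone $T^\he_0 S'$ equals the tangent group $\W\ceq\ker d_\he(f_1,\dots,f_k)_0$, and the non-containment hypothesis means $d_\he g_0$ does not vanish on $\W$, so I may fix $v\in\W$ with $d_\he g_0(v)>0$. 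By definition of the tangent cone there are $x_h\in S'$ and $r_h\to+\infty$ with $\delta_{r_h}x_h\to v$, hence $x_h\to0$, and the local uniform convergence in the definition of P-differentiability of $g$ gives
\[
r_h\,g(x_h)=\frac{g\big(\delta_{1/r_h}(\delta_{r_h}x_h)\big)-g(0)}{1/r_h}\longrightarrow d_\he g_0(v)>0,
\]
so $g(x_h)>0$ for $h$ large; replacing $v$ by $-v$ produces the negative side.

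For $(ii)\Rightarrow(i)$ I fix $p\in O\cap\pp S$ and the functions $f_1,\dots,f_{k+1}$ on a neighbourhood $U$. Statement $(1)$ is assumed, and $\{f_1=\dots=f_{k+1}=0\}\cap U$ is $(k+1)$-codimensional $C^1_\he$-regular because the gradients are independent. Applying the accumulation claim with $g=f_{k+1}$ at each point of $\{f_1=\dots=f_{k+1}=0\}$, continuity of the $f_i$ yields $\overline S\cap U=\{f_1=\dots=f_k=0,\ f_{k+1}\geq0\}$, so $\pp S\cap U=\{f_1=\dots=f_{k+1}=0\}$ is $(k+1)$-codimensional $C^1_\he$-regular, giving $(2)$; and $(3)$ follows by choosing $S'=\{f_1=\dots=f_k=0\}\cap U$, since $\overline S\cap U\se S'$ while the negative side $\{f_{k+1}<0\}\cap S'$ provided by the claim meets every $U(p,r)$.

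For $(i)\Rightarrow(ii)$, statement $(1)$ is part of $(i)$. Fixing $p\in O\cap\pp S$, condition $(3)$ furnishes $U$ and a $k$-codimensional $C^1_\he$-regular $S'\se U$ with $U\cap\overline S\se S'$, and (Definition~\ref{defsup}, shrinking $U$) functions $f_1,\dots,f_k$ with $S'\cap U=\{f_1=\dots=f_k=0\}$ and independent gradients. By $(2)$, $\pp S\cap U$ is $(k+1)$-codimensional $C^1_\he$-regular, with some defining tuple $g_1,\dots,g_{k+1}$. Since $\pp S\se S'$, inclusion of tangent cones and Proposition~\ref{teoten} give $\ker d_\he(g_1,\dots,g_{k+1})_p\se\ker d_\he(f_1,\dots,f_k)_p$, hence $\spa(\nabla_\he f_i(p))\se\spa(\nabla_\he g_j(p))$, a $k$-plane inside a $(k+1)$-plane; I then pick a unit $e$ in the latter orthogonal to the former, write $e=\sum_jc_j\nabla_\he g_j(p)$ and set $f_{k+1}\ceq\sum_jc_jg_j$, so that $f_{k+1}$ vanishes on $\pp S$, $\nabla_\he f_{k+1}(p)=e$ is independent of the $\nabla_\he f_i$, and $f_{k+1}(p)=0$. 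To finish I must identify the correct sign: the set $\{f_1=\dots=f_k=0,\ f_{k+1}=0\}$ is $(k+1)$-codimensional $C^1_\he$-regular, contains $\pp S$, and shares its tangent group at $p$ by construction, so writing both as intrinsic graphs over a common complementary subgroup and invoking uniqueness of the graph (\cite[Theorem~1.4]{vittone}) they coincide near $p$; thus $S'\setminus\pp S=(\{f_{k+1}>0\}\cap S')\sqcup(\{f_{k+1}<0\}\cap S')$. Using the accumulation claim, the local separation of $S'$ by the $C^1_\he$-regular hypersurface $\{f_{k+1}=0\}\cap S'$ into these two connected pieces, and the fact that both $S\cap U$ and $S'\setminus\overline S$ are nonempty near $p$ (the latter by $(3)$), I conclude that $S\cap U$ is exactly one of the two sides, which after possibly replacing $f_{k+1}$ by $-f_{k+1}$ is $\{f_1=\dots=f_k=0,\ f_{k+1}>0\}$.

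The main obstacle is exactly the fine local structure hidden in the accumulation claim and in the two-sidedness of $S'$ across $\pp S$: since intrinsic graphs are merely continuous and may be Euclidean-fractal, one cannot appeal to the Euclidean implicit function theorem, and must instead extract the tangency of $S'$ to its tangent group from Proposition~\ref{teoten} and Lemma~\ref{lem_approxtan}, combine it with the P-differentiability estimate above, and rely on the uniqueness of intrinsic graphs (\cite[Theorem~1.4]{vittone}) to pin down $\pp S$ and to establish that $\{f_{k+1}=0\}\cap S'$ locally disconnects $S'$ into precisely the two sides.
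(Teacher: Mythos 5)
Your ``accumulation claim'' is correct as stated and proved: the combination of Proposition~\ref{teoten} (to identify the tangent cone with $\ker d_\he(f_1,\dots,f_k)_0$), the fact that $d_\he g_0$ cannot vanish on that kernel, and the locally uniform convergence in the definition of P-differentiability does yield points of $S'$ on both sides of $\{g=0\}$ in every ball. This is a legitimate substitute for the tool the paper uses at the corresponding places, namely the openness of the map $(f_1,\dots,f_{k+1})$ (from \cite[Lemma 2.10]{jnv}, invoked in Remark~\ref{rem_bound} and in the proof of $(ii)\Rightarrow(i)$). With it, your $(ii)\Rightarrow(i)$ direction is complete and essentially parallel to the paper's. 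In $(i)\Rightarrow(ii)$, your construction of $f_{k+1}$ as a linear combination $\sum_j c_j g_j$ (the paper simply takes $f_{k+1}=g_{\bar\jmath}$ for a suitable index $\bar\jmath$) and the identification $U\cap\pp S=\{f_1=\cdots=f_{k+1}=0\}$ via intrinsic graphs over a common splitting are also sound and match the paper's argument.

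The gap is in the last step of $(i)\Rightarrow(ii)$: you assert, but do not prove, that $\{f_{k+1}=0\}\cap S'$ \emph{locally disconnects $S'$ into precisely the two connected pieces} $S'_\pm=\{\pm f_{k+1}>0\}\cap S'$, and the tools you invoke for this cannot deliver it. The accumulation claim only shows that both sides are nonempty in every ball around $p$; uniqueness of intrinsic graphs (\cite[Theorem 1.4]{vittone}) only identifies $\pp S$ with $\{f_1=\cdots=f_{k+1}=0\}$ --- neither says anything about \emph{connectedness} of the sides. Since $S'$ is merely a topological manifold (possibly Euclidean-fractal) and $f_{k+1}|_{S'}$ is merely continuous, the set $\{f_{k+1}>0\}\cap S'$ could a priori have many connected components accumulating at $p$; in that case the open-and-closed argument (whose hypotheses --- relative openness of $U\cap S$ in $S'$ by invariance of domain, and relative closedness in $U\cap(S'\setminus\pp S)$ --- you also leave implicit) would only show that $U\cap S$ is \emph{some} union of components, and condition (3) of Definition~\ref{def_supbound} would only exclude the case $U\cap S=U\cap(S'\setminus\pp S)$; one could not conclude $U\cap S=S'_+$. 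The missing ingredient, which is exactly what the paper uses here, is the \emph{parametrized} implicit function theorem \cite[Lemma 2.10]{jnv}: after shrinking $U$ it produces a homeomorphism $\Phi:A\times(-t_0,t_0)\to\{q\in S' : f_{k+1}(q)\in(-t_0,t_0)\}$ with $f_{k+1}(\Phi(w,t))=t$ and $A\se\W$ connected, so that $S'_+\cong A\times(0,t_0)$ and $S'_-\cong A\times(-t_0,0)$ are connected and the three-case analysis ($U\cap S=S'_+$, $=S'_-$, or $=S'_+\cup S'_-$) closes the proof. Without this foliated product structure your argument does not go through; with it, it becomes the paper's proof.
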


\begin{rem}\label{rem_bound}
If $(ii)$ of Theorem \ref{teo_equivsupbound} holds, then
\[
U \cap \overline{S}=\lbrace q \in U : f_1(q)=\cdots=f_k(q)=0,f_{k+1}(q) \geq 0 \rbrace.
\]
In fact, the left hand side in the previous equality is clearly contained in the right hand one; the opposite inclusion can be easily proved taking into account that the map $(f_1,\dots,f_{k+1}):U\to\R^k$ is open (see for instance \cite[Lemma 2.10]{jnv}). In particular,
\[
U \cap \pp S=\lbrace q \in U : f_1(q)=\cdots=f_k(q)=0,f_{k+1}(q) = 0 \rbrace
\]
and $O\cap \partial S$ is a $C^1_\H$-regular submanifold of codimension $k+1$.
\end{rem}

\begin{rem}\label{rem_wlogCinfty}
By a standard mollification procedure (see for instance in \cite[Proposition~2.10]{vittone}), the functions $f_1,\dots,f_{k+1}$ in $(ii)$ of Theorem~\ref{teo_equivsupbound} can be chosen to be of class $C^\infty$ on the open set $U\setminus\{q\in U : f_1(q)=\dots=f_k(q)=0\}$. 
\end{rem}

\begin{proof}[Proof of Theorem \ref{teo_equivsupbound}]
Let us prove $(ii)\Rightarrow (i)$: since properties (1) and (2) in Definition \ref{def_supbound} hold, respectively, by assumption and by Remark~\ref{rem_bound}, we have only to prove property (3).  Fix $p \in O \cap \pp S$. Let $U,f_1,\dots,f_{k+1}$ be as in $(ii)$. We define
\[
S' \ceq \lbrace q \in U : f_1(q)=\cdots=f_{k}(q)=0 \rbrace.
\]
It is clear that $S'$ is a $k$-codimensional $C^1_\H$-regular submanifold and $U \cap \overline{S}\se S'$. We must prove that, for every $r>0$, $U(p,r) \cap (S' \setminus \overline{S}) \neq \emptyset$. The map $(f_1,\dots,f_{k+1}):U \to \R^k$ is open, so for every $r>0$ there exists a $\ve>0$ such that
\[
B(0,\ve)=\lbrace a \in \R^{k+1}:|a|_{\R^{k+1}}<\ve \rbrace \se (f_1,\dots,f_{k+1})(U(p,r)),
\]
since, by Remark \ref{rem_bound}, $f_1(p)=\dots=f_{k+1}(p)=0$.
Therefore, there exists $q \in U(p,r)$ such that $(f_1,\dots,f_{k+1})(q)=(0,\dots,0,-\tfrac \ve 2)$, i.e., $q \in U(p,r) \cap (S' \setminus \overline{S})$, that is $U(p,r) \cap (S' \setminus \overline{S}) \neq \emptyset$.

We now prove $(i) \Rightarrow (ii)$. Fix $p \in O \cap \pp S$ and let $S'$ be as in $(3)$ of Definition~\ref{def_supbound}; then, there exist a neighbourhood $U$ of $p$ and functions $f_1,\dots,f_k \in C^1_\H(U)$ such that, setting
\[
S'=\lbrace q \in U : f_1(q)=\cdots=f_k(q)=0 \rbrace,
\]
we have
\begin{itemize}
\item $\nabla_\H f_1,\dots,\nabla_\H f_{k}$ are linearly independent on $U$,
\item $U \cap \overline{S} \se S'$,
\item for every $r>0$, $U(p,r) \cap (S' \setminus \overline{S}) \neq \emptyset$. 
\end{itemize}
By property $(2)$, possibly reducing $U$, there exist $g_1,\dots, g_{k+1}\in C^1_\H(U)$ such that
\begin{itemize}
\item $\nabla_\H g_1,\dots,\nabla_\H g_{k+1}$ are linearly independent on $U$,
\item $U \cap \pp S=\lbrace q \in U : g_1(q)=\dots=g_{k+1}(q)=0 \rbrace$.
\end{itemize}
Let $\bar\jmath\in \lbrace 1,...,k+1 \rbrace$ be such that $\nabla_\H f_1(p),\dots,\nabla_\H f_k(p), \nabla_\H g_{\bar\jmath}(p)$ are linearly independent. Setting $f_{k+1}=g_{\bar\jmath}$ and, possibly, reducing $U$, $\nabla_\H f_1,\dots, \nabla_\H f_{k+1}$ are linearly independent on $U$. We claim that
\begin{align}
U \cap \pp  S &=\lbrace q \in U : f_1(q)=\cdots=f_{k+1}(q)=0 \rbrace.\label{eq_claim}
\intertext{The inclusion}
U \cap \pp S &\se\lbrace q \in U : f_1(q)=\cdots=f_{k+1}(q)=0 \rbrace\nonumber\\
&=S' \cap \lbrace q \in U : f_{k+1}(q)=0\rbrace\nonumber
\end{align}
holds because  $U \cap \pp S \se U \cap \overline{S} \se U \cap S'$ and, if $q \in U \cap \pp S$, then $f_{k+1}(q)=g_{\bar j}(q)=0$.
For the opposite inclusion 
\[
U \cap \pp S \supseteq \lbrace q \in U : f_1(q)=\cdots=f_{k+1}(q)=0 \rbrace
\]
The set $\Delta \ceq \lbrace q \in U : f_1(q)=\cdots=f_{k+1}(q)=0 \rbrace$ is a $C^1_\H$ submanifold of codimension $k+1$. Notice that $p\in \Delta\cap \partial S$ and
\[
T_p^\H \Delta=T_p^\H \pp S=T_p^\H S' \cap (\nabla_\H g_{\bar \jmath}(p))^\perp.
\]
Possibly restricting $U$, by Theorem~\ref{fond1} there exist a splitting $\H^n=\W \cdot \V$ (with $\V$ horizontal and of dimension $k+1$), open subsets $A\subset\W, B\subset\V$ and continuous maps $\phi,\psi:A \to B$ such that $U=A\cdot B$ and
\[
\quad gr_\psi=U \cap \pp S,\qquad
gr_\phi=U \cap\Delta.
\]
The inclusion $gr_\psi \se gr_\phi$ proved above implies that $\psi=\phi$ and the claim~\eqref{eq_claim} follows. Moreover, possibly reducing $A$ and  $U$, by the Implicit Function Theorem \cite[Lemma 2.10]{jnv}, there exist a real number $t_0>0$ and a continuous map $\vp:A \times(-t_0,t_0) \to \V$ such that 
\begin{itemize}
\item the graph map $\Phi(w,t) \ceq w \cdot \vp(w,t)$ is an homeomorphism between $A \times (-t_0,t_0)$ and $\lbrace q \in S' \st f_{k+1}(q) \in (-t_0,t_0)\rbrace$,
\item $f_{k+1}(w \cdot \vp(w,t))=t$ for all $w \in A$ and $t \in (-t_0,t_0)$.
\end{itemize}
Without loss of generality we can suppose that $A$ is a connected open subset of $\W$. Moreover, possibly reducing $U,A$ and $t_0$, we can suppose that
\begin{itemize}
\item $U \cap \pp S=\Phi(A \times \lbrace 0 \rbrace)$,
\item $U \cap S'=\Phi(A \times (t_0,t_0))$.
\end{itemize}
We observe that $U \cap (S' \setminus \pp S)$ has two connected components  $S'_+$, $S'_-$ defined by
\begin{align*}
&S'_+ \ceq \lbrace q \in U : f_1(q)=\cdots=f_k(q)=0,f_{k+1}(q)>0 \rbrace\\
&S'_- \ceq \lbrace q \in U : f_1(q)=\cdots=f_k(q)=0,f_{k+1}(q)<0 \rbrace.
\end{align*}
By definition $U \cap S$ is a $C^1_\H$-regular submanifold: it follows that it is locally connected, relatively open and relatively closed in  $U \cap (S' \setminus \pp S)=S'_- \sqcup S'_+$. This implies that there are only three possibilities
\begin{enumerate}
\item[(a)] either $U \cap S=S'_+$,
\item[(b)] or $U \cap S=S'_-$,
\item[(c)] or $U \cap S=S'_+ \cup S'_-$.
\end{enumerate} 
If (a) holds, then the proof is accomplished.  If (b) holds, it is enough to replace $f_{k+1}$ with $-f_{k+1}$. Case (c) would lead to  $U \cap (S' \setminus \overline{S})=\emptyset$, a contradiction. This concludes the proof.
\end{proof}

The following property of low-dimensional $C^1_\H$ submanifolds with boundary is an easy consequence of~\cite[Theorem 3.5]{franchi}, see Remark~\ref{rem_C1HC1dimensionebassa}.

\begin{lem}\label{lem_lowdimsubwbound}
Let $1 \leq k \leq n$, $O \se \he^n$ be an open set and let $S$ be a $k$-dimensional $C^1_\H$-regular submanifold with boundary in $O$. Then, $O \cap \overline{S}$ is a Euclidean $k$-dimensional $C^1$-regular submanifold with boundary such that $T \overline{S} \se H\H^n$ and $T \pp S \se H\H^n$.
\end{lem}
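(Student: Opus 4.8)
The plan is to transfer everything to the Euclidean $C^1$ category by means of Remark~\ref{rem_C1HC1dimensionebassa}, and then to argue on purely topological grounds that condition (3) of Definition~\ref{def_supbound} forces $\overline S$ to look locally like a half-space inside the auxiliary submanifold $S'$. Since $1\le k\le n$, Remark~\ref{rem_C1HC1dimensionebassa} applies to the $k$-dimensional submanifold $S$, to the $(k-1)$-dimensional submanifold $\pp S$, and to the $k$-dimensional submanifold $S'$ furnished by condition (3): all three are Euclidean $C^1$ submanifolds whose tangent spaces lie in $H\H^n$. In particular the assertion $T\pp S\se H\H^n$ is then immediate, so the substantial content is to show that $O\cap\overline S$ is a Euclidean $C^1$ submanifold \emph{with boundary}.

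First I would dispose of the points of $S$. Because $S$ is a $C^1_\he$-regular submanifold it is locally relatively closed, so every $p\in O\cap S$ admits a neighbourhood $U$ with $U\cap\overline S=U\cap S$; there $\overline S$ coincides with the boundaryless Euclidean $C^1$ submanifold $S$. Equivalently, $\pp S$ is closed in $\overline S$ and $S=\overline S\setminus\pp S$ is relatively open, so only the points of $\pp S$ need work.

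Next I would fix $p\in O\cap\pp S$ and take $U$ and $S'$ as in condition (3), so that $U\cap\overline S\se S'$ and $U(p,r)\cap(S'\setminus\overline S)\neq\emptyset$ for every $r>0$. Inside $U$ both $\pp S$ and $S$ are contained in the Euclidean $C^1$ $k$-manifold $S'$; since a $C^1$ curve in $\pp S$ (resp.\ in $S$) is also a curve in $S'$, one gets $T_q\pp S\se T_qS'$ along $\pp S$, and, as $S$ and $S'$ have the same dimension $k$, the set $S\cap U$ is relatively open in $S'$. Thus $\pp S$ is a $(k-1)$-dimensional Euclidean $C^1$ submanifold of the $k$-manifold $S'$, and after choosing $C^1$ coordinates on $S'$ near $p$ one may straighten $S'\cong W\se\R^k$ and $\pp S\cong\Gamma=\{x_k=0\}$. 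In these coordinates $\overline S\cap U$ becomes a closed set $F\se W$ containing $\Gamma$, with $F\setminus\Gamma$ open in $W$ (openness of $S$ in $S'$) and relatively closed in $W\setminus\Gamma$ (closedness of $\overline S$); hence $F\setminus\Gamma$ is a union of connected components of the two half-spaces $\{x_k>0\}$ and $\{x_k<0\}$.

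The case analysis would then mirror the end of the proof of Theorem~\ref{teo_equivsupbound}: the possibility $F\setminus\Gamma=\emptyset$ is excluded because $p\in\pp S=\overline S\setminus S$ is a limit of points of $S$; the possibility $F\setminus\Gamma=\{x_k>0\}\cup\{x_k<0\}$ would give $S'\cap U\se\overline S$, contradicting $U(p,r)\cap(S'\setminus\overline S)\neq\emptyset$; so, up to the sign of the last coordinate, $F=\{x_k\ge0\}$ and $\overline S\cap U$ is $C^1$-diffeomorphic to a half-space with boundary $\Gamma=\pp S$. This exhibits $O\cap\overline S$ as a Euclidean $C^1$ submanifold with boundary, whose tangent space at each point equals $TS'\se H\H^n$; together with $T\pp S\se H\H^n$ this completes the argument. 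I expect the only delicate point to be the bookkeeping of the three topological facts feeding the case analysis — that $S$ is open in $S'$, that $\overline S$ is closed, and that $p$ is genuinely a limit both of $S$ and of $S'\setminus\overline S$ — rather than any quantitative estimate.
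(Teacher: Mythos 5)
Your proof is correct and is essentially the paper's (implicit) argument: the paper offers no written proof, stating only that the lemma is an easy consequence of Remark~\ref{rem_C1HC1dimensionebassa}, and your proposal is exactly the natural expansion of that remark. You reduce $S$, $\pp S$ and the extension $S'$ to the Euclidean $C^1$ category via Remark~\ref{rem_C1HC1dimensionebassa} and then run the open/closed connected-component case analysis inside $S'$ --- the same trichotomy, with condition (3) of Definition~\ref{def_supbound} ruling out the two-sided case, that the paper itself uses at the end of the proof of Theorem~\ref{teo_equivsupbound}.
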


In Lemma~\ref{lem_lowdimsubwbound} and in the following, when we say that $S$ is both a Euclidean $C^1$ and $C^1_\H$ submanifold with boundary in an open set $O$ we mean that $O \cap \overline{S}$ is both a $C^1$-regular Euclidean submanifold with boundary (in the classical sense) and $S$ is a $C^1_\H$-regular submanifold with boundary in $O$ as in Definition \ref{def_supbound}.

The following Lemma shows a property of critical dimensional $C^1_\H$ submanifolds with boundary, which will be crucial in the definition of the induced orientation on the boundary in Subsection \ref{sub_boundor}.

\begin{lem}\label{lem_2cc}
Let $O \se \H^n$ be an open set and let $S$ be a $(n+1)$-dimensional $C^1_\H$ submanifold with boundary in $O$; denote by $S'$ a local extension of $S$ as in Definition \ref{def_supbound}. Then for every $p \in O \cap \pp S$ there exists a fundamental system of neighborhoods $\mathcal{N}$ such that, for every neighbourhood $N \in \mathcal{N}$, $(N \cap S') \setminus \pp S$ has two connected components and one of the two is $N \cap S$.
\end{lem}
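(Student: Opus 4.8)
The plan is to reduce the statement, through the intrinsic graph parametrization of $S'$, to the fact that $\pp S$ is a Euclidean $C^1$ hypersurface sitting inside $S'$ which separates it locally. Fix $p\in O\cap\pp S$ and, after a left translation, assume $p=0$. Here the codimension of $S'$ is the critical one $k=n$, so Theorem~\ref{fond1} provides an orthogonal splitting $\H^n=\W\cdot\V$ with $\V$ horizontal and $n$-dimensional; hence $\W$ has dimension $n+1$ and $\W=W_0\oplus\R T$, where $W_0\ceq\W\cap\hel_1$ is an $n$-dimensional horizontal subspace. Moreover $\W=T^g_\he S'(0)$ and the graph map $\Phi\colon A\se\W\to S'\cap U$, which coincides with $(\pi_\W|_{S'\cap U})^{-1}$, is a homeomorphism; thus connectedness questions on $S'$ transfer to the parameter set $A$.

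First I would record that $S$ is relatively clopen in $(S'\cap U)\setminus\pp S$. It is relatively closed because $S=\overline S\cap[(S'\cap U)\setminus\pp S]$ and $\overline S$ is closed; it is relatively open because at any interior point $q\in S$ one has $T^\he_q S\se T^\he_q S'$ by monotonicity of the tangent cone under inclusion, so the two tangent groups coincide by Proposition~\ref{teoten} and equality of dimension, and the uniqueness of the intrinsic graph in Theorem~\ref{fond1} forces $S$ and $S'$ to agree near $q$. Consequently $S\cap N$ is a union of connected components of $(S'\cap N)\setminus\pp S$ for every neighbourhood $N$, and $(S'\cap N)\setminus\pp S=(S\cap N)\sqcup[(S'\setminus\overline S)\cap N]$; since $0\in\pp S=\overline S\setminus S$, the set $S\cap N$ is nonempty, and by Definition~\ref{def_supbound}(3) so is $(S'\setminus\overline S)\cap N$. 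It therefore suffices to produce a fundamental system of neighbourhoods $N$ for which each of these two pieces is connected.

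The heart of the matter is to understand $\Gamma\ceq\pi_\W(\pp S\cap U)=\Phi^{-1}(\pp S\cap U)$ inside $\W$. By Remark~\ref{rem_C1HC1dimensionebassa} the $n$-dimensional manifold $\pp S$ is Euclidean $C^1$ and tangent to the horizontal distribution, so every $C^1$ curve in $\pp S$ is horizontal; differentiating the relation $f\circ\sigma\equiv 0$ along such curves (with $f$ a defining map of $S'$) gives $T_0\pp S\se\ker d_\he f_0\cap\hel_1=W_0$, hence $T_0\pp S=W_0$. The key observation is that, although $\pi_\W|_{S'}$ is merely a homeomorphism and may be Euclidean-irregular, composing it with the \emph{linear} projection $\pi_{W_0}\colon\W\to W_0$ along $\R T$ produces a Euclidean-smooth map: writing a generic point as $q=w_0\cdot\tau\cdot v$ with $w_0\in W_0$, $\tau\in\R T$, $v\in\V$ and using that horizontal coordinates add under the group law, one checks
\[
\pi_{W_0}\bigl(\pi_\W(q)\bigr)=\pi^h_{W_0}\bigl(\pi(q)\bigr),
\]
where $\pi(x,y,t)=(x,y)$ and $\pi^h_{W_0}\colon\hel_1\to W_0$ is the linear projection along $\V$. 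The kernel of $\pi^h_{W_0}\circ\pi$ is $\V\oplus\R T$, which is transverse to $T_0\pp S=W_0$; since $\pp S$ is Euclidean $C^1$, the map restricts to a $C^1$ diffeomorphism of a neighbourhood of $0$ in $\pp S$ onto a neighbourhood of $0$ in $W_0$. As $\pi_\W|_{\pp S}$ is a homeomorphism onto $\Gamma$, it follows that $\pi_{W_0}|_\Gamma$ is a local homeomorphism onto a neighbourhood of $0$ in $W_0$; that is, near $0$ the set $\Gamma$ is the graph $\{(\xi,\gamma(\xi)):\xi\in D\}$ of a continuous function $\gamma\colon D\se W_0\to\R T$.

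Finally I would take as $\mathcal N$ the neighbourhoods $N$ with $N\cap S'=\Phi(V)$, where $V=\{(\xi,s):\xi\in D,\ |s-\gamma(\xi)|<\eta\}$ ranges over the box-neighbourhoods of $0$ in $A$ (such $N$ exist because $\Phi(V)$ is relatively open in the relatively closed set $S'\cap U$, and they form a fundamental system at $p$). For such $N$ one has $V\setminus\Gamma=V^+\sqcup V^-$ with $V^\pm\ceq\{(\xi,s)\in V:\pm(s-\gamma(\xi))>0\}$, each homeomorphic to $D\times(0,\eta)$ and hence connected; transporting by $\Phi$, the set $(S'\cap N)\setminus\pp S$ is the disjoint union of the two connected sets $\Phi(V^+)$ and $\Phi(V^-)$. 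By the clopen dichotomy of the second paragraph each of them lies entirely in $S$ or entirely in $S'\setminus\overline S$, and since both $S\cap N$ and $(S'\setminus\overline S)\cap N$ are nonempty, exactly one of them is $N\cap S$ and the other is $(S'\setminus\overline S)\cap N$. This yields the two connected components and identifies $N\cap S$ as one of them. The main obstacle is precisely the graph property of $\Gamma$: one cannot pull the Euclidean $C^1$ structure of $\pp S$ back through $\Phi^{-1}=\pi_\W|_{S'}$, and the whole argument hinges on the algebraic identity $\pi_{W_0}\circ\pi_\W=\pi^h_{W_0}\circ\pi$, which restores smoothness after the extra linear projection and lets the tangency $T_0\pp S=W_0$ do its work.
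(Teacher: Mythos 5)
Your argument is correct, and its skeleton coincides with the paper's: fix $p\in O\cap\pp S$, use the graph homeomorphism $\pi_\W|_{U\cap S'}$ from Theorem~\ref{fond1} to transfer the problem to $\W$, show that $\Gamma=\pi_\W(U\cap\pp S)$ locally disconnects its ambient neighbourhood into exactly two pieces, and conclude via the open/closed dichotomy of $U\cap S$ inside $U\cap(S'\setminus\pp S)$ combined with property (3) of Definition~\ref{def_supbound} (the paper invokes only relative openness at this point, but it is using the same dichotomy you spell out). The genuine divergence is in how the separation property of $\Gamma$ is obtained. The paper restricts the globally smooth (polynomial) map $\pi_\W\colon\H^n\to\W$ to the Euclidean $C^1$ manifold $\pp S$ and uses the transversality $T_q\pp S\cap\spa(v_1(q),\dots,v_n(q))=\{0\}$ (true at $q=p$ since $T_p\pp S=\spa(w_1(p),\dots,w_n(p))$, and nearby by continuity) to conclude that $\Gamma$ is an honest $n$-dimensional $C^1$ hypersurface of $\W\cong\R^{n+1}$, which locally separates. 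You instead compose with the linear projection $\pi_{W_0}$ and exploit the identity $\pi_{W_0}\circ\pi_\W=\pi^h_{W_0}\circ\pi$, which reduces everything to a linear map restricted to $\pp S$; this yields that $\Gamma$ is locally a \emph{continuous} graph over $W_0$ in the direction $T$ — weaker than the paper's $C^1$ conclusion, but entirely sufficient for your box-separation argument, and it has the side benefit of directly recovering the graph description of $\Gamma$ underlying Remark~\ref{rem_soprasotto}. One comment on your closing remark: the claim that ``one cannot pull the Euclidean $C^1$ structure of $\pp S$ back through $\pi_\W|_{S'}$'' identifies a non-issue. The paper never needs to invert $\pi_\W|_{S'}$ differentiably; it only restricts the smooth map $\pi_\W$ to $\pp S$, which is legitimate precisely because $\pp S$ (unlike $S'$) is Euclidean $C^1$ by Remark~\ref{rem_C1HC1dimensionebassa}. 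So your detour through $\pi_{W_0}$ is a valid and slightly more elementary alternative, not a necessary repair.
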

\begin{proof}
Fix $p \in O \cap \pp S$. By Definition \ref{def_supbound} there exist a neighbourhood $U$ of $p$ and a $(n+1)$-dimensional $C^1_\H$ submanifold $S' \se U$ such that $U \cap \overline{S}\se S'$. By Theorem \ref{fond1} we can locally write $S'$ as the intrinsic graph of an  continuous map $\phi:\W \to \V$, where $\W:=T_p^\H S'$ and $\H^n=\W\cdot\V$ is an orthogonal splitting. We also fix orthonormal bases $w_1,\dots,w_n,T$ of the Lie (sub)algebra of $\W$ and $v_1,\dots,v_n$ of the Lie (sub)algebra of $\V$, respectively; observe that, since $\pp S$ has an horizontal $n$-dimensional tangent space, $T^\H_p\pp S=\spa(w_1(p),\dots, w_n(p))$.
Let $\pi_\W:\H^n \to \W$ be the projection defined in Remark~\ref{rem_proiezioniplitting}; we observe that, possibly restricting $U$, the set $\Gamma:=\pi_\W(U \cap \pp S)$ is an $n$-dimensional $C^1$ submanifold in $\W$. This is due to the fact that, up to restricting $U$, $U \cap \pp S$ and $\pi_\W$ are transverse, i.e., for every $q \in U \cap \pp S$ 
\[
T_q \pp S \cap  \ker d\pi_\W=T_q\pp S \cap \spa (v_1(q),\dots, v_n(q))=\{0\};
\]
in fact, the previous equality is true for $q=p$ and, by continuity, it remains true for $q$ close enough to $p$.
Therefore, for every $w \in \Gamma$ there exists a fundamental system of neighborhoods $\mathcal{M}$ for $\pi_\W(p)$ such that, for any   $M \in \mathcal{M}$, $M \setminus \Gamma$ has two connected components. Now $U \cap S$ is a $C^1_\H$-regular submanifold, hence it is relatively open in $U \cap (S' \setminus \pp S)$. This, together with the fact that, up to restricting $U$, $\pi_\W$ is an homeomorphism from $U\cap S'$ onto its image, implies that $\pi_\W(U \cap S)$ has to be either one of the two connected components of $M \setminus \Gamma$ or the whole $M \setminus \Gamma$. Thanks to property $(3)$ in Definition \ref{def_supbound}, one has $\pi_\W(U \cap  S)\neq M \setminus \Gamma$, so that $\pi_\W(U \cap  S)$ is exactly one of the two connected components of $M \setminus \Gamma$. Then the fundamental system of neighbourhoods $\mathcal{N}$ defined by $\mathcal{N}\ceq U\cap \pi_\W^{-1}(\mathcal{M})$ satisfies the requested property.
\end{proof}

\begin{rem}\label{rem_soprasotto}
Under the assumption and notation of Lemma~\ref{lem_2cc}, we observe that the $n$-dimensional $C^1$ submanifold $\Gamma=\pi_\W(U\cap\pp S)$ can be written, locally around $\pi_\W(p)$, as an $n$-dimensional graph in direction $T$. More precisely, let us identify $\W\equiv\R^{n+1}$ by
\[
\R^{n+1}\ni(s_1,\dots,s_n,t)\longleftrightarrow \exp(s_1w_1+\dots+s_nw_n+tT)\in\W;
\] 
then there exists $\gamma\in C^1(\R^n)$ and a relatively open set $A\subset \W$ containing $\pi_\W(p)$ such that
\[
A\cap\Gamma=\{(s_1,\dots,s_n, t)\in A\subset \W\equiv\R^{n+1}:t=\gamma(s_1,\dots,s_n)\}\,.
\]
In particular, we have that either
\begin{equation}\label{eq_soprailbordo}
A\cap\pi_\W(U\cap S)=\{(s_1,\dots,s_n, t)\in A\subset \W\equiv\R^{n+1}:t>\gamma(s_1,\dots,s_n)\}
\end{equation}
or
\begin{equation}\label{eq_sottoilbordo}
A\cap\pi_\W(U\cap S)=\{(s_1,\dots,s_n, t)\in A\subset \W\equiv\R^{n+1}:t<\gamma(s_1,\dots,s_n)\}.
\end{equation}
We will informally say that {\em $S$ lies above its boundary around $p$} if~\eqref{eq_soprailbordo} holds, while we say that {\em $S$ lies below its boundary around $p$} if~\eqref{eq_sottoilbordo} holds. 
It is a boring task to check that, for every $p\in \pp S$, there exists a neighbourhood $V$ of $p$ with the following property: if $S$ lies above (resp., below) its boundary around $p$, then $S$ lies above (resp., below) its boundary also around every $q\in V\cap\pp S$. 
\end{rem}

Finally, the following lemma contains a technical property of $C^1_\H$-regular submanifolds with boundary that holds in the case of critical codimension $k=n$. Such a property will be useful later in the proof of Lemma \ref{approx2_crit}.

\begin{lem}
\label{ghat}
Let $S$ be a $(n+1)$-dimensional $C^1_\he$ submanifold with boundary. Define $\pi:\he^n\to\R^{2n}$ by $\pi(x,y,t) \ceq (x,y)$. Then, for every $p \in \pp S$ there exist an open set $U$ with $p \in U$, a $C^1_\H$-regular submanifold $S'\subset U$ as in (3) of Definition~\ref{def_supbound}, a locally defining function $f\in C^1_\H(U,\R^n)$ for $S'$, a function $\hat{g}:\R^{2n} \to \R^{n}$ and $\delta_1>0$ such that, up to an isometry of $\H^n$,
\begin{enumerate}
\item[(1)] $\hat{g} \in C^1(\R^{2n},\R^n)$,
\item[(2)] $|\nabla \hat{g}|$ is bounded on $\R^{2n}$,
\item[(3)] $(\nabla \hat{g})\circ \pi=\nabla_\he f$ on $\Sigma=U \cap \pp S$,
\item[(4)] $\col[\pp_{x_1}\hat{g}|\cdots|\pp_{x_n}\hat{g}](z)\geq \delta_1 \Id_{n \times n}$ for every $z \in\R^{2n}$ in the sense of quadratic forms,
\item[(5)] $\pi(\Sigma) \se \lbrace \hat{g}=0 \rbrace$,
\end{enumerate}
where in (3) we  identify horizontal vectors with elements in $\R^{2n}$.
\end{lem}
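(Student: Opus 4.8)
The plan is to construct $\hat g$ by prescribing its $1$-jet along the projected boundary $\pi(\Sigma)$, extending it by the Whitney Extension Theorem, and then correcting the extension so that the global conditions (2) and (4) hold.

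First I would fix $p\in\pp S$ and, using Definition \ref{def_supbound}, take the local extension $S'$, an $n$-codimensional $C^1_\he$-regular submanifold with $U\cap\overline S\se S'$. Applying Lemma \ref{fsmooth} to $S'$, up to an isometry of $\he^n$ I obtain a defining function $f\in C^1_\he(\he^n,\R^n)$ for $S'$ with $|\nabla_\he f|$ bounded and $\widehat\nabla_\he f\geq\delta_0\Id$ on $\he^n$ for some $\delta_0>0$; this earmarks the constant needed for (4). Since $\pp S$ has dimension $n\le n$, Lemma \ref{lem_lowdimsubwbound} gives that $\Sigma\ceq U\cap\pp S$ is a Euclidean $C^1$ $n$-submanifold with $T\Sigma\se H\he^n$. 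Under the isometric identification $H_q\he^n\cong\R^{2n}$ in which $X_j\leftrightarrow e_j$ and $Y_j\leftrightarrow e_{n+j}$, one checks $d\pi_q(X_j)=e_j$ and $d\pi_q(Y_j)=e_{n+j}$, so $d\pi_q$ restricted to $H_q\he^n$ is exactly this identification; combined with the injectivity of $\pi|_\Sigma$ coming from Lemma \ref{lem_distlowdim}, it follows that $\pi|_\Sigma$ is a $C^1$-diffeomorphism onto the $n$-dimensional $C^1$ submanifold $\pi(\Sigma)\se\R^{2n}$.

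Next I would prescribe along $\pi(\Sigma)$ the $1$-jet $z=\pi(q)\mapsto\bigl(0,\nabla_\he f(q)\bigr)$, where $\nabla_\he f(q)$ is read in $\R^{2n}$ via the identification above. The crucial point is first-order Whitney compatibility: since the prescribed value is $0$, I must verify that each $\nabla_\he f_i(q)$ is Euclidean-orthogonal to $T_{\pi(q)}\pi(\Sigma)$. This holds because any horizontal $C^1$ curve $\gamma$ in $\Sigma$ lies in $S'=\{f=0\}$, so $f_i(\gamma)\equiv0$ and hence $0=\tfrac{d}{ds}f_i(\gamma)=\langle\nabla_\he f_i(\gamma),\dot\gamma\rangle$ along the horizontal direction $\dot\gamma$; pushing forward by $d\pi$ yields the orthogonality, and the $C^1$ regularity of $\pi(\Sigma)$ promotes it to the required little-$o$ Taylor remainder condition. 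The Whitney Extension Theorem then produces $\hat g_0\in C^1(\R^{2n},\R^n)$ with $\hat g_0=0$ and $\nabla\hat g_0\circ\pi=\nabla_\he f$ on $\Sigma$, giving (1), (3) and (5). Reading off the first $n$ columns, $\col[\pp_{x_1}\hat g_0|\cdots|\pp_{x_n}\hat g_0]=\widehat\nabla_\he f\geq\delta_0\Id$ on $\pi(\Sigma)$, hence $\geq\tfrac{\delta_0}{2}\Id$ on a neighbourhood of $\pi(\Sigma)$ by continuity.

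Finally I would upgrade (2) and (4) to all of $\R^{2n}$ by interpolating $\hat g_0$ with its affine approximation $A(z)\ceq\nabla\hat g_0(\pi(p))\,(z-\pi(p))$ at $\pi(p)$, in the spirit of the globalization step in the proof of Lemma \ref{fsmooth}; note $A$ is affine with bounded gradient and $x$-block $\widehat\nabla_\he f(p)\geq\delta_0\Id$. For small $r>0$ I take a cutoff $\chi_r$ equal to $1$ on $B(\pi(p),r)$, supported in $B(\pi(p),2r)$, with $|\nabla\chi_r|\leq C/r$, and set $\hat g\ceq\chi_r\hat g_0+(1-\chi_r)A$. The $x$-block of $\hat g$ is $\chi_r\col[\pp_{x_j}\hat g_0]+(1-\chi_r)\widehat\nabla_\he f(p)$ plus a cross term from differentiating $\chi_r$, bounded by $\|\hat g_0-A\|_{C^0(B(\pi(p),2r))}\cdot C/r=o(r)\cdot O(1/r)=o(1)$ via Taylor's formula. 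The main obstacle is precisely the control of this cross term, which threatens the positive-definiteness (4): for $r$ small the first two terms are $\geq\tfrac{\delta_0}{2}\Id$ and the error is negligible, so (4) holds globally with $\delta_1\ceq\tfrac{\delta_0}{4}$, while (2) holds since $\hat g$ coincides with the affine $A$ outside $B(\pi(p),2r)$. Shrinking $U$ so that $\pi(U\cap\pp S)\se B(\pi(p),r)$ makes $\hat g=\hat g_0$ near $\pi(\Sigma)$, thereby preserving (1), (3) and (5) and completing the construction.
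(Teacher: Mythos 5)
Your proposal is correct, and its skeleton coincides with the paper's: take $f$ from Lemma \ref{fsmooth}, note that $\pi|_\Sigma$ is a $C^1$ diffeomorphism onto its image, apply the classical Whitney Extension Theorem to the zero function on $\overline{\pi(\Sigma)}$ with prescribed gradient $\nabla_\he f\circ\pi^{-1}$, and then recover (2) and (4) globally by a cutoff interpolation (your interpolation with the affine approximation at $\pi(p)$ is a concrete implementation of the step the paper dispatches with ``a strategy as in the proof of Lemma \ref{fsmooth}''). Where you genuinely diverge is in the verification of the Whitney compatibility condition, which is the analytic heart of the lemma. The paper argues metrically and intrinsically: it picks the point $r\in q\cdot T^\he_q S'$ nearest to $q'$, uses that $\nabla_\he f_j(q)$ annihilates $\pi(q^{-1}\cdot r)$, and closes the estimate with Lemma \ref{lem_approxtan} (first-order approximation of $S'$ by its tangent group) together with Lemma \ref{lem_distlowdim} (the distance $d$ on $\Sigma$ is controlled by the Euclidean distance of the projections). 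You argue Euclideanly: the prescribed gradients are conormal to the $C^1$ submanifold $\pi(\Sigma)$, and the chord between nearby points $z,z'$ of a $C^1$ submanifold deviates from the tangent plane at $z$ by $o(|z'-z|)$, uniformly on compact sets; combined with the boundedness of $\nabla_\he f$ this yields the same little-$o$ estimate, and it isolates neatly the geometric reason the Whitney data are compatible. The trade-off is that your route needs two standard facts the paper's route avoids: the chain rule $\tfrac{d}{ds}(f_i\circ\gamma)=\langle\nabla_\he f_i(\gamma),\dot\gamma\rangle$ for $f_i\in C^1_\he$ along $C^1$ horizontal curves, which is not the Euclidean chain rule and deserves justification (e.g.\ by group mollification, using $W(f\star K_\ve)=(Wf)\star K_\ve$ and passing to the limit, or by observing $T_q\Sigma\subset T^\he_q S'=\ker d_\he f_q$ via Proposition \ref{teoten}, which is in fact how the paper encodes the same orthogonality), and the uniform first-order Taylor estimate for Euclidean $C^1$ submanifolds. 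Also, the Euclidean $C^1$ regularity and horizontality of $\Sigma$ come from Remark \ref{rem_C1HC1dimensionebassa} applied to the $n$-dimensional submanifold $\pp S$, rather than from Lemma \ref{lem_lowdimsubwbound} (which concerns low-dimensional submanifolds with boundary); this is a citation slip, not a gap.
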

\begin{proof}
Fix $p \in \pp S$ and let $U,S'$ be an open neighbourhood of $p$ and an $(n+1)$-dimensional $C^1_\he$ submanifold as in (3) of Definition \ref{def_supbound}. By Lemma \ref{fsmooth}, possibly restricting $U$ there exist a function $f:\he^n \to \R^n$ and $\delta_0>0$ such that, up to an isometry of $\he^n$,
\begin{itemize}
\item $f \in C^1_\he(\he^n,\R^n)$,
\item $|\nabla_\he f|$ is bounded on $\he^n$,
\item $f \in C^\infty(\he^n \setminus \lbrace f=0 \rbrace,\R^n)$,
\item $\widehat \nabla_\he f(q)\geq  \delta_0 \Id_{n \times n}$ for every $q \in \he^n$ in the sense of quadratic forms,
\item $S' \cap U=\lbrace q \in U : f(q)=0\rbrace$.
\end{itemize}
By Remark~\ref{rem_C1HC1dimensionebassa}, $\partial S$ is a Euclidean $C^1$ submanifold whose  tangent space is contained in the horizontal distribution. In particular, $\pp S$ is transversal to the vector field $T$ and, up to reducing $U$,  $\pi|_{U \cap \pp S}$ is a $C^1$ diffeomorphism between $\Sigma=U \cap \pp S$ and its image. We consider, for $1 \leq j \leq n$, the map 
\[
(\nabla_\he f_j) \circ \pi^{-1}: \pi(\Sigma) \to \R^{2n},
\]
where $\nabla_\he f_j(p)\in H_p\he^n$ is identified with $(X_1f_j(p),...,Y_n f_j(p))\in\R^{2n}$. Clearly, $\nabla_\he f_j \circ \pi^{-1}$ is continuous and bounded on $\pi(\Sigma)$. Up to reducing  $U$, we can suppose that  $\nabla_\he f_j \circ \pi^{-1}$ is defined on $\overline{\pi(\Sigma)}$. We use the classical Whitney Extension Theorem to extend the null function on $\overline{\pi(\Sigma)}$ to a $C^1$ map with gradient $\nabla_\he f_j \circ \pi^{-1}$ on $\overline{\pi(\Sigma)}$. In order to use Whitney Extension Theorem we  have to check that for   every $\ve>0$ there exists $\delta>0$ such that
\[
|\langle \nabla_\he f_j \circ \pi^{-1}(z),z-z'\rangle_{\R^{2n}}|\leq \ve |z-z'|_{\R^{2n}}\qquad \text{for every  }z,z' \in \overline{\pi(\Sigma)}\text{ with }|z-z'|_{\R^{2n}}<\delta.
\]
Let $z,z' \in \overline{\pi(\Sigma)}$ be fixed. Since, up to restricting $U$, $\pi^{-1}$ is bijective on $\overline{\pi(\Sigma)}$, there exist unique $q,q' \in \overline\Sigma$ such that $\pi(q)=z$ and $\pi(q')=z'$. We have to prove that for every $\ve>0$ there exist $\delta>0$ such that, if $q,q'\in \overline\Sigma$ and $|\pi(q)-\pi(q')|_{\R^{2n}}<\delta$ then
\begin{equation}\label{ghat1}
|\langle \nabla_\he f_j (q),\pi(q)-\pi(q')\rangle_{\R^{2n}}|\leq \ve |\pi(q)-\pi(q')|_{\R^{2n}}.
\end{equation}
This is equivalent to
\[
|\langle \nabla_\he f_j (q),\pi(q^{-1} \cdot q)\rangle_{\R^{2n}}|\leq \ve |\pi(q^{-1} \cdot q)|_{\R^{2n}}.
\]
Let  $r \in q \cdot T_q^\he  S'$ such that $d(q',q \cdot T_q^\he  S')=d(q',r)$. Then
\begin{align*}
|\langle \nabla_\he f_j (q),\pi(q^{-1} \cdot q')\rangle_{\R^{2n}}|&\leq \underbrace{|\langle \nabla_\he f_j (q),\pi(q^{-1} \cdot r)\rangle_{\R^{2n}}|}_{=0}+|\langle \nabla_\he f_j (q),\pi(r^{-1} \cdot q')\rangle_{\R^{2n}}|
\\
&\leq  \underbrace{| \nabla_\he f_j (q)|_{\R^{2n}}}_{\leq C}\underbrace{|\pi(r^{-1} \cdot q')|_{\R^{2n}} }_{\leq Cd(r,q')}\leq Cd(r,q')
\end{align*}
By Lemma \ref{lem_approxtan} there exist $\delta_2>0$ such that, if $d(q,q')<\delta_2$, then
\[
d(r,q') \leq \ve d(q,q').
\]
By Lemma \ref{lem_distlowdim} there exist a $\delta_1>0$ and a constant $C'>0$ such that, if $d(p,q')<\delta_1$ and $d(p,q)<\delta_1$, then
\[
d(q,q')\leq C'|\pi(q)-\pi(q)'|_{\R^{2n}}.
\]
The claim~\eqref{ghat1} follows and we can use the (classical Euclidean) Whitney Extension Theorem (see e.g \cite[Theorem 6.10]{evansgariepy}) to obtain a function $g_j \in C^1(\R^{2n},\R)$ such that $g_j|_{\overline{\pi (\Sigma)}}=0$ and $(\nabla g_j)\circ\pi=\nabla_\he f_j $ on $\Sigma$. We define the function $g: \R^{2n} \to \R^n$ as $g=(g_1,...,g_n)$. 
The function $g$ satisfies statements (1), (3) and (5) (with $g$ in place of $\hat g$), while it satisfies statements (2) and (4) only in a neighbourhood of $\overline{\pi(\Sigma)}$. To conclude, one can use a  strategy as in the proof of Lemma \ref{fsmooth} to get a function $\hat{g}$ with the desired properties.
\end{proof}

\subsection{Boundary orientation}\label{sub_boundor} In this subsection we show that, given an orientation on a $C^1_\H$-regular submanifold with boundary, there exists a natural way to define an induced orientation on the boundary which will be consistent with Theorem \ref{thm_intro}, in analogy with the classical Euclidean case. In the low dimensional case, because of Lemma \ref{lem_lowdimsubwbound}, we can define the induced orientation on the boundary as usual, so we focus first on the low codimensional case and then on the critical dimensional case. 

\begin{rem}
Let $1 \leq k \leq n$, $O \se \H^n$ be an open set and $S \se \H^n$ a $k$-codimensional $C^1_\H$ submanifold with boundary in $O$. Let $p \in O \cap \pp S$ and $S'$ be the local extension of $S$ as in Definition \ref{def_supbound}. Then the tangent cone $T_p^\H S'$ does not depend on the choice of the local extension $S'$ and we denote it by $T_p^\H S$.
\end{rem}

\begin{lem}\label{lem_fk+1}
Let $1 	\leq k\leq n-1$, $O \se \H^n$ be an open set and $S \se \H^n$ a $k$-codimensional $C^1_\H$ submanifold with boundary in $O$. Let $p \in O \cap \pp S$ and assume there exists an open set $U$ with $p \in U$ and two families of locally defining functions $f_1,\dots,f_{k+1} \in C^1_\H(U)$ and $g_1,\dots,g_{k+1} \in C^1_\H(U)$ for $S$, as in Theorem \ref{teo_equivsupbound}. Then there exists a positive constant $C>0$ such that
\[
(d_\H f_{k+1})_p=C( d_\H g_{k+1})_p \quad \text{ on }T_p^\H S.
\]
\end{lem}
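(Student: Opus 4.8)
The plan is to regard the conclusion as a statement about the two horizontal homomorphisms $(d_\H f_{k+1})_p$ and $(d_\H g_{k+1})_p$ restricted to the subgroup $N\ceq T_p^\H S$, and to show that these restrictions are nonzero, share the same kernel, and select the same side of $S$. First I would identify $N$ and the boundary tangent in terms of the defining functions. Setting $S'_f\ceq\{f_1=\dots=f_k=0\}$ and $S'_g\ceq\{g_1=\dots=g_k=0\}$ near $p$, both are local extensions of $S$ in the sense of Definition~\ref{def_supbound}; hence, by the remark preceding this statement (independence of $T_p^\H S'$ of the extension), Proposition~\ref{teoten}, and the definition of the tangent group in Definition~\ref{defsup},
\[
N=\ker (d_\H(f_1,\dots,f_k))_p=\ker (d_\H(g_1,\dots,g_k))_p.
\]
Similarly, since $U\cap\pp S=\{f_1=\dots=f_{k+1}=0\}=\{g_1=\dots=g_{k+1}=0\}$ by Remark~\ref{rem_bound}, the tangent group $M\ceq T_p^\H\pp S$ satisfies
\[
M=\ker(d_\H(f_1,\dots,f_{k+1}))_p=N\cap\ker (d_\H f_{k+1})_p=\ker\big((d_\H f_{k+1})_p|_N\big),
\]
and the same holds with $g$ in place of $f$. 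The linear independence of $\nabla_\H f_1,\dots,\nabla_\H f_{k+1}$ forces $(d_\H f_{k+1})_p|_N\neq 0$, so $M$ has codimension one in $N$; the same is true for $g$. Two nonzero homomorphisms $N\to\R$ with the same codimension-one kernel $M$ are proportional, so there is $C\neq0$ with $(d_\H f_{k+1})_p=C\,(d_\H g_{k+1})_p$ on $N$, and it remains to prove $C>0$.

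For the sign I would produce an approaching sequence in a suitable direction. Pick $v\in N$ with $(d_\H f_{k+1})_p(v)>0$, which exists because $(d_\H f_{k+1})_p|_N\neq0$. By Proposition~\ref{teoten} the tangent cone of $S'_f$ at $p$ is the whole tangent group $N$, so there are points $q_h\in S'_f$ with $q_h\to p$ and $w_h\ceq\delta_{1/\lambda_h}(p^{-1}\cdot q_h)\to v$ for some $\lambda_h\to0^+$, whence $q_h=p\cdot\delta_{\lambda_h}w_h$. Using that $f_{k+1}\in C^1_\H$ is P-differentiable locally uniformly and $f_{k+1}(p)=0$,
\[
f_{k+1}(q_h)=f_{k+1}(p\cdot\delta_{\lambda_h}w_h)=\lambda_h\big((d_\H f_{k+1})_p(v)+o(1)\big)>0
\]
for $h$ large; since $q_h\in S'_f$, Theorem~\ref{teo_equivsupbound} gives $q_h\in S$, hence also $g_{k+1}(q_h)>0$. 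Applying P-differentiability to $g_{k+1}$ then yields $(d_\H g_{k+1})_p(v)=\lim_h g_{k+1}(q_h)/\lambda_h\geq0$; as $v\notin M=\ker\big((d_\H g_{k+1})_p|_N\big)$ we get $(d_\H g_{k+1})_p(v)>0$, and therefore $C=(d_\H f_{k+1})_p(v)/(d_\H g_{k+1})_p(v)>0$.

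The main obstacle is precisely the determination of the sign of $C$: the proportionality $C\neq0$ is immediate once the kernels are identified, but fixing the sign requires transferring the orientation information encoded in the superlevel-set description of $S$ from $f_{k+1}$ to $g_{k+1}$. The clean way to do this, as above, is to exploit that for a $C^1_\H$ submanifold the tangent cone coincides with the full tangent group, so every direction $v\in N$ is realized by an approaching sequence; the positivity $(d_\H f_{k+1})_p(v)>0$ then automatically forces such a sequence into $S$ rather than into the opposite component $S'_-$, after which the sign of $g_{k+1}$ is read off by P-differentiability.
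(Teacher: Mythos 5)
Your proof is correct, but it implements the blow-up idea differently from the paper, and in a somewhat more elementary way. The paper works at the level of sets: writing $S$, $\pp S$ and the extension as (super)level sets of the rescaled functions $(f_i)_\lambda\ceq\tfrac1\lambda(f_i\circ\delta_\lambda)$, it invokes~\cite[Lemma 2.14 (ii) and (iii)]{jnv} to obtain the local Hausdorff convergences $\lim_{\lambda\to0}\delta_{1/\lambda}(\tau_{p^{-1}}S)=T_p^\H S\cap\lbrace (d_\H f_{k+1})_p>0\rbrace$ and $T_p^\H\pp S=T_p^\H S\cap\lbrace (d_\H f_{k+1})_p=0\rbrace$, repeats this with $g$, and concludes by uniqueness of the blow-up that the two open half-spaces (hence also the two kernels) coincide, which gives $C>0$ in one stroke. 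You instead separate the two ingredients. The kernel identification comes from the intrinsic character of tangent cones alone: Proposition~\ref{teoten} applied to the $(k+1)$-codimensional submanifold $U\cap\pp S$ with its two defining families, together with the Remark preceding the lemma for $N=\ker(d_\H(f_1,\dots,f_k))_p=\ker(d_\H(g_1,\dots,g_k))_p$; this yields $C\neq0$ by linear algebra. The sign is then settled by testing a single direction $v\in N$ with $(d_\H f_{k+1})_p(v)>0$ along an approaching sequence furnished by the very definition of the tangent cone, transferring positivity from $f_{k+1}$ to $g_{k+1}$ via locally uniform P-differentiability and the superlevel-set description of $S$ from Theorem~\ref{teo_equivsupbound}; this step is carried out correctly, and in particular you rightly use $v\notin T_p^\H\pp S=\ker\bigl((d_\H g_{k+1})_p|_N\bigr)$ to upgrade $(d_\H g_{k+1})_p(v)\geq0$ to a strict inequality. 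What each route buys: yours avoids the set-convergence machinery of~\cite{jnv} and the uniqueness-of-blow-up argument, so it is more self-contained; the paper's computation, besides proving the lemma, establishes the stronger geometric fact that $S$ blows up to an open half-space of $T_p^\H S$, and it never uses the unproved Remark preceding the lemma (indeed it essentially justifies that Remark), whereas your kernel step does lean on it --- a legitimate move, since the Remark is asserted in the paper and is needed even to state the lemma, but worth flagging as the one unproved ingredient your argument inherits.
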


\begin{proof}
By Theorem \ref{teo_equivsupbound} and Remark \ref{rem_bound} we have
\begin{align*}
& \text{$\nabla_\H f_1,\dots,\nabla_\H f_{k+1}$ are linearly independent in $U$,}\\
& U \cap S=\lbrace q \in U : f_1(q)=\cdots=f_k(q)=0,f_{k+1}(q)>0 \rbrace,\\
& U \cap \pp S=\lbrace q \in U : f_1(q)=\cdots=f_{k+1}(q)=0 \rbrace,\\
& \text{$\nabla_\H g_1,\dots,\nabla_\H g_{k+1}$ are linearly independent in $U$,}\\
& U \cap S=\lbrace q \in U : g_1(q)=\cdots=g_k(q)=0,g_{k+1}(q)>0 \rbrace,\\
& U \cap \pp S=\lbrace q \in U : g_1(q)=\cdots=g_{k+1}(q)=0 \rbrace.
\end{align*}
We claim that
\[
\lim_{\lambda \to 0}\delta_{1/\lambda}(\tau_{p^{-1}}S)=T_p^\H S \cap \lbrace q \in \H^n : (d_\H f_{k+1})_p(q)>0\rbrace.
\]
in the sense of local Hausdorff convergence of sets. For the sake of brevity, let us assume $p=0$. For $\lambda>0$ we define for $i=1,\dots,k+1$ the maps $(f_i)_\lambda \ceq \tfrac 1 \lambda (f_i \circ \delta_{\lambda})$ and the $C^1_\H$ submanifolds $S_\lambda \ceq \delta_{1/\lambda}( U \cap S)$, $(\pp S)_\lambda \ceq \delta_{1/\lambda}(U \cap \pp S)$, $S'_\lambda \ceq \delta_{1/\lambda}(S')$ where $S'\ceq \lbrace q \in U: f_1(q)=\dots f_k(q)=0\rbrace$. We observe that
\begin{align*}
S_\lambda&=\lbrace q \in \delta_{1/\lambda}(U) : (f_1)_\lambda(q)=\dots=(f_k)_\lambda(q)=0,(f_{k+1})_\lambda(q)>0\rbrace,\\
(\pp S)_\lambda&=\lbrace q \in \delta_{1/\lambda}(U) : (f_1)_\lambda(q)=\dots=(f_{k+1})_\lambda(q)=0\rbrace,\\
S'_\lambda&=\lbrace q \in \delta_{1/\lambda}(U) : (f_1)_\lambda(q)=\dots=(f_k)_\lambda(q)=0\rbrace
\end{align*}
and in particular that
\begin{align*}
(\pp S)_\lambda&=S_\lambda' \cap \lbrace q \in \delta_{1/\lambda}(U) : (f_{k+1})_\lambda(q)=0 \rbrace,\\
S_\lambda&=S'_\lambda \cap \lbrace q \in \delta_{1/\lambda}(U) : (f_{k+1})_\lambda(q)>0 \rbrace.
\end{align*}
Letting $\lambda \to 0$, by \cite[Lemma 2.14 (ii) and (iii)]{jnv} we obtain 
\begin{align*}
T_0^\H \pp S&=T_0^\H S \cap \lbrace q \in \H^n : (d_\H f_{k+1})_0(q)=0\rbrace,\\
\lim_{\lambda \to 0}\delta_{1/\lambda}(S)&=T_0^\H S \cap   \lbrace q \in \H^n : (d_\H f_{k+1})_0(q)>0\rbrace 
\end{align*}
in the sense of local Hausdorff convergence of sets, proving the claim. In the same way we obtain
\begin{align*}
T_0^\H \pp S&=T_0^\H S \cap \lbrace q \in \H^n : (d_\H g_{k+1})_0(q)=0\rbrace,\\
\lim_{\lambda \to 0}\delta_{1/\lambda}(S)&=T_0^\H S \cap   \lbrace q \in \H^n : (d_\H g_{k+1})_0(q)>0\rbrace 
\end{align*}
in the sense of local Hausdorff convergence of sets. By the uniqueness of the blow-up we obtain  
\begin{align*}
T_0^\H S \cap   \lbrace q \in \H^n : (d_\H f_{k+1})_0(q)>0\rbrace&=T_0^\H S \cap   \lbrace q \in \H^n : (d_\H g_{k+1})_0(q)>0\rbrace,\\
T_0^\H S \cap \lbrace q \in \H^n : (d_\H f_{k+1})_0(q)=0\rbrace&=T_0^\H S \cap \lbrace q \in \H^n : (d_\H g_{k+1})_0(q)=0\rbrace.
\end{align*}
This, together with the fact that $(d_\H f_{k+1})_0$ and $(d_\H g_{k+1})_0$ are  linear maps (on $T_0^\H S $) with the same kernel, implies that there exists a positive constant $C>0$ such that 
\[
d_\H f_{k+1}=C d_\H g_{k+1} \quad \text{ on }T_p^\H S,
\]
as desired.
\end{proof}

\begin{defi}\label{def_outpoint}
Let $1 	\leq k\leq n-1$, $O \se \H^n$ be an open set and $S \se \H^n$ a $k$-codimensional $C^1_\H$ submanifold with boundary in $O$. Fix $p \in O \cap \pp S$ and let $U$ and $f_1,...,f_{k+1} \in C^1_\H(U)$ be as in Theorem \ref{teo_equivsupbound}. We say that a vector $v \in T^\H_p S$ is \emph{outward pointing} if $(d_\H f_{k+1})_p(v)<0$. 
\end{defi}

Definition \ref{def_outpoint} is well posed and does not depend on the choice of the family of local defining functions $f_1,\dots,f_{k+1}$ because of Lemma \ref{lem_fk+1}.

\begin{defi}
Let $1 	\leq k\leq n-1$, $O \se \H^n$ be an open set and $S \se \H^n$ a $k$-codimensional $C^1_\H$ submanifold with boundary in $O$. We say that $V: O \cap \pp S \to T^\H_p S$ is an \emph{outward pointing vector field} if, for any $p \in O \cap \pp S$, $V(p)$ is an outward pointing vector.
\end{defi}

\begin{lem}\label{lem_vfoutpt}
Let $1 	\leq k\leq n-1$, $O \se \H^n$ be an open set and $S \se \H^n$ a $k$-codimensional $C^1_\H$ submanifold with boundary in $O$. Then there exists a unique continuous unit outward pointing vector field $\nu_{\pp S}$ along $O \cap \pp S$ such that $\nu_{\pp S}(p) \perp T^\H_p \pp S$ for every $p \in O \cap \pp S$.
\end{lem}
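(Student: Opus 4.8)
The plan is to build $\nu_{\pp S}$ by an explicit pointwise formula in local defining coordinates and then to glue the local constructions using pointwise uniqueness. Fix $p\in O\cap\pp S$ and, by Theorem~\ref{teo_equivsupbound}, choose a neighbourhood $U$ and functions $f_1,\dots,f_{k+1}\in C^1_\he(U)$ with $\nabla_\he f_1,\dots,\nabla_\he f_{k+1}$ linearly independent, $U\cap S=\{f_1=\dots=f_k=0,\ f_{k+1}>0\}$ and $f_{k+1}(p)=0$. Recall that for $q=(x,y,t)$ the homomorphism $(d_\he f_j)_p$ depends only on the horizontal part of $q$, so that $T\in T^\he_p\pp S$; consequently any vector orthogonal to $T^\he_p\pp S$ is automatically horizontal. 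I would therefore work in the horizontal layers
\[
V_S(p)\ceq\{v\in\hel_1:\langle\nabla_\he f_j(p),v\rangle=0,\ j=1,\dots,k\},\qquad V_{\pp S}(p)\ceq\{v\in V_S(p):\langle\nabla_\he f_{k+1}(p),v\rangle=0\},
\]
which are the horizontal layers of $T^\he_p S$ and $T^\he_p\pp S$ and have dimensions $2n-k$ and $2n-k-1$. Thus $V_{\pp S}(p)$ has codimension $1$ in $V_S(p)$, and the three requirements $\nu\in T^\he_p S$, $\nu\perp T^\he_p\pp S$, $|\nu|=1$ single out exactly the two unit vectors of $V_S(p)$ orthogonal to $V_{\pp S}(p)$.

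To produce these vectors explicitly I let $w(p)$ be the orthogonal projection of $\nabla_\he f_{k+1}(p)$ onto $V_S(p)$; since $\nabla_\he f_{k+1}(p)\notin\spa(\nabla_\he f_1(p),\dots,\nabla_\he f_k(p))=V_S(p)^\perp$, we have $w(p)\neq0$. Writing $\nabla_\he f_{k+1}(p)=w(p)+r$ with $r\in V_S(p)^\perp$, one gets $\langle\nabla_\he f_{k+1}(p),v\rangle=\langle w(p),v\rangle$ for every $v\in V_S(p)$, so $w(p)\perp V_{\pp S}(p)$ and $\langle\nabla_\he f_{k+1}(p),w(p)\rangle=|w(p)|^2>0$. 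Hence the outward-pointing condition of Definition~\ref{def_outpoint}, namely $(d_\he f_{k+1})_p(\nu)<0$, forces
\[
\nu_{\pp S}(p)\ceq-\frac{w(p)}{|w(p)|}.
\]
This vector lies in $V_S(p)\subset T^\he_p S$, is orthogonal to $V_{\pp S}(p)$ and to $T$ (hence to $T^\he_p\pp S$), has unit norm, and satisfies $(d_\he f_{k+1})_p(\nu_{\pp S}(p))=-|w(p)|<0$; it therefore meets all four demands, and by the codimension-one discussion above it is the only unit vector doing so.

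Continuity would then be immediate from the formula: since $f_1,\dots,f_{k+1}\in C^1_\he(U)$, the gradients $\nabla_\he f_j$ are continuous and everywhere linearly independent, so the orthogonal projection onto $V_S(\cdot)$, and hence $w(\cdot)$ and $\nu_{\pp S}(\cdot)$, depend continuously on the base point. Finally, to pass from the local definitions to a global field on $O\cap\pp S$ and to obtain uniqueness, I would note that the four defining properties are intrinsic: $T^\he_p S$ does not depend on the local extension, while the outward-pointing notion does not depend on the chosen defining functions by Lemma~\ref{lem_fk+1}. Thus the value $\nu_{\pp S}(p)$ is uniquely determined at each point, the local formulas agree on overlaps, and they patch to a single continuous field with the asserted uniqueness. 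The only genuinely delicate point is precisely this consistency of the sign across different local defining functions, which is exactly what Lemma~\ref{lem_fk+1} guarantees; the remaining steps are routine linear algebra and continuity of orthogonal projections.
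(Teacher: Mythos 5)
Your proposal is correct and follows essentially the same route as the paper: the paper defines $\nu_{\pp S}(q)=-\pi_{T^\H_q S}\nabla_\H f_{k+1}(q)/|\pi_{T^\H_q S}\nabla_\H f_{k+1}(q)|$ and invokes Lemma~\ref{lem_fk+1} for independence of the defining functions and hence for gluing, exactly as you do; your projection onto the horizontal layer $V_S(p)$ coincides with the paper's projection onto $T^\H_p S$ because $\nabla_\H f_{k+1}(p)$ is horizontal and orthogonal to $T$. Your write-up merely makes explicit the codimension-one uniqueness argument and the sign verification that the paper leaves implicit.
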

\begin{proof}
Fix $p \in O \cap \pp S$. By Theorem \ref{teo_equivsupbound} and Remark \ref{rem_bound} there exist a neighbourhood $U$ of $p$
and functions $f_1,...,f_{k+1} \in C^1_\H(U)$ such that
\begin{align*}
& \text{$\nabla_\H f_1,\dots,\nabla_\H f_{k+1}$ are linearly independent in $U$,}\\
& U \cap S=\lbrace q \in U : f_1(q)=\cdots=f_k(q)=0,f_{k+1}(q)>0 \rbrace,\\
& U \cap \pp S=\lbrace q \in U : f_1(q)=\cdots=f_{k+1}(q)=0 \rbrace.
\end{align*}
We define the continuous vector field on $U \cap \pp S$
\[
\nu_{\pp S} (q)\ceq -\frac{\pi_{T_p^\H { S}}\nabla_\H f_{k+1}(q)}{|\pi_{T_p^\H { S}}\nabla_\H f_{k+1}(q)|},\qquad q\in U \cap \pp S
\]
where $\pi_{T_p^\H S}$ denotes the orthogonal projection on $T_p^\H S$. By Lemma \ref{lem_fk+1} it is clear that $\nu_{\pp S}$ does not depend on the choice of the set of the local defining functions $f_1,\dots,f_{k+1}$ so that we can continuously extend $\nu_{\pp S}$ on the whole $O \cap \pp S$.
\end{proof}

\begin{defi}
Under the assumptions and notation of Lemma \ref{lem_vfoutpt}, we call $\nu_{\pp S}$  the \emph{unit outward normal} to $\pp S$.
\end{defi}

\begin{defi}
Let $1 \leq k \leq n-1$, $O \se \H^n$ be an open set and $S \se \he^n$ be an orientable $k$-codimensional $C^1_\H$ submanifold with boundary in $O$. Fix a choice of a continuous $(2n+1-k)$-vector field $t^\H_S$ on $O \cap S$ orienting $O \cap S$ and  extend $t^\H_S$  continuously on $O \cap \overline{S}$. Let $\nu_{\pp S}$ be the unit outward normal to $\pp S$. We define the \emph{induced orientation on the boundary} as the one fixed by the continuous $(2n-k)$-vector field $t^\H_{\pp S}$ on $O \cap \pp S$ such that
\[
\nu_{\pp S} \wedge t^\H_{\pp S}=t^\H_{S}\qquad \text{on }O \cap \pp S.
\]
\end{defi}

We now turn to the case of critical codimension. In this case the role of the unit outward normal $\nu_{\pp S}$ will be played, in some sense, either by the constant vector field $T$ or by the constant vector field $-T$. We recall that $S$ can either {\em lie above} or {\em lie below its boundary} according to the terminology introduced in Remark~\ref{rem_soprasotto}.

\begin{defi}\label{def_indcr}
Let $O \se \H^n$ be an open set and $S \se \he^n$ be an orientable $n$-codimensional $C^1_\H$ submanifold with boundary in $O$. Fix a choice of a continuous $(n+1)$-vector field $t^\H_S$ on $O \cap S$ determining an orientation on $O \cap S$, that we extend continuously on $O \cap \overline{S}$. We define the \emph{induced orientation on the boundary} $\pp S$ as the one determined by the  continuous horizontal unit $n$-vector $\tau_{\pp S}$ tangent to $\pp S$  such that the following holds for every $p\in O\cap \pp S$: if  $S$ lies above its boundary around $p$, then
\[
-T \wedge \tau_{\pp S} (p)=t^\H_{S}(p),
\]
while if $S$ lies below its boundary around $p$ then
\[
T \wedge \tau_{\pp S}(p)=t^\H_{S}(p).
\]
\end{defi}

\section{Approximation of submanifolds with boundary} \label{sec_approssimazione} 
In this section we show that submanifolds with boundary can be locally approximated by $C^1$ submanifolds with boundary. The approximation is in the sense of weak convergence of currents, which are locally well defined because every submanifold is locally orientable; however, the careful reader will notice that the convergence is also in the sense of local Hausdorff distance and of the $C^1_\H$ topology. Note also that, because of Lemma \ref{lem_lowdimsubwbound}, low dimensional $C^1_\H$-submanifold with boundary are Euclidean $C^1$-submanifold with boundary; therefore, in this section we deal only with low codimensional and critical codimensional $C^1_\H$-submanifold with boundary. 
We start with the case of non-critical codimension $k\leq n-1$.

\begin{lem}
\label{approx2}
Let   $1 \leq k \leq n-1$ be an integer,  $O \subset \he^n$  an open set, $S$  an oriented $k$-codimensional $C^1_\he$ submanifold  with boundary in $O$ such that $S,\pp S$ have locally finite measures\footnote{The measures on $S,\pp S$  are of course the Hausdorff measures on $S,\pp S$ of the appropriate dimensions that are (respectively) $2n+1-k,2n-k$ (in the low dimensional case $k\geq n+1$), $Q-k,Q-k-1$ (in the low-codimensional and non-critical case $k\leq n-1$) or $n+2,n$ (in the critical case $k=n$). } on $O$. Then for each $p \in O \cap \pp S$ there exist an open set $U$ with $p \in U$ and a sequence $(S_h)_{h \in \mathbb{N}}$ of oriented smooth and $C^1_\he$ regular submanifolds of codimension $k$ with smooth and $C^1_\he$ boundary in $O$ and locally finite measures in $O$ such that 
\begin{equation}\label{eq_AAA}
\curr{S_h}(\omega) \xrightarrow{h \to +\infty} \curr{S}(\omega)\qquad\text{for every $\omega \in \mathcal{D}^{2n+1-k}_\he(U)$}
\end{equation}
and
\begin{equation}\label{eq_BBB}
\curr{\pp S_h}(\alpha) \xrightarrow{h \to +\infty} \curr{\pp S}(\alpha)\qquad\text{for every $\alpha \in \mathcal{D}^{2n-k}_\he(U)$}.
\end{equation}
\end{lem}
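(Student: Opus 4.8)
The plan is to realise the approximants as level sets of the defining functions of Theorem~\ref{teo_equivsupbound}, following the scheme of Lemma~\ref{approx} but keeping track of the cut produced by the extra function $f_{k+1}$. First I would fix $p\in O\cap\pp S$ and invoke Theorem~\ref{teo_equivsupbound} together with Remark~\ref{rem_wlogCinfty} to obtain a neighbourhood $U$ of $p$ and functions $f_1,\dots,f_{k+1}\in C^1_\he(U)$, smooth on $U\setminus\{f_1=\dots=f_k=0\}$, with $\nabla_\he f_1,\dots,\nabla_\he f_{k+1}$ linearly independent on $U$ and
\[
U\cap S=\{f_1=\dots=f_k=0,\ f_{k+1}>0\},\qquad U\cap\pp S=\{f_1=\dots=f_{k+1}=0\}.
\]
I would then define
\[
S_h\ceq\{f_1=\tfrac1h,\,f_2=\dots=f_k=0,\,f_{k+1}>0\},\qquad \pp S_h\ceq\{f_1=\tfrac1h,\,f_2=\dots=f_k=0,\,f_{k+1}=0\},
\]
oriented by the same formulas $t^\he=*\,(\nabla_\he f_1\wedge\cdots\wedge\nabla_\he f_k)/|\cdots|$ and $*\,(\nabla_\he f_1\wedge\cdots\wedge\nabla_\he f_{k+1})/|\cdots|$ (with consistent signs) used for $S$ and $\pp S$. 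On $S_h\cup\pp S_h$ one has $f_1=\tfrac1h\neq0$, so these sets avoid the singular set $\{f_1=\dots=f_k=0\}$ and are therefore Euclidean $C^\infty$; by the implication $(ii)\Rightarrow(i)$ of Theorem~\ref{teo_equivsupbound} each $S_h$ is a $C^1_\he$ submanifold with boundary $\pp S_h$, and both have locally finite measure by the area formula~\eqref{eq_formulaareaFSSC}.

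The convergence~\eqref{eq_BBB} is then obtained by running the argument of Lemma~\ref{approx} on the codimension-$(k+1)$ submanifold $\pp S$ with defining map $F\ceq(f_1,\dots,f_{k+1})$ (this requires $k+1\leq n$, which is exactly our hypothesis $k\leq n-1$): Theorem~\ref{fond1} produces a graph parametrisation of $\pp S$ and uniformly convergent graph maps whose graphs are exactly the level sets $\{F=(\tfrac1h,0,\dots,0)\}=\pp S_h$, and observations (a)--(f) in the proof of Lemma~\ref{approx} pass to the limit verbatim. The only point to check is that $\pp S_h$ is genuinely smooth, which holds because on $\{F=(\tfrac1h,0,\dots,0)\}$ one has $f_1\neq0$, so $F$ is smooth there.

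For~\eqref{eq_AAA} I would argue similarly, but carry the cut $\{f_{k+1}>0\}$ along. Applying Theorem~\ref{fond1} to the extension $S'\ceq\{f_1=\dots=f_k=0\}$ gives an orthogonal splitting $\he^n=\W\cdot\V$ with $\V$ horizontal and $k$-dimensional, a graph map $\Phi$ over an open set $A\subset\W$ with $S'\cap U=\Phi(A)$, and smooth graph maps $\Phi_h\to\Phi$ uniformly with $\{f_1=\tfrac1h,f_2=\dots=f_k=0\}=\Phi_h(A)$. Since $S=S'\cap\{f_{k+1}>0\}$ and $S_h=\Phi_h(A)\cap\{f_{k+1}>0\}$, the area formula~\eqref{eq_formulaareaFSSC} expresses $\curr{S}(\omega)$ and $\curr{S_h}(\omega)$ as integrals over the domains $A_+\ceq\{\xi\in A:f_{k+1}(\Phi(\xi))>0\}$ and $A^h_+\ceq\{\xi\in A:f_{k+1}(\Phi_h(\xi))>0\}$ of integrands that converge uniformly on $A$ (again by observations (a)--(f) of Lemma~\ref{approx}). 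Writing the difference $\curr{S_h}(\omega)-\curr{S}(\omega)$ as $\int_A\uno_{A^h_+}(g_h-g)\,d\xi+\int_A(\uno_{A^h_+}-\uno_{A_+})\,g\,d\xi$, with $g_h,g$ the two integrands, the first integral tends to $0$ by uniform convergence and the second by dominated convergence, once we know that $\uno_{A^h_+}\to\uno_{A_+}$ almost everywhere on $A$.

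This last a.e.\ convergence is the crux and the main obstacle. Since $f_{k+1}$ is continuous and $\Phi_h\to\Phi$ pointwise, $f_{k+1}(\Phi_h(\xi))\to f_{k+1}(\Phi(\xi))$, so $\uno_{A^h_+}(\xi)\to\uno_{A_+}(\xi)$ at every $\xi$ with $f_{k+1}(\Phi(\xi))\neq0$; it thus suffices to show that $\{\xi\in A:f_{k+1}(\Phi(\xi))=0\}=\Phi^{-1}(\pp S)$ is Lebesgue-null. This follows from a dimension count: $\pp S$ has Hausdorff dimension $Q-k-1<Q-k$, so $\s^{Q-k}(\pp S)=0$; on the other hand~\eqref{eq_formulaareaFSSC} represents $\s^{Q-k}\res(S'\cap U)$ as $C_{n,k}\Phi_\#(\rho\,\mathcal H^{2n+1-k}_E\res\W)$ with density $\rho=(|\nabla_\he f_1\wedge\cdots\wedge\nabla_\he f_k|/\Delta)\circ\Phi$ bounded below by a positive constant, whence $\mathcal H^{2n+1-k}_E(\Phi^{-1}(\pp S))=0$. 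Finally I would note that the orientations chosen above are the correct ones: the unit outward normal of Lemma~\ref{lem_vfoutpt} and the induced boundary orientation are continuous expressions in $f_1,\dots,f_{k+1}$ and their horizontal gradients, so the uniform convergence of these gradients guarantees that the signs match in the limit.
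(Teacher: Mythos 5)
Your proposal is correct and follows essentially the same route as the paper's proof: the same approximants $S_h=\{f_1=\tfrac1h,\,f_2=\dots=f_k=0,\,f_{k+1}>0\}$, the same reduction of \eqref{eq_AAA} to the graph parametrisation of Theorem~\ref{fond1} and observations (a)--(f) of Lemma~\ref{approx}, the same key step (a.e.\ convergence of the indicators of the graph domains, justified by $\s^{Q-k}(\pp S)=0$ and the area formula~\eqref{eq_formulaareaFSSC} showing $\Phi^{-1}(\pp S)$ is Lebesgue-null), and the same treatment of \eqref{eq_BBB} by rerunning Lemma~\ref{approx}'s argument on the codimension-$(k+1)$ level sets of $(f_1,\dots,f_{k+1})$, which is exactly where the hypothesis $k\leq n-1$ enters. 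The only cosmetic difference is that you split $\curr{S_h}(\omega)-\curr{S}(\omega)$ explicitly into a uniform-convergence term and a dominated-convergence term, whereas the paper passes to the limit in one display.
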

\begin{proof}
By Theorem~\ref{teo_equivsupbound}, for every $p \in \pp S \cap O$ there exist a neighbourhood $U \subset O$ and $f_1,\dots,f_{k+1}\in C^1_\H(U)$ such that, writing $f=(f_1,\dots,f_{k+1})$ and $\hat f=(f_1,\dots,f_k)$,
\begin{align*}
& \nabla_\he f_1,\dots,\nabla_\he f_{k+1}\text{ are linearly independent in $U$ }\\
& U\cap S=\{q\in U : \hat f(q)=0, f_{k+1}(q)>0\}\\
& U\cap \pp S=\{q\in U : f(q)=0\}.
\end{align*}
We also introduce the $C^1_\H$-regular submanifold $S'=\{ q\in U :f_1(q)=\dots=f_k(q)=0\}\supset \overline S$. By Remark~\ref{rem_wlogCinfty} we can also assume that $f_1,\dots, f_{k+1}$ are smooth on $U\setminus S'$; we define
\begin{align*}
& S_h' \ceq \{q\in U : \hat f(q)=(\tfrac 1h,0,\dots,0)\}\\
& S_h \ceq \{q\in U : \hat f(q)=(\tfrac 1h,0,\dots,0),f_{k+1}(q)>0\}.
\end{align*}
We observe that $S_h$ are $k$-codimensional smooth and $C^1_\he$-regular submanifolds with smooth and $C^1_\he$-regular boundary $\pp S_h= \{q\in U : \hat f(q)=(\tfrac 1h,0,\dots,0),\ f_{k+1}(q)=0\}$. 

We claim that, possibly reducing $U$,~\eqref{eq_AAA} holds.
By Theorem~\ref{fond1} and the Implicit Function Theorem~\cite[Lemma~2.10]{jnv}, up to reducing $U$ there exist an orthogonal splitting $\he^n=\W\cdot\V$, an open subset $E\subset\W$ and  maps $\phi,\phi_h:E\to\V$ such that $S'=gr_\phi$, $S_h'=gr_{\phi_h}$ and $\phi_h$ are smooth. Let $E_h,E_\infty\subset E$ be such that $S_h=gr_{\phi_h|E_h}$ and $S=gr_{\phi|E_\infty}$; notice that $E_h,E_\infty$ are open. 
Reasoning as in the proof of Lemma~\ref{approx} we observe that, possibly reducing $E$ and $U$, for every $\omega\in \mathcal{D}^{2n+1-k}_\he(U)$
\begin{itemize}
\item[(a)] $\Phi_h \xrightarrow{h \to +\infty} \Phi$  uniformly on $E$, where $\Phi(\xi):=\xi \cdot \phi(\xi)$ and $\Phi_h(\xi):=\xi \cdot \phi_h(\xi)$
\item[(b)] 
$
\left( \nabla_\he \left(f_1-\frac{1}{h}\right) \wedge  \cdots \wedge \nabla_\he f_k \right) \circ \Phi_h \xrightarrow{h \to +\infty}\left( \nabla_\he f_1  \wedge \cdots \wedge \nabla_\he f_k \right) \circ \Phi
$
uniformly on $E$
\item[(c)]  $t^\he_{S_h} \circ \Phi_h \xrightarrow{h \to +\infty}t^\he_{S} \circ \Phi$ uniformly on $E$ 
\item[(d)]  $\omega \circ \Phi_h \xrightarrow{h \to +\infty} \omega \circ \Phi$ uniformly on $E$
\item[(e)]  $\Delta_h \circ \Phi \xrightarrow{h \to +\infty} \Delta \circ \Phi$ uniformly on $E$, where $\Delta,\Delta_h $ are as in Lemma~\ref{approx}
\item[(f)] there exists $C>0$ such that $\Delta_h\geq C$ and $\Delta\geq C$.
\end{itemize}
Moreover, we claim that
\begin{itemize}
\item[(g)] $\chi_{E_h}\to\chi_{E_\infty}$ almost everywhere on $E$. 
\end{itemize}
Statement (g) can be proved observing that
\begin{itemize}
\item if $\xi\in E_\infty$, then $f_{k+1}(\Phi_h(\xi))\to f_{k+1}(\Phi(\xi))>0$, therefore $\xi\in E_h$ for large $h$
\item $\mathcal H^{2n+1-k}_E(\pp E_\infty)=0$; this follows from~\eqref{eq_formulaareaFSSC} and taking into account that $\pp E_\infty=\Phi^{-1}(\pp S)$ and $\s^{Q-k}(\pp S)=0$
\item if $\xi\in E\setminus\overline{E_\infty}$, then $f_{k+1}(\Phi_h(\xi))\to f_{k+1}(\Phi(\xi))<0$, therefore $\xi\notin E_h$ for large $h$.
\end{itemize}
Using (a)--(g) we deduce
\begin{align*}
\lim_{h \to +\infty}\curr{S_h}(\omega)=
& \lim_{h \to +\infty}\int_{E_h} \langle t^\he_{S_h}(\Phi_h(\xi))|\omega(\Phi_h(\xi))\rangle \frac{|\nabla_\he \left(f_1-\frac{1}{h}\right)\wedge \cdots \wedge \nabla_\he f_k |}{\Delta_h}(\Phi_h(\xi))d\xi 
\\
=& \int_{E_\infty} \langle t^\he_{S}(\Phi(\xi))|\omega(\Phi(\xi))\rangle \frac{|\nabla_\he f_1\wedge \cdots \wedge \nabla_\he f_k |}{\Delta}(\Phi(\xi))d\xi
\\
=&\frac{1}{C_{n,k}}\int_{S\cap U}\langle t^\he_{S}(q)|\omega(q) \rangle d\s^{Q-k}(q)\\
=&\curr{S}(\omega),
\end{align*}
as desired.

Statement~\eqref{eq_BBB} can be proved (possibly restricting $U$) with the same argument in Lemma~\ref{approx} by writing $\pp S,\pp S_h$ as intrinsic graphs of codimension $k+1$; we omit the boring details.
\end{proof}

We now turn to the case of critical codimension $k=n$.

\begin{lem}
\label{approx2_crit}
Let  $O \subset \he^n$ be an open set and $S$  an oriented $n$-codimensional $C^1_\he$ submanifold  with boundary in $O$ such that $S,\pp S$ have locally finite measures on $O$. Then, for each $p \in O \cap \pp S $ there exist an open set $U$ with $p \in U$ and a sequence $(S_h)_{h \in \mathbb{N}}$ of $C^1$ and $C^1_\he$ oriented regular submanifolds of codimension $n$ in $U$ with boundary $U \cap \pp S =U \cap \pp S_h$ for every $h \in \N$ and with locally finite measures in $O$ such that
\begin{equation}\label{eq_AAA_crit}
\curr{S_h}(\omega) \xrightarrow{h \to +\infty} \curr{S}(\omega)\qquad\text{for every $\omega \in \mathcal{D}^{n+1}_\he(U)$}.
\end{equation}
\end{lem}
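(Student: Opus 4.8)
The plan is to exploit Lemma~\ref{ghat} to manufacture a vertical (i.e.\ invariant under left translations along $T$) comparison submanifold that contains $\pp S$ \emph{exactly} and is tangent to the local extension $S'$ along $\pp S$; the approximants $S_h$ will then coincide with this cylinder on a thin strip around $\pp S$ and with the smooth approximations of $S'$ elsewhere, so that $\pp S_h=\pp S$ for every $h$ while $S_h\to S$. Concretely, I fix $p\in O\cap\pp S$ and apply Lemma~\ref{ghat}, obtaining $U$, the extension $S'=\{f=0\}$ with $f\in C^1_\he(U,\R^n)$ as in Lemma~\ref{fsmooth}, and $\hat g\in C^1(\R^{2n},\R^n)$ with $\nabla\hat g\circ\pi=\nabla_\he f$ on $\Sigma=U\cap\pp S$, with $\col[\pp_{x_1}\hat g|\cdots|\pp_{x_n}\hat g]\geq\delta_1\Id$, and with $\pi(\Sigma)\se\{\hat g=0\}$. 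Setting $g\ceq\hat g\circ\pi$, a direct computation (since $g$ is independent of $t$) gives $\nabla_\he g=\nabla\hat g\circ\pi$, hence $g\in C^1_\he(U,\R^n)$, $\widehat\nabla_\he g\geq\delta_1\Id$, and $\nabla_\he g=\nabla_\he f$ on $\pp S$. Thus $C\ceq\{g=0\}$ is a Euclidean $C^1$ and $C^1_\he$-regular submanifold of codimension $n$, invariant in the $T$-direction, containing $\pp S$ (because $\pi(\pp S)\se\{\hat g=0\}$), and whose tangent group coincides with that of $S'$ at every point of $\pp S$ (by the Remark after Definition~\ref{defsup}, two defining functions with the same horizontal gradient share the same $d_\he$, hence the same $T^g_\he=T^\he$ by Proposition~\ref{teoten}).

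Next I would realize both $S'$ and $C$ as orthogonal intrinsic graphs over a splitting $\he^n=\W\cdot\V$ with $\W=T^\he_pS'$ and $\V$ horizontal of dimension $n$ (Theorem~\ref{fond1}), obtaining graph maps $\phi$ for $S'$ and $\psi$ for $C$ over some $A\se\W$; the tangency above forces $\phi=\psi$ and first-order contact along $\Gamma\ceq\pi_\W(\pp S)$. By Remark~\ref{rem_soprasotto}, $\Gamma$ is the $T$-graph $\{t=\gamma(s)\}$ in $\W\equiv\R^{n+1}$, and, assuming as we may that $S$ lies above its boundary, $\pi_\W(U\cap S)=\{t>\gamma\}$. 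I then define $S_h$ as the intrinsic graph over $\{t\geq\gamma\}$ of a function equal to $\psi$ on the strip $\{0\leq t-\gamma\leq a_h\}$, equal to the smooth approximation $\phi_h$ of $\phi$ provided by Theorem~\ref{fond1} on $\{t-\gamma\geq 2a_h\}$, and a cutoff interpolation of the two in between, with $a_h\to0$. Because $\psi|_\Gamma=\phi|_\Gamma$, the trace of $S_h$ over $\Gamma$ is $\pp S$ itself, so $U\cap\pp S_h=U\cap\pp S$ for every $h$; and since $S_h$ coincides with the cylinder $C$ near $\Gamma$, its boundary is genuinely the Legendrian $\pp S$ and $S_h$ is Euclidean $C^1$ and $C^1_\he$-regular up to the boundary there.

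To control the transition I would manage the interpolation at the level of defining functions, as in the proof of Lemma~\ref{fsmooth}: interpolating $g$ and $f-(\tfrac1h,0,\dots,0)$ by a cutoff in the variable $t-\gamma$. Since $f$ and $g$ have coinciding horizontal gradients and both vanish on $\pp S$, the difference $f-g$ and its horizontal gradient are small throughout the strip $\{0\le t-\gamma\le 2a_h\}$ once $a_h$ is small; hence the extra term $\widehat\nabla_\he\chi\otimes(f-g)$ produced by the cutoff can be made negligible and the interpolating function keeps $\widehat\nabla_\he\geq\tfrac12\delta_1\Id$. Restricting its zero level set to $\{t\ge\gamma\}$ yields $S_h$ as a codimension-$n$ $C^1_\he$-regular (and Euclidean $C^1$) submanifold with boundary $\pp S$, with locally finite measure by the area formula~\eqref{eq_formulaareaFSSC}. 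For the current convergence I split $S_h$ into the cap $\{0\le t-\gamma\le 2a_h\}$ and the outer part: the cap is a piece of $C$ whose $\s^{Q-n}$-measure tends to $0$ as $a_h\to0$ (its area is comparable to that of $S$ near $\pp S$, which is absolutely continuous), so with $|\langle t^\he_{S_h}|\omega\rangle|$ bounded it contributes negligibly, while the outer part converges to $\curr S(\omega)$ exactly as in the proof of Lemma~\ref{approx2} via the uniform convergence of graph maps, tangent vectors and area densities.

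The hard part will be the gluing and the regularity up to the boundary, which is precisely the Frobenius obstruction flagged in the introduction: one cannot freely prescribe a horizontal $n$-dimensional boundary on a smooth approximant. I sidestep it by pinning $\pp S_h=\pp S$ through the vertical cylinder $C$, but I must then guarantee that the interpolation remains a non-degenerate $C^1_\he$ graph up to $\Gamma$. The delicate point is the order of limits: the smooth graphs $\phi_h$ converge to $\phi$ only in the intrinsic (horizontal-gradient) sense, while $\psi$ is genuinely Euclidean $C^1$ and tangent to $S'$ along $\Gamma$; so I first choose $a_h$ small enough, using the $C^1_\he$-closeness of $f$ and $g$ near $\pp S$, to keep $\widehat\nabla_\he$ of the interpolated defining function uniformly $\geq\tfrac12\delta_1\Id$ across the transition, and only then let $h\to\infty$ to recover convergence to $\curr S$. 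Verifying these uniform quantitative estimates on the transition strip, together with the uniform local area bounds needed to pass to the limit, is the technically demanding core of the argument.
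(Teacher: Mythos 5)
Your overall blueprint coincides with the paper's: build the $T$-invariant cylinder $\lbrace \hat g\circ\pi=0\rbrace$ from Lemma~\ref{ghat}, glue it to (a shifted copy of) $S'$ at the level of defining functions so that the boundary is pinned exactly to $\pp S$, and prove~\eqref{eq_AAA_crit} by splitting off a thin cap around $\pp S$ whose measure vanishes. The gap is in the gluing, i.e.\ precisely the step you flag as the core. Your cutoff $\chi$ is a function of the graph coordinate $t-\gamma$ with transition width $a_h$, hence $|\nabla_\he\chi|\leq C/a_h$ and in general no better. But the only closeness between $f$ and $g$ is the qualitative first-order contact of two $C^1_\he$ functions along $\pp S$, namely $|f-g|=o(d(\cdot,\pp S))$ with an unquantified modulus. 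Since $t$ has homogeneity two, points lying over the slab $\lbrace 0\leq t-\gamma\leq 2a_h\rbrace$ are only at \emph{intrinsic} distance of order $\sqrt{a_h}$ from $\pp S$, so on the transition you merely get $|f-g|=o(\sqrt{a_h})$, and the cross term $\widehat\nabla_\he\chi\otimes(g-f+v_h)$ (note the shift $v_h$, which your formula omits) is only bounded by $o(1)/\sqrt{a_h}+O(1/(ha_h))$. For a generic modulus (e.g.\ $|f-g|\sim d(\cdot,\pp S)^{3/2}$) this blows up for \emph{every} choice of $a_h\to0$, so the claimed bound $\widehat\nabla_\he\geq\tfrac12\delta_1\Id$ across the transition is not obtainable this way. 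The paper's cutoffs $\chi_h$ are instead adapted to the intrinsic neighbourhoods $\Sigma_{1/h}\se\Sigma_{2/h}$: there $|\nabla_\he\chi_h|\leq C_2h$ is matched against $\|g-\tilde f_h\|_{C^0(\Sigma_{2/h})}=o(1/h)$, giving a cross term $o(1)$. The two scales agree because both are intrinsic; yours disagree by a square root.

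Second, you never mollify $f$, and then the Euclidean $C^1$ regularity of $S_h$ (required by the statement, and needed later to apply the classical Stokes theorem on $S_h$) is out of reach. In your transition region the defining function is $\chi g+(1-\chi)(f-v_h)$, where $f$ is only $C^1_\he$ --- not Euclidean $C^1$ --- near $\lbrace f=0\rbrace$. Tangency of $S'$ and the cylinder along $\pp S$ does not prevent them from crossing each other arbitrarily close to $\pp S$, so the zero set of your interpolant can meet $\lbrace f=0\rbrace$ inside the transition region, and there you cannot conclude that $S_h$ is a Euclidean $C^1$ submanifold. This is exactly what Steps 4--5 of the paper's proof are for: near $\Sigma$ the function $f$ is replaced by $\psi_h(f\star K_{r_h})+(1-\psi_h)f$, so that wherever the cutoff acts the glued function is assembled from Euclidean $C^1$ (indeed smooth) pieces, while the shift $v_h=(2/h^2,0,\dots,0)$ combined with $\|f\star K_{r_h}-f\|_{C^0}\leq 1/h^2$ forces the level set $\lbrace f_h=0\rbrace$ to stay away from $\lbrace f=0\rbrace$ outside the mollified zone, where $f$ is genuinely smooth. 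With intrinsic cutoffs and this mollification-plus-shift device your plan becomes the paper's proof; without them the construction does not close.
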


\begin{proof}
Let $p\in\pp S$ be fixed; denote by $S'$ a $C^1_\he$ submanifold as in Definition~\ref{def_supbound}. We use an approximation argument which is split into 8 steps. 

\emph{Step 1:  $S'$ as the level set of a $C^1_\he$ regular function as well as a graph.}\\
By Lemma~\ref{fsmooth}, up to an isometry of $\he^n$ there exist a neighbourhood $U$ of $p$, a function $f:\he^n \to \R^n$ and $C_0>0,\delta_0>0$ such that
\begin{equation}
\label{propf}
\begin{cases}
f \in C^1_\he(\he^n,\R^n),\\
f \in C^\infty(\he^n \setminus \lbrace f=0 \rbrace,\R^n),\\
|\nabla_\he f|\leq C_0 \text{ on }\H^n,\\
\widehat\nabla_\he f(q)\geq \delta_0 \Id_{n \times n} \forall q \in \H^n \text{ in the sense of quadratic forms} ,\\
S' \cap U=\lbrace q \in U : f(q)=0 \rbrace.
\end{cases}
\end{equation}
We assume without loss of generality that $S'=\lbrace f=0 \rbrace\subset\he^n$; by \cite[Theorem 1.4]{vittone},  $S'$ is the entire intrinsic graph $S'=gr_\phi$ of a suitable $\phi:\W \to \V$ where
\[
\W \ceq \lbrace(0,y,t) \in \R^n \times \R^n \times \R \equiv \he^n \rbrace ,\quad
\V \ceq \lbrace(x,0,0) \in \R^n \times \R^n \times \R \equiv \he^n \rbrace.
\]
Moreover, possibly reducing $U$ we can assume that $U=U_\W \cdot U_\V$ for  suitable open sets $U_\W \se \W$ and $U_\V \se \V$.

\emph{Step 2:  $\pi(\pp S)$  as the level set of a $C^1$ function $\hat{g}$.}\\
By Lemma \ref{ghat}, possibly reducing $U$, there exist $\hat{g} \in C^1(\R^{2n},\R^n)$ and $\delta_1>0$ such that
\[
\begin{cases}
\hat{g} \in C^1(\R^{2n},\R^n),\\
|\nabla \hat{g}| \text{ is bounded on } \R^{2n},\\
\nabla \hat{g}\circ \pi=\nabla_\he f \text{ on } \Sigma=U \cap \pp S,\\
\col[\pp_{x_1}\hat{g}|\cdots|\pp_{x_n}\hat{g}](z)\geq \delta_1 \Id_{n \times n} \forall z \in \R^{2n} \text{ in the sense of quadratic forms},\\
\pi(\Sigma) \se \lbrace \hat{g}=0 \rbrace.
\end{cases}
\]

\emph{Step 3: Construction of a $C^1$ and $C^1_\he$ regular submanifold $\tilde{S}$ which contains $\Sigma=U \cap \pp S$.}\\
Define  $g:\he^n \to \R^{n}$ by $g(x,y,t) \ceq \hat{g}(x,y)$; let  $\tilde{S} \ceq \lbrace g=0 \rbrace$. By Step 2 there exist $C_1>0$ and $\delta_1>0$ such that
\begin{equation}
\label{propgh}
\begin{cases}
|\nabla_\he g|\leq C_1 \mbox{ on } \he^{n},\\
\Sigma=U\cap\pp S \se \tilde S,\\
\widehat\nabla_\he g(q)=\col[\pp_{x_1}\hat{g}|\cdots|\pp_{x_n}\hat{g}](q)\geq \delta_1 \Id_{n \times n} \:\forall q \in \he^n \text{ in the sense of quadratic forms},\\
\nabla_\he {g}=\nabla_\he f \mbox{ on } \Sigma.
\end{cases}
\end{equation}
Using \cite[Theorem 1.4]{vittone} and the fact that  $g \in C^1(\he^{n},\R^n)$ one obtains
\begin{equation*}
\begin{cases}
\tilde{S} \mbox{ is a } C^1_\he \mbox{ regular submanifold and } \mbox{ Euclidean  $C^1 $ submanifold},\\
U \cap \tilde{S} = U\cap(\pi(\Sigma) \times \R) \subset \R^{2n}\times \R^n \equiv \he^n,\\
\tilde{S}=gr_{\tilde{\phi}} \mbox{ for some globally defined } \tilde{\phi} \in C^1(\W,\V).
\end{cases}
\end{equation*}

\emph{Step 4: Construction of  $C^1_\he$ regular submanifolds $S_h'$ approximating $S'$.}\\
We fix a non-negative kernel $K \in C^\infty_c(U(0,1))$ such that $\int K d\mathcal{L}^{2n+1}$ and we define
\[
K_r(p) \ceq \frac{1}{r^{2n+2}}K(\delta_{1/r}(p)).
\]
For every $h \in \N$ we also define  $\psi_h \in C^\infty_c(\Sigma_{3/h})$ (where by $\Sigma_\ve$ we denote the $\ve$-neighbourhood of $\Sigma$) such that $\psi_h \equiv 1$ on $\Sigma_{2/{h}}$ and $0 \leq \psi_h \leq 1$. Define
\[
\tilde{f}_h \ceq \psi_h(f \star K_{r_h})+(1-\psi_h)f,
\]
where the positive parameters $r_h\leq 1/h$ will be fixed later. Observe that
\[
\nabla_\he \tilde{f}_h=(\nabla_\he \psi_h)(f \star K_{r_h}-f)+\psi_h(\nabla_\he f \star K_{r_h})+(1-\psi_h)\nabla_\he f.
\]
Since $(f \star K_{r}-f)\xrightarrow{r}0$ uniformly on $U$, using~\eqref{propf} and choosing $r_h$ small enough one gets
\begin{equation}\label{eq_blablabla}
\begin{cases}
|\nabla_\he \tilde{f}_h|\leq 2C_0 +1, \\
\col[X_1\tilde{f}_h|\cdots|X_n\tilde{f}_h](q)\geq \frac{\delta_0}{2}\Id_{n \times n}\:\forall q \in \he^n \text{ in the sense of quadratic forms},\\
\| f \star K_{r_h}-f\|_{C^0(U)}\leq \frac{1}{h^2}.
\end{cases}
\end{equation}
Since $\nabla_\he g=\nabla_\he f \mbox{ on }\Sigma$ we have
\[
\| \nabla_\he (g-f)\|_{C^0(\Sigma_{3/h})}=o(1) \mbox{ as }h \to +\infty;
\]
this, together with the fact that $g = f=0$ on $\Sigma$, implies that
\[
\|g-f\|_{C^0(\Sigma_{3/h})}=o\left(\tfrac{1}{h}\right) \mbox{ as }h \to +\infty.
\]
Using~\eqref{eq_blablabla}, for small enough $r_h$ we find
\begin{equation}
\label{propft}
\|g-\tilde{f}_h\|_{C^0(\Sigma_{2/h})}=o\left(\tfrac{1}{h}\right).
\end{equation}
For every $h \in \N$ we fix a cut-off function $\chi_h \in C^\infty_c(\Sigma_{2/{h}})$ such that
\begin{equation}
\label{propchi}
\begin{cases}
0 \leq \chi_h \leq 1,\\
\chi_h \equiv 1 \mbox{ on }\Sigma_{1/h},\\
|\nabla_\he \chi_h|\leq C_2h,
\end{cases}
\end{equation}
for a suitable $C_2>0$ not depending on $j$. Let $v_h \ceq (\tfrac{2}{h^2},0,...,0)\in \R^n$ and define
\[
f_h \ceq \chi_hg+(1-\chi_h)(\tilde{f}_h-v_h).
\]
We observe that
\[
\nabla_\he f_h=\nabla_\he \chi_h(g-\tilde{f}_h+v_h)+\chi_h \nabla_\he g+(1-\chi_h)\nabla_\he \tilde{f}_h.
\]
Because of \eqref{propft} and \eqref{propchi} one has
\[
\|\nabla_\he \chi_h(g-\tilde{f}_h+v_h)\|_{C^0(\Sigma_{2/h})} \leq C_2h \left( o\left(\frac{1}{h}\right)+\frac{1}{h^2} \right)=o(1) 
\]
while 
\[
\nabla_\he \chi_h(g-\tilde{f}_h+v_h) \equiv 0 \mbox{ out of }\Sigma_{2/h}.
\]
Therefore, if $h$ is large enough we get by \eqref{propgh} and~\eqref{eq_blablabla} that for suitable $C_3>0$ and $\delta_3>0$
\begin{equation}\label{eq_propfj}
\begin{cases}
|\nabla_\he f_h|\leq C_3 \mbox{ on }\he^n,\\
\col[X_1f_h|\cdots|X_nf_h](q)\geq \delta_3 \Id_{n \times n} \:\forall q \in \he^n \text{ in the sense of quadratic forms}.
\end{cases}
\end{equation}
Therefore, the level set $S'_h \ceq \lbrace f_h=0 \rbrace$  is a $C^1_\he$ regular submanifold and, by \cite[Theorem 1.4]{vittone}, it is also the intrinsic Lipschitz graph of a globally defined function $\phi_h:\W \to \V$.

\emph{Step 5: $U \cap S'_h$ is a  Euclidean $C^1$ submanifold and $\phi_h\in C^1(U_\W,\V)$.}\\
Let $q \in S'_h$, i.e., $f_h(q)=0$. If $q \in \Sigma_{{2}/{h}}$, then $\psi_h \equiv 1$ in a neighbourhood of $q$ and (again in a neighbourhood of $q$)
\[
f_h=\chi_hg+(1-\chi_h)(f \star K_{r_h}-v_h).
\]
In particular, $f_h$ is  $C^1$ regular in a neighbourhood of $q$, hence $S'_h$ is of class $C^1$ in a neighbourhood of $q$. Instead, if $q \in U \setminus \Sigma_{2/h}$, then $\chi_h\equiv 0$ in a neighbourhood of $q$ and 
\[
0=f_h(q)=\tilde{f}_h(q)-v_h=\psi_h(q)(f \star K_{r_h})(q)+(1-\psi_h(q))f(q)-v_h,
\]
i.e.,
\[
f(q)=v_h+\psi_h(q)\Big(f(q)-(f \star K_{r_h})(q)\Big).
\]
Using~\eqref{eq_blablabla}
\[
|f(q)|\geq \|v_h\|-\|f-f\star K_{r_h}\|_{C^0(U)}\geq \frac{2}{h^2}-\frac{1}{h^2}>0,
\]
hence $f(q) \neq 0 $ and, by~\eqref{propf}, $f$ is $C^\infty$ in a neighbourhood of $q$ where one also has
\[
f_h=\tilde{f}_h-v_h=\psi_h(f \star K_{r_h})+(1-\psi_h)f-v_h.
\]
It follows that $f_h$ is  are $C^\infty$ smooth in a neighbourhood of $q$; in particular, $U \cap S'_h$ is a $C^1$ Euclidean submanifold and $\phi_h$ is $C^1$ on $U_\W$.

\emph{Step 6: Construction of the approximating submanifolds $S_h$.}\\
Let $\pi_\W:\he^n \to \W$ be the  projection $\pi_\W(x,y,t) \ceq (x,y,t)\cdot (-x,0,0)$; in particular, $\pi_\W(U)=\pi_\W(U_\W \cdot U_\V)=U_\W$. Recall that $S \se S'=gr_\phi$, $S'_h=gr_{\phi_h}$ and $\tilde{S}=gr_{\tilde{\phi}}$. The open set $U^+_\W \ceq \pi_\W(U \cap S) \se U_\W$ satisfies
\[
U_\W \cap \pp U^+_\W=\pi_\W(\Sigma).
\]
The submanifolds $S_h$ defined for $h \in \N$ by $S_h \ceq gr_{\phi_h|U^+_\W}$ are relatively open subsets of $S_h'$, hence they are both  $C^1_\he$ regular and  Euclidean  $C^1$ submanifold.

\emph{Step 7: $U \cap  S_h$ is a $C^1$ manifold with boundary and $U \cap \pp S_h=U \cap \pp S$.}\\
For $h \in \N$ and $i=1,2,3$ we define $\Delta_h^i \ceq \pi_\W(\Sigma_{i/h})$; $\Delta_h^i$ are open neighbourhoods of $\pi_\W(\Sigma)$. We observe that $\phi_h \equiv \tilde{\phi}$ on $ \Delta^1_h$: this follows upon noticing that $f_h\equiv g $ on $\Sigma_{1/h}$, hence
\[
\Sigma_{1/h} \cap gr_{\phi_h}=\Sigma_{1/h}\cap S'_h=\Sigma_{1/h} \cap \tilde{S}=\Sigma_{1/h}\cap gr_{\tilde{\phi}}.
\]
This implies that $\phi_h \equiv \tilde{\phi}$ on $ \Delta^1_h$ as well as the fact that the $U\cap S_h$ is a classical $C^1$  submanifold with boundary $U \cap \pp S_h=U \cap \pp S$.

We observe in passing that
\begin{equation}
\label{eq_Delta3}
\text{$\phi_h \equiv \tilde{\phi}_h$ on $U^+_\W \setminus \Delta^3_h$, where $\tilde{\phi_h}\in C^\infty(\W,\V)$ is such that $\lbrace f=v_h \rbrace=gr_{\tilde{\phi_h}}$.}
\end{equation}

\emph{Step 8: $\curr{S_h}(\omega)\xrightarrow{h \to +\infty}\curr{S}(\omega)$ for every $\omega \in \mathcal{D}^{n+1}_\he(U)$.}\\
Let $\Phi_h(w) \ceq w \cdot \phi_h(w), w\in U_\W$, be the graph map associated with the $C^1$ map $\phi_h$. Let $\omega \in \mathcal{D}^{n+1}_\he(U)$ and $k \in \N$ be fixed. Later we will let $k \to +\infty$ after we let $h \to +\infty$ so we can suppose $h>k$. We use Theorem~\ref{fond1} to get
\[
\curr{S_h}(\omega)=\int_{U^+_\W} \langle t^\he_{S_h}(\Phi_h(\xi))|\omega(\Phi_h(\xi))\rangle \frac{|\nabla_\he (f_h)_1\wedge \cdots \wedge \nabla_\he (f_h)_n |}{|\det (\widehat{\nabla}_\he f_h)|}(\Phi_h(\xi))d\xi 
\]
where we used $(f_h)_l$ to denote the $l$-th component of $f_h$.
We split the integral as
\begin{align*}
\curr{S_h}(\omega)=&\int_{U^+_\W \setminus \Delta^3_k} \langle t^\he_{S_h}(\Phi_h(\xi))|\omega(\Phi_h(\xi))\rangle \frac{|\nabla_\he (f_h)_1\wedge \cdots \wedge \nabla_\he (f_h)_n |}{|\det (\widehat{\nabla}_\he f_h)|}(\Phi_h(\xi))d\xi \\
&+\int_{\Delta^3_k} \langle t^\he_{S_h}(\Phi_h(\xi))|\omega(\Phi_h(\xi))\rangle \frac{|\nabla_\he (f_h)_1\wedge \cdots \wedge \nabla_\he (f_h)_n |}{|\det (\widehat{\nabla}_\he f_h)|}(\Phi_h(\xi))d\xi
\end{align*}
We first focus on the second integral. Using the properties in \eqref{eq_propfj} we obtain that the area factor $\frac{|\nabla_\he (f_h)_1\wedge \cdots \wedge \nabla_\he (f_h)_n |}{|\det (\widehat{\nabla}_\he f_h)|}(\Phi_h(\xi))$ is bounded by a positive constant only depending on $\delta_3$ and $C_3$ while $|\langle t^\he_{S_h}(\Phi_h(\xi))|\omega(\Phi_h(\xi))\rangle|$ is bounded on $\Delta^3_k$ by $\|\omega\|_{C^0(\Sigma_{3/k})}<+\infty$. This means that
\[
\int_{\Delta^3_k} \langle t^\he_{S_h}(\Phi_h(\xi))|\omega(\Phi_h(\xi))\rangle \frac{|\nabla_\he (f_h)_1\wedge \cdots \wedge \nabla_\he (f_h)_n |}{|\det (\widehat{\nabla}_\he f_h)|}(\Phi_h(\xi))d\xi=O(\mathcal{L}^{n+1}(\Delta^3_k)).
\]
We now focus on the first integral. On $U \setminus \Sigma_{3/k}$ we recall that $f_h$ is defined (since $\psi_h \equiv 0$ on $U \setminus \Sigma_{3/k}$ because $h>k$) as 
\[
f_h=f-v_h.
\]
It follows that the horizontal derivatives of $f_h$ coincide with the horizontal derivatives of $f$ on $U \setminus \Sigma_{3/k}$. Since $\Phi_h \xrightarrow{h \to +\infty}\Phi$ uniformly on $U^+_\W \setminus \Delta^3_k$ the latter implies that 
\[
\frac{|\nabla_\he (f_h)_1\wedge \cdots \wedge \nabla_\he (f_h)_n |}{|\det (\widehat{\nabla}_\he f_h)|}\circ \Phi_h \xrightarrow{h \to +\infty}
\frac{|\nabla_\he f_1\wedge \cdots \wedge \nabla_\he f_n |}{|\det (\widehat{\nabla}_\he f)|}\circ \Phi \text{ uniformly on } U^+_\W \setminus \Delta^3_k
\]
(where with $f_l$, $1 \leq l \leq n$, we denoted the $l$-th component of $f$) and
\[
\langle t^\he_{S_h}\circ \Phi_h|\omega \circ \Phi_h \rangle \xrightarrow{h \to +\infty} \langle t^\he_{S}\circ \Phi|\omega \circ \Phi\rangle \text{ uniformly on } U^+_\W \setminus \Delta^3_k,
\]
in the same fashion as in the proof of Lemma~\ref{approx}. Then we obtain
\begin{align*}
&\lim_{h \to +\infty}\int_{U^+_\W \setminus \Delta^3_k} \langle t^\he_{S_h}(\Phi_j(\xi))|\omega(\Phi_h(\xi))\rangle \frac{|\nabla_\he (f_h)_1\wedge \cdots \wedge \nabla_\he (f_h)_n |}{|\det (\widehat{\nabla}_\he f_h)|}(\Phi_h(\xi))d\xi  \\
=\:& 
\int_{U^+_\W \setminus \Delta^3_k} \langle t^\he_{S}(\Phi(\xi))|\omega(\Phi(\xi))\rangle \frac{|\nabla_\he f_1\wedge \cdots \wedge \nabla_\he f_n |}{|\det (\widehat{\nabla}_\he f)|}(\Phi(\xi))d\xi \\=\:& \curr{S}(\omega)- \int_{\Delta^3_k} \langle t^\he_{S}(\Phi(\xi))|\omega(\Phi(\xi))\rangle \frac{|\nabla_\he f_1\wedge \cdots \wedge \nabla_\he f_n |}{|\det (\widehat{\nabla}_\he f)|}(\Phi_j(\xi))d\xi.
\end{align*}
As before we can estimate
\[
\int_{\Delta^3_k} \langle t^\he_{S}(\Phi(\xi))|\omega(\Phi(\xi))\rangle \frac{|\nabla_\he f_1\wedge \cdots \wedge \nabla_\he f_n |}{|\det (\widehat{\nabla}_\he f)|}(\Phi_h(\xi))d\xi=O(\mathcal{L}^{n+1}(\Delta^3_k))
\]
so that, for every $k\in\N$,
\[
\limsup_{h \to +\infty}|\curr{S_h}(\omega)-\curr{S}(\omega)| \leq O(\mathcal{L}^{n+1}(\Delta^3_k)).
\]
Letting $k \to +\infty$ and using the fact that $O(\mathcal{L}^{n+1}(\Delta^3_k))\to\mathcal{L}^{n+1}(\pi_\W(\Sigma))=0$ we obtain 
\[
\curr{S_h}(\omega)\xrightarrow{h \to +\infty}  \curr{S}(\omega)
\]
for every $\omega \in \mathcal{D}^{n+1}_\he(U)$.
\end{proof}

\begin{rem}\label{rem_miglioramentiCinfty}
By a standard approximating procedure, the submanifolds $S_h$  of Lemma~\ref{approx2_crit} can be chosen to with the extra property that each $S_h$ (to be understood without its boundary) is actually $C^\infty$ smooth.
\end{rem}

\section{Proof of Stokes' Theorem}
We are  ready to prove our main result. Observe that Theorem~\ref{thm_intro}  is an immediate consequence of the following theorem.

\begin{teo}
\label{fin1}
Let $1\leq k\leq 2n+1$ be an integer, $O \subset \he^n$ be an open set and $S \subset O$ be an orientable $k$-codimensional $C^1_\he$ regular submanifold with boundary\footnote{The careful reader will notice that Theorem~\ref{fin1} holds also when $O\cap\pp S$ is empty.} such  that $S,\pp S$ have locally finite  measures on $O$. Then
\[
\curr{S}(d_c\omega)=\curr{\pp S}(\omega)\qquad\text{for every }\omega \in \D^{2n-k}_\he(O)
\]
or, equivalently, $\pp_c\curr{S}=\curr{\partial S}$.
\end{teo}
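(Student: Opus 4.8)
The plan is to reduce the global identity to a purely local one and then, locally, to approximate $S$ and $\pp S$ by smooth submanifolds on which Stokes' Theorem is essentially classical, passing to the limit through the approximation results already established. First I would localise: since $\curr S$ and $\curr{\pp S}$ are linear, $d_c$ is linear, and $\omega\in\D^{2n-k}_\he(O)$ has compact support, I would cover $\operatorname{supp}\omega$ by finitely many open sets of three types, namely sets contained in $O\setminus\overline S$, neighbourhoods of interior points $p\in O\cap S$, and neighbourhoods of boundary points $p\in O\cap\pp S$. Taking a subordinate partition of unity $\{\zeta_i\}$ with $\sum_i\zeta_i\equiv 1$ on $\operatorname{supp}\omega$, and noting that each $\zeta_i\omega$ is again a Heisenberg form with compact support in the corresponding set, it suffices to prove $\curr S(d_c(\zeta_i\omega))=\curr{\pp S}(\zeta_i\omega)$ for every $i$; summing and using $\sum_i d_c(\zeta_i\omega)=d_c\omega$ recovers the claim. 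On sets disjoint from $\overline S$ both sides vanish. Near an interior point I would invoke Lemma~\ref{approx} to obtain smooth $C^1_\he$ submanifolds $S_h\to S$ having no boundary in the chosen neighbourhood; then $\curr{S_h}(d_c\omega)\to\curr S(d_c\omega)$, while classical Stokes on the boundaryless $S_h$ gives $\curr{S_h}(d_c\omega)=0$, matching $\curr{\pp S}(\zeta_i\omega)=0$.

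The reusable fact for the boundary neighbourhoods is that whenever a submanifold $S_h$ is simultaneously Euclidean $C^1$ (indeed $C^\infty$) and $C^1_\he$, Lemma~\ref{eqcurr} identifies $\curr{S_h}$ with the classical Euclidean current, so $\curr{S_h}(\eta)=\int_{S_h}\tilde\eta$ for any classical representative $\tilde\eta$ of a Heisenberg form $\eta$. In the low-dimensional case $k\ge n+1$, where Lemma~\ref{lem_lowdimsubwbound} already exhibits $S$ as a Euclidean $C^1$ submanifold with boundary tangent to $H\he^n$, and in the non-critical low-codimensional case $1\le k\le n-1$, one has $d_c=d$ and both $\omega$ and $d_c\omega$ are represented by genuine forms. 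Choosing a classical representative $\tilde\omega$ of $\omega$, so that $d\tilde\omega$ represents $d_c\omega$, the identity for each $S_h$ reduces to the classical Stokes' Theorem $\int_{S_h}d\tilde\omega=\int_{\pp S_h}\tilde\omega$. For $k\ge n+1$ no approximation is needed and classical Stokes applies to $S$ itself; for $1\le k\le n-1$ I would pass to the limit using Lemma~\ref{approx2}, where \eqref{eq_AAA} yields $\curr{S_h}(d_c\omega)\to\curr S(d_c\omega)$ and \eqref{eq_BBB} yields $\curr{\pp S_h}(\omega)\to\curr{\pp S}(\omega)$.

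The hard part is the critical codimension $k=n$, where $\omega$ has degree $n$, $d_c=D$ is Rumin's second-order operator, and $\pp S$ is horizontal of dimension $n$. Here I would use Lemma~\ref{approx2_crit}, which produces smooth $C^1_\he$ submanifolds $S_h\to S$ with the decisive property $\pp S_h=\pp S$, so that only the left-hand side requires a limit (through \eqref{eq_AAA_crit}). For each $S_h$ I would exploit the explicit formula of Remark~\ref{rem_defDRumin}: writing $\omega$ through a horizontal representative $\alpha$ (a smooth section of $\bwl^n\hel_1$), the classical form $\tilde\omega\ceq\alpha-\theta\wedge L^{-1}((d\alpha)_{\hel_1})$ represents $\omega$ and satisfies $D\omega=d\tilde\omega$. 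Then $\curr{S_h}(D\omega)=\int_{S_h}d\tilde\omega=\int_{\pp S_h}\tilde\omega$ by classical Stokes; since $\tilde\omega-\alpha$ is a multiple of $\theta$ and $T\pp S_h=T\pp S\subset H\he^n=\ker\theta$, the integral over $\pp S_h$ is unchanged upon replacing $\tilde\omega$ by $\alpha$, and by Proposition~\ref{prop_integrazionebassa} it equals $\curr{\pp S_h}(\omega)=\curr{\pp S}(\omega)$. Letting $h\to+\infty$ closes the argument.

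The one genuinely delicate point running through all cases is the bookkeeping of orientations: I must verify that the induced boundary orientation fixed in Subsection~\ref{sub_boundor} — via the outward normal $\nu_{\pp S}$ in low codimension, and via $\pm T$ according to whether $S$ lies above or below its boundary in the critical case — coincides with the classical induced orientation on each smooth approximant $S_h$. This is what guarantees that the signs in classical Stokes agree with those carried by $\curr{S_h}$ and $\curr{\pp S_h}$, and it is the aspect I expect to require the most care, precisely because the tangent $(2n+1-k)$-vector $t^\he_{S_h}$ and its boundary counterpart are defined through the Hodge dual of the horizontal normal rather than directly through a Euclidean orientation.
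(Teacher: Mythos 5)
Your proposal is correct and follows essentially the same route as the paper's proof: localisation via a partition of unity, Lemma~\ref{approx} near interior points, classical Stokes plus Lemma~\ref{approx2} (via \eqref{eq_AAA} and \eqref{eq_BBB}) in the non-critical low-codimensional case, direct use of classical Stokes in the low-dimensional case, and, for $k=n$, Lemma~\ref{approx2_crit} with $\pp S_h=\pp S$ combined with the formula $D\omega=d\bigl(\alpha-\theta\wedge L^{-1}((d\alpha)_{\hel_1})\bigr)$ of Remark~\ref{rem_defDRumin} and the horizontality of $T\pp S$ to discard the $\theta$-term on the boundary. The only cosmetic differences are that the paper covers $O\cap\overline S$ rather than $\operatorname{supp}\omega$ and runs interior and boundary neighbourhoods through one unified limiting identity, with the orientation bookkeeping you flag being absorbed into the approximation lemmas and the conventions of Subsection~\ref{sub_boundor}.
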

\begin{proof}
The low-dimensional case, i.e., when  $k\geq n+1$, is an easy consequence of the classical Stokes' Theorem;  we then consider the low-codimensional case and assume hereafter that $k\leq n$.
We first define a suitable open covering of $O \cap \overline{S}$ as follows. For each $p  \in  S$ we use Lemma \ref{approx} to find an open neighbourhood $U^1_p\subset O$ and a sequence $(S_h)_{h \in \mathbb{N}}$ of smooth and $C^1_\he$ submanifolds of codimension $k$ contained in $U^1_p$  such that
\[
\curr{S_h}(\omega) \xrightarrow{h \to +\infty} \curr{S}(\omega)\qquad\text{for every $\omega \in \mathcal{D}^{2n+1-k}_\he(U_p^1)$.}
\] 
For each $p \in O \cap \pp S$ we can use Lemma~\ref{approx2} (if $k\leq n-1$) or Lemma~\ref{approx2_crit} (if $k=n$) to find an open neighbourhood $U^2_p$ of $p$ and a sequence $(S_h)_{h \in \mathbb{N}}$ of $C^1$ and $C^1_\he$ $k$-codimensional submanifolds with ($C^1$) boundary in $U^2_p$  such that
\begin{align*}
& \curr{S_h}(\omega) \xrightarrow{h \to +\infty} \curr{S}(\omega)\qquad\text{for every $\omega \in \mathcal{D}^{2n+1-k}_\he(U_p^2)$}\\
& \curr{\pp S_h}(\alpha) \xrightarrow{h \to +\infty} \curr{\pp S}(\alpha)\qquad\text{for every $\alpha \in \mathcal{D}^{2n-k}_\he(U_p^2)$}.
\end{align*}
We extract from the families $(U_p^1)_{p \in S}$ and $(U_p^2)_{p \in O\cap \pp S}$ countable (or, possibly, finite) sub-families  $(U^1_i)_{i \in \mathbb{N}}$ and  $(U^2_i)_{i \in \mathbb{N}}$ such that $O \cap \overline{S} \subset \bigcup_{i \in \mathbb{N}}U_i^1 \cup \bigcup_{i \in \mathbb{N}}U_i^2$. We also fix a partition of the unity, i.e., functions $\zeta_i^j\in C^\infty_c(U_i^j), i\in\N, j\in\{1,2\},$ such that 
\begin{equation}\label{eq_partizionedellunita}
0\leq\zeta_i^j\leq 1\qquad\text{and}\qquad\sum_{\substack{i \in \mathbb{N} \\ j=1,2}}\zeta_i^j=1\text{ on $\overline S$.}
\end{equation}
It is not restrictive to assume that the covering $(U_i^1)_i\cup (U_i^2)$ of $O\cap\overline S$ is locally finite, so that the sum in~\eqref{eq_partizionedellunita} is well defined.

Let $\omega \in \mathcal{D}^{2n-k}_\he(O)$ be fixed; for every $i \in \mathbb{N}$ and  $j=1,2$ denote by $(S_h)_h$  the $C^1$ submanifolds approximating  $S$ (in the sense of currents) in $U^j_i$. Using Lemma \ref{eqcurr}  we get
\begin{equation}\label{eq_elenco}
\begin{split}
\curr{S}(d_c(\zeta^j_i \omega)) 
&= \lim_{h\to\infty} \curr{S_h}(d_c(\zeta^j_i \omega))
 = \lim_{h\to\infty}  \int_{S_h}d_c(\zeta^j_i \omega)\\
&\stackrel{*}{=}\lim_{h\to\infty}  \int_{\pp S_h} \zeta^j_i \omega
 = \lim_{h\to\infty}\curr{\partial S_h}(\zeta^j_i \omega)\  =\ \curr{\partial S}(\zeta^j_i \omega),
\end{split}
\end{equation}
where the equality marked by $\ast$ follows from the classical Stokes' Theorem if $k\leq n-1$ (when $d_c=d$ is the classical exterior differentiation) while, if $k=n$, it follows from Stokes' Theorem and Remark~\ref{rem_defDRumin}  upon observing that
\begin{align*}
\int_{S_h}D(\zeta^j_i \omega)
&= \int_{S_h}d(\zeta^j_i \omega-\theta \wedge L^{-1}((d(\zeta^j_i \omega))_{\hel_1}))\\
&=\int_{\pp S_h}\zeta^j_i \omega-\theta \wedge L^{-1}((d(\zeta^j_i \omega))_{\hel_1}) =\int_{\pp S}\zeta^j_i\omega,
\end{align*}
the last equality following from the fact that $\pp S_h$ equals $\pp S$ and it is tangent to the horizontal distribution. Finally, 
\begin{align*}
\pp_c \curr{S}(\omega)&=\sum_{{\substack{i \in \mathbb{N} \\ j=1,2}}}\pp_c \curr{S}(\zeta^j_i \omega)=\sum_{{\substack{i \in \mathbb{N} \\ j=1,2}}} \curr{S}(d_c(\zeta^j_i \omega))=\sum_{{\substack{i \in \mathbb{N} \\ j=1,2}}}\curr{\partial S}(\zeta^j_i \omega)=\curr{\pp S}(\omega)
\end{align*}
and the proof is accomplished.
\end{proof}

\bibliographystyle{acm}
\bibliography{DMJNGVbib}

\end{document}